\definecolor{red}{rgb}{1,0,0}
\definecolor{magenta}{rgb}{1,0,1}
\definecolor{dartmouthgreen}{rgb}{0.05, 0.5, 0.06}
\definecolor{purple(x11)}{rgb}{0.63,0.36,0.94}
\definecolor{turquoise}{rgb}{0.25, 0.87, 0.81}
\newtheorem{theorem}{Theorem}[section]
\newtheorem{lemma}[theorem]{Lemma}
\newtheorem{proposition}[theorem]{Proposition}
\newtheorem{corollary}[theorem]{Corollary}
\newtheorem{conjecture}[theorem]{Conjecture}
\theoremstyle{definition}
\newtheorem{remark}[theorem]{Remark}
\newtheorem{example}[theorem]{Example}
\newcommand{\Cal}[1]{\ensuremath{\mathcal{#1}}}
\newcommand{\lp}{\left(}
\newcommand{\rp}{\right)}
\def\C{{\mathbb C}}
\def\R{{\mathbb R}}
\def\N{{\mathbb N}}
\def\Z{{\mathbb Z}}
\def\Q{{\mathbb Q}}
\def\O_K{{\Cal{O}_{K}}}
\def\O_F{{\Cal{O}_{F}}}
\def\N_F{{\Cal{N}_{F/\Q}}}
\def\CC{{\mathcal C}}
\def\O_K{{\Cal{O}_{K}}}
\def\O_F{{\Cal{O}_{F}}}
\def\N_F{{\Cal{N}_{F/\Q}}}
\numberwithin{equation}{section}
\numberwithin{theorem}{section}
 \title{Hook length biases and general linear partition inequalities}
 \author{Cristina Ballantine}
 \address{Department of Mathematics and Computer Science, College of the Holy Cross, Worcester, MA 01610, USA}
  \email{cballant@holycross.edu}
 \author{Hannah Burson}
 \address{School of Mathematics, University of Minnesota, Twin Cities, 127 Vincent Hall 206 Church St. SE, 
Minneapolis, MN 55455, USA}
\email{hburson@umn.edu}
\author{William Craig}
\address{Mathematical Institute,
University of Cologne,
Gyrhofstr. 8b,
50931 Cologne, Germany}
\email{wcraig@uni-koeln.de}
 \author{Amanda Folsom}
 \address{Department of Mathematics and Statistics, Amherst College, Amherst, MA 01002, USA}
 \email{afolsom@amherst.edu}
 \author{Boya Wen}
 \address{Department of Mathematics, University of Wisconsin, 480 Lincoln Drive, Madison, WI 53706, USA}
\email{bwen25@wisc.edu}
\thanks{Acknowledgements.  Craig was partially supported by the Thomas Jefferson Fund at the University of Virginia and the NSF (DMS-1601306 and DMS-2055118) and received funding from the European Research Council (ERC)
under the European Union’s Horizon 2020 research and innovation programme (grant agreement
No. 101001179).  Folsom was partially supported by NSF Grant DMS-2200728. Wen was partially supported by NSF Grant DMS-1928930 while she was in residence at the Simons Laufer Mathematical Sciences Institute (formerly MSRI) in Berkeley, California, during the Spring 2023 semester. The authors also acknowledge support from the Shanahan Fund at the College of the Holy Cross. }
 \keywords{hook length, distinct partitions, odd partitions, partition asymptotics, partition inequalities}
 \subjclass[2020]{11P82, 05A17, 05A15, 05A19, 11P83}
\begin{document} 
 \maketitle

\begin{abstract} Motivated in part by hook-content formulas for certain restricted partitions in representation theory, we consider the total number of hooks of fixed length in  odd versus  distinct partitions. We show that there are more hooks of length $2$, respectively $3$, in all odd partitions of $n$ than in all distinct partitions of $n$,  and make the analogous conjecture for arbitrary hook length $t \geq 2$. We also establish additional bias results on the number of gaps of size $1,$ respectively $2$, in all odd versus distinct partitions of $n$. We conjecture similar biases and asymptotics, as well as congruences for the number of hooks of fixed length in odd distinct partitions versus self-conjugate partitions. 

An integral component of the proof of our bias result for hooks of length $3$ is a linear inequality involving $q(n)$, the number of distinct partitions of $n$.  In this article  we also establish effective linear inequalities  for $q(n)$ in great generality, a result  which is  of independent interest.  

Our methods are both analytic and combinatorial, and our results and conjectures intersect the areas of representation theory, analytic number theory,  partition theory, and $q$-series. In particular, we use a Rademacher-type exact formula for $q(n),$   Wright's circle method, modularity,  $q$-series transformations, asymptotic methods, and combinatorial arguments.
\end{abstract}

 \section{Introduction} \label{intro}
  Connections between representation theory and the theory of integer partitions are well-known.  For example, 
the irreducible  polynomial representations of $\operatorname{GL}_n(\mathbb C)$ may be indexed by partitions of length at most $n$;  moreover, the conjugacy classes of the symmetric group $S_n$, and therefore the number of non-equivalent irreducible complex representations, may be indexed by the partitions of $n$.  \emph{Hook lengths} of partitions play particularly important roles in establishing these connections.  Namely, such irreducible representations   can be analyzed via partition \emph{Young tableaux}.  The dimension of  a representation of $S_n$ (respectively $GL_n(\mathbb C)$)  corresponding to a particular partition is given by a   \emph{hook length formula} (respectively a \emph{hook-content formula}).  
For more on these topics, see e.g. \cite{stanley}.  

 We recall that a \emph{partition}   $\lambda=(\lambda_1, \lambda_2, \ldots, \lambda_\ell)$ of \emph{size} $n\in\mathbb N_0$  is a non-increasing sequence of positive integers $\lambda_1\geq \lambda_2 \geq \cdots \geq \lambda_\ell$ called \emph{parts} that add up to $n$. The number of parts of a partition $\lambda$ is called the \emph{length} of $\lambda$ and is denoted by $\ell(\lambda)$. We denote by $\mathcal P$ the set of all partitions, and by $p(n)$ the number of partitions of $n$. We adopt the usual convention that the empty set is the only partition of zero.

A partition $\lambda=(\lambda_1, \lambda_2, \ldots, \lambda_\ell)$ has a natural graphical representation as a \emph{Young diagram} (also called a \emph{Ferrers diagram}), i.e. a left-justified vertical array of boxes with $\lambda_i$ boxes in the $i$-th row from the top. The \emph{conjugate} of a partition $\lambda$ is the partition $\lambda'$ whose Young diagram has the columns of $\lambda$ as rows. 
   Each box in a Young diagram of $\lambda$ may be labeled with a \emph{hook number}, also called \emph{hook length}, which, informally, is the number of boxes in the upside-down-L-shaped portion of  the diagram with  the box appearing as its corner.  More  precisely,  
   for a box   in the $i$-th row and $j$-th column of the Young diagram of a partition $\lambda$, its  hook length  is defined as $h(i,j)=\lambda_i+\lambda'_j-i-j+1$ (see Figure \ref{fig1}).   We denote by $\mathcal H(\lambda)$ the multi-set of all hook lengths of $\lambda$, and let $$\mathcal H_t(\lambda) := \{h \ | \ h \in \mathcal H(\lambda), \ h\equiv 0 \pmod{t}\}.$$ 
   If   $\mathcal H_t(\lambda) = \emptyset,$ then $\lambda$ is called a \emph{$t$-core partition}.

In their study of Seiberg-Witten theory, Nekrasov and Okounkov \cite{NO} discovered the now 
  celebrated   formula for arbitrary powers of Euler's infinite product in terms of hook numbers.
\begin{theorem}[Nekrasov-Okounkov] For any complex number $z$ we have
\begin{align}\label{eqn_NO} \sum_{\lambda \in \mathcal P} x^{|\lambda|}\prod_{h \in \mathcal H(\lambda)} \left(1-\frac{z}{h^2}\right) = \prod_{k=1}^\infty (1-x^k)^{z-1}.\end{align}
\end{theorem}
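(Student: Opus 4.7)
The plan is to prove \eqref{eqn_NO} by a polynomiality-in-$z$ argument that reduces the identity to a classical Macdonald-type evaluation on $t$-cores. First I would observe that both sides are formal power series in $x$ whose coefficient of $x^n$ is a polynomial in $z$ of degree at most $n$: on the left, each summand $\prod_{h \in \mathcal H(\lambda)}(1 - z/h^2)$ with $\lambda \vdash n$ is a polynomial of degree exactly $n$, while on the right the expansion $\prod_{k \geq 1}(1-x^k)^{z-1} = \exp((z-1) \sum_{k \geq 1} \log(1 - x^k))$ shows the same degree bound. It therefore suffices to verify \eqref{eqn_NO} at infinitely many values of $z$, and I would target $z = t^2$ for every positive integer $t \geq 1$.

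At $z = t^2$, the crucial combinatorial input is the classical fact that a partition $\lambda$ has a hook of length \emph{exactly} $t$ if and only if $\mathcal H_t(\lambda) \neq \emptyset$, equivalently iff $\lambda$ is not a $t$-core. Consequently the factor $1 - t^2/t^2 = 0$ annihilates every non-$t$-core contribution to the left-hand side, and \eqref{eqn_NO} at $z = t^2$ collapses to
\begin{align*}
\sum_{\mu \text{ a } t\text{-core}} x^{|\mu|} \prod_{h \in \mathcal H(\mu)}\left(1 - \frac{t^2}{h^2}\right) \;=\; \prod_{k=1}^\infty (1 - x^k)^{t^2-1}.
\end{align*}
The problem thus reduces to establishing this $t$-core identity for every positive integer $t$.

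The main obstacle is that this $t$-core identity is a partition-theoretic reformulation of the Macdonald identity for the affine root system $\widetilde{A}_{t-1}$, whose proof is nontrivial. To establish it I would pass to the \emph{abacus} (or bead) model of $t$-cores, which yields a bijection between $t$-cores $\mu$ and vectors $(n_0, \ldots, n_{t-1}) \in \Z^t$ with $\sum_i n_i = 0$ (i.e.\ points of the $A_{t-1}$ root lattice). Under this correspondence the size $|\mu|$ becomes an explicit positive-definite quadratic form in the $n_i$, and the hook-weighted product $\prod_{h \in \mathcal H(\mu)}(1 - t^2/h^2)$ reorganizes into a product of linear factors in the $n_i$. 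The $t$-core sum then assumes the shape of a theta-like lattice series, which the Weyl denominator formula for $\widetilde{A}_{t-1}$ evaluates to the desired infinite product. As an alternative route I would keep in mind Han's direct combinatorial proof, which handles $t$-cores and $t$-quotients simultaneously via the natural refinement $\sum_\lambda x^{|\lambda|}\prod_{h \in \mathcal H_t(\lambda)}(1 - z/h^2)$, but ultimately relies on the same Macdonald-type evaluation, which is the essential technical ingredient.
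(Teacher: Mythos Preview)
The paper does not prove this theorem: it is quoted as background, attributed to Nekrasov and Okounkov, and used only to motivate the study of hook lengths in restricted partitions. There is therefore no ``paper's own proof'' to compare against.

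That said, your outline is a correct sketch of the now-standard argument (essentially the route taken by Han and, independently, Westbury): both sides have coefficients that are polynomials in $z$ of bounded degree, so it suffices to check the identity at $z = t^2$ for all positive integers $t$; the factor $1 - t^2/h^2$ vanishes whenever $h = t$, and the classical fact that $\lambda$ has a hook of length exactly $t$ if and only if it has any hook divisible by $t$ collapses the left side to a sum over $t$-cores; the abacus bijection then rewrites this as a lattice sum that the Macdonald identity for $\widetilde{A}_{t-1}$ evaluates to $\prod_k (1-x^k)^{t^2-1}$. Each of these steps is genuine and you have identified the real technical input (the Macdonald identity), so as a proof plan this is sound---just be aware that the last step is where essentially all the work lies, and a complete write-up would need either to invoke the Macdonald identity as a black box or to reproduce one of its proofs.
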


Using properties of a classical combinatorial bijection between partitions and $t$-cores and $t$-quotients, Han  \cite{han}   established an  extension of the Nekrasov-Okounkov formula \eqref{eqn_NO},  which unifies the Macdonald identities in representation theory and the generating function for  {$t$-core partitions}. 
\begin{theorem}[Han] Let $t$ be a positive integer.  For any complex numbers $y$ and $z$ we have
\begin{align}\label{eqn_Han2} \sum_{\lambda \in \mathcal P} x^{|\lambda|} \prod_{h \in \mathcal H_t(\lambda)} \left(y-\frac{tyz}{h^2}\right) = \prod_{k=1}^\infty \frac{(1-x^{tk})^t}{(1-(yx^t)^k)^{t-z}(1-x^k)}.\end{align}
\end{theorem}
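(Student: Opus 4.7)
The plan is to reduce Han's identity \eqref{eqn_Han2} to the Nekrasov-Okounkov formula \eqref{eqn_NO} by means of the classical $t$-core/$t$-quotient bijection, together with the well-known product formula for the generating function of $t$-core partitions. Recall that this bijection assigns to each $\lambda \in \mathcal P$ a unique pair $(\mu, (\nu^{(0)}, \ldots, \nu^{(t-1)}))$, where $\mu$ is the $t$-core of $\lambda$ and the $t$-quotient $(\nu^{(0)}, \ldots, \nu^{(t-1)})$ is an arbitrary $t$-tuple of ordinary partitions. Two features of the bijection are essential for the argument: the size identity $|\lambda| = |\mu| + t\sum_{i=0}^{t-1} |\nu^{(i)}|$, and the multiset identity $\mathcal H_t(\lambda) = \bigsqcup_{i=0}^{t-1} \{t h : h \in \mathcal H(\nu^{(i)})\}$ identifying the hooks of $\lambda$ divisible by $t$ with $t$ times the full hook multisets of each quotient.

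Substituting these into the left-hand side of \eqref{eqn_Han2}, each factor $y - tyz/(th)^2 = y(1 - (z/t)/h^2)$ is indexed by a cell of some $\nu^{(i)}$, so collecting powers of $y$ and $x$ via $|\mathcal H_t(\lambda)| = \sum_i |\nu^{(i)}|$ yields
\begin{align*}
\sum_{\lambda \in \mathcal P} x^{|\lambda|} \prod_{h \in \mathcal H_t(\lambda)} \left(y - \frac{tyz}{h^2}\right) = \left(\sum_{\mu\ t\text{-core}} x^{|\mu|}\right) \prod_{i=0}^{t-1} \sum_{\nu \in \mathcal P} (yx^t)^{|\nu|} \prod_{h \in \mathcal H(\nu)}\left(1 - \frac{z/t}{h^2}\right).
\end{align*}
Next, I would apply the Nekrasov-Okounkov identity \eqref{eqn_NO} to each of the $t$ inner sums with the substitutions $x \mapsto yx^t$ and $z \mapsto z/t$, obtaining $\prod_{k\ge 1}(1-(yx^t)^k)^{z/t - 1}$ for each factor, so that the product over $i$ collapses to $\prod_{k\ge 1}(1-(yx^t)^k)^{z - t}$. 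Finally, the classical $t$-core generating function $\sum_{\mu\ t\text{-core}} x^{|\mu|} = \prod_{k\ge 1}(1-x^{tk})^t/(1-x^k)$, which can itself be deduced from the Jacobi triple product and the abacus encoding of partitions, combines with this to produce precisely the right-hand side of \eqref{eqn_Han2}.

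The principal obstacle is combinatorial rather than analytic: the full compatibility of the $t$-core/$t$-quotient bijection with hook lengths divisible by $t$, culminating in the multiset equality $\mathcal H_t(\lambda) = \bigsqcup_i \{th : h \in \mathcal H(\nu^{(i)})\}$, is the nontrivial crux on which the whole reduction rests. Once this correspondence has been verified (most efficiently through beta-sets and the abacus description of partitions, where $t$-ribbon removals on $\lambda$ correspond to the usual box removals on the $\nu^{(i)}$), the remaining steps are merely a rearrangement of the summation followed by two applications of known identities, so that the theorem follows.
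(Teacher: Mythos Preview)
Your argument is correct and is precisely the approach the paper attributes to Han: the paper does not prove this theorem but cites \cite{han}, noting only that Han establishes it ``using properties of a classical combinatorial bijection between partitions and $t$-cores and $t$-quotients.'' Your reduction via the size identity $|\lambda| = |\mu| + t\sum_i |\nu^{(i)}|$ and the hook multiset identity $\mathcal H_t(\lambda) = \bigsqcup_i \{th : h \in \mathcal H(\nu^{(i)})\}$, followed by an application of the Nekrasov--Okounkov formula to each quotient and the $t$-core generating function, is exactly that route, and the algebra checks out.
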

   
In a similar manner, Han \cite{han} also obtained a two-variable generating function for the number of partitions of $n$ with $m$ hooks of length $t$.

\begin{theorem}[Han] Let $t$ be a positive integer. For any complex number $y$ we have \begin{align} \label{eqn_Han1} \sum_{\lambda \in \mathcal P} x^{|\lambda|}y^{\#\{h\in \mathcal H(\lambda), \ h=t\}} = \prod_{k=1}^\infty \frac{(1+(y-1)x^{tk})^t}{1-x^k}.\end{align}
\end{theorem}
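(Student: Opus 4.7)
The plan is to reduce to a one-variable identity via the classical $t$-core / $t$-quotient bijection, which associates to each partition $\lambda$ a pair $(\mu, (\lambda^{(0)}, \ldots, \lambda^{(t-1)}))$ where $\mu$ is a $t$-core and $(\lambda^{(0)}, \ldots, \lambda^{(t-1)}) \in \mathcal P^t$ is the $t$-quotient, satisfying $|\lambda| = |\mu| + t \sum_{i=0}^{t-1} |\lambda^{(i)}|$. The crucial structural input I would invoke is that this bijection sends the multiset of hooks of $\lambda$ of length divisible by $t$ onto $t$ times the union of the hook multisets of the $\lambda^{(i)}$. In particular, hooks of $\lambda$ of length exactly $t$ correspond to hooks of the $\lambda^{(i)}$ of length exactly $1$, so the statistic $\#\{h\in\mathcal H(\lambda):h=t\}$ factors cleanly across the $t$-quotient.

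Next I would handle the case $t=1$ directly, since hooks of length $1$ are precisely the removable corners, whose number equals the number of distinct parts $d(\lambda)$. Writing partitions in multiplicity form $\lambda = 1^{m_1}2^{m_2}\cdots$ and summing a geometric series in each factor gives
\begin{equation*}
\sum_{\lambda \in \mathcal P} x^{|\lambda|} y^{d(\lambda)} = \prod_{k \geq 1}\left(1 + y \cdot \frac{x^k}{1-x^k}\right) = \prod_{k \geq 1}\frac{1 + (y-1)x^k}{1-x^k},
\end{equation*}
which is \eqref{eqn_Han1} at $t=1$.

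Combining these two inputs for general $t \geq 2$, the generating function factors as the product of the $t$-core generating function $\prod_{k \geq 1}(1 - x^{tk})^t/(1-x^k)$ with $t$ independent copies of the $t=1$ identity above with $x$ replaced by $x^t$. The $t$ factors of $(1-x^{tk})^t$ arising in the $t$-quotient contributions cancel against the numerator of the $t$-core generating function, leaving
\begin{equation*}
\prod_{k \geq 1}\frac{1}{1-x^k}\cdot \prod_{k \geq 1}\bigl(1 + (y-1)x^{tk}\bigr)^t,
\end{equation*}
which is exactly the right-hand side of \eqref{eqn_Han1}.

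The main obstacle is verifying the structural correspondence between hooks of $\lambda$ of length divisible by $t$ and hooks of its $t$-quotient, which I view as the only nontrivial ingredient. The cleanest route is via James' abacus: encoding $\lambda$ by its $\beta$-sequence and distributing the beads onto $t$ runners simultaneously exhibits the $t$-core (the sorted runner configuration) and the $t$-quotient (the partitions read off each individual runner), while removal of a rim hook of length $tk$ in $\lambda$ translates to a single bead sliding $k$ positions leftward on its runner, i.e.\ removal of a hook of length $k$ in the corresponding $\lambda^{(i)}$. Once this length-preserving (modulo the factor $t$) correspondence is in hand, the rest of the proof is a routine manipulation of infinite products, and specializing to hooks of length exactly $t$ yields the claimed identity.
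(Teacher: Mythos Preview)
Your argument is correct: the $t$-core/$t$-quotient bijection via James' abacus does yield a length-preserving (up to the factor $t$) correspondence between hooks of $\lambda$ of length divisible by $t$ and hooks of its $t$-quotient, and the subsequent product manipulations are accurate. The $t=1$ base case and the cancellation of the $(1-x^{tk})^t$ factors are both handled cleanly.

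Regarding comparison with the paper: the paper does not actually prove this theorem. It is stated in the introduction as a result of Han and attributed to \cite{han}; the surrounding text (``In a similar manner, Han \cite{han} also obtained\ldots'') indicates that Han's original argument also proceeds via the $t$-core/$t$-quotient bijection, which is precisely the route you take. So your proposal aligns with the method the paper ascribes to the original source, even though no proof is reproduced here.
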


\emph{Restricted} partitions also play important roles in this context.  For example, there is a relationship between irreducible spin representations of the symmetric group and  distinct partitions, with tableaux and hooks acting as  liaisons (see e.g. \cite{matsumoto, schur} for more). Han and Xiong have also established hook-content formulas for distinct partitions in  \cite{hx1, hx2}. Here, we compare certain hook numbers of distinct partitions to those of odd partitions.

  In what follows, we refer to a partition into odd parts as an odd partition and to a partition into distinct parts as a distinct partition.    We denote by $\mathcal O(n)$, respectively $\mathcal D(n)$, the set of odd, respectively distinct, partitions of $n$. Euler's identity \cite[Corollary 1.2]{And} states that $|\mathcal O(n)|=|\mathcal D(n)|$ for all $n\geq 0$. In this paper
  we establish results analogous to Han's generating function \eqref{eqn_Han2} for partitions in $\mathcal O(n),$ respectively $\mathcal D(n),$   for $t=2$ in \eqref{eqn_F2zq} and \eqref{eqn_G2zq}.  We do the same for $t=3$ in Section \ref{sec_t=3}, however our expressions for these generating functions are more complicated and are not manifestly positive.  We leave it as an open problem to find simpler, manifestly positive generating functions for $t=3$, and more generally for $t\geq 3$.   
  We use these results to study the total number of hooks of fixed length in all partitions in $\mathcal O(n)$, respectively $\mathcal D(n)$, as explained below.
    
 Let $a_t(n)$ (respectively\ $b_t(n)$) be the total number of hooks of length $t$ in all odd  (respectively\ distinct) partitions of $n$. 
 \begin{example} We compute $b_2(7)=6$ and $b_3(7)=6$. The partitions of $7$ into distinct parts are $(7), (6,1), (5,2), (4,3)$, and $(4,2,1)$.  We give their  Young diagrams (in that order) with hook lengths labeled.
\begin{figure}[H] \centering $${\small \begin{ytableau} 7&6&5&4&3&2&1 & \none&\none& 7&5&4&3&2&1 & \none&\none& 6&5&3&2&1 \\ \none&\none&\none&\none&\none&\none&\none&\none&\none & 1 &\none&\none&\none&\none&\none&\none&\none& 2&1 \\ \none \end{ytableau}}$$
$${\small \begin{ytableau} 5&4&3&1 &\none&\none&\none&\none& 6&4&2&1 \\ 3&2&1 &\none&\none&\none&\none&\none& 3&1 \\ \none&\none&\none&\none&\none&\none&\none&\none& 1 \end{ytableau}}$$
\caption{The distinct partitions of $n=7$ and their hook lengths.}
\label{fig1}
\end{figure} 
 \end{example}

For a partition $\lambda$, a box in its Young diagram  has hook length $1$ if and only if it is at the end of a row and there is no box directly below it. Thus, in a partition $\lambda$, the number of hooks of length $1$ equals the number of different part sizes in $\lambda$. 

The next result was conjectured by Beck \cite{OEISBeckConj} and proved analytically by Andrews \cite{AndrewsBeck}. 
\begin{theorem}[Andrews] \label{AB} The difference between the  total number of parts in all distinct partitions of $n$ and the total number of different part sizes in all odd partitions of $n$ equals $c(n)$, the number of partitions of $n$ with exactly one part occurring three times while all other parts occur only once. 
\end{theorem}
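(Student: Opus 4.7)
The plan is to give an analytic, generating-function proof. Let $D(n)$ denote the total number of parts in all distinct partitions of $n$ and $O(n)$ the total number of different part sizes in all odd partitions of $n$. I would first set up the generating functions for the three sequences $D(n)$, $O(n)$, and $c(n)$ by a standard marking argument. For $D(n)$, summing over which part $k$ of a distinct partition is being counted yields
\[
\sum_{n\geq 0} D(n) q^n \;=\; \prod_{j\geq 1}(1+q^j)\,\sum_{k\geq 1}\frac{q^k}{1+q^k}.
\]
For $O(n)$, summing over which odd part size $2k-1$ appears at least once (i.e.\ replacing the factor $1/(1-q^{2k-1})$ by $q^{2k-1}/(1-q^{2k-1})$) gives
\[
\sum_{n\geq 0} O(n) q^n \;=\; \prod_{j\geq 1}\frac{1}{1-q^{2j-1}}\,\sum_{k\geq 1} q^{2k-1}.
\]
For $c(n)$, singling out the thrice-repeated part $k$ (with the remaining parts distinct and avoiding $k$) produces
\[
\sum_{n\geq 0} c(n) q^n \;=\; \prod_{j\geq 1}(1+q^j)\,\sum_{k\geq 1}\frac{q^{3k}}{1+q^k}.
\]

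Next I would invoke Euler's identity $\prod_{j\geq 1}(1+q^j) = \prod_{j\geq 1}(1-q^{2j-1})^{-1}$ to give both the $D$ and $O$ generating functions the common product prefactor $\prod_j(1+q^j)$. After factoring this product out of the claimed equality $D(n) - O(n) = c(n)$, the statement collapses to the $q$-series identity
\[
\sum_{k\geq 1}\frac{q^k - q^{3k}}{1+q^k} \;=\; \sum_{k\geq 1} q^{2k-1}.
\]

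Finally I would verify this identity by direct algebra: since $q^k - q^{3k} = q^k(1-q^k)(1+q^k)$, each summand on the left simplifies to $q^k - q^{2k}$, and the sum telescopes to $\tfrac{q}{1-q} - \tfrac{q^2}{1-q^2} = \tfrac{q}{1-q^2}$, which equals $\sum_{k\geq 1} q^{2k-1}$. The only step requiring any thought is the correct marking set-up for the three generating functions; once it is in place, Euler's identity and the explicit cancellation finish the argument, so I do not anticipate a serious obstacle. This is consistent with the fact that Andrews's original analytic proof is short.
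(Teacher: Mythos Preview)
The paper does not actually prove this theorem; it is stated as a known result of Andrews, with a citation to \cite{AndrewsBeck}, and is used only to derive Corollary~\ref{cor_andt1}. Your argument is correct and complete: the three generating functions are set up correctly by the standard marking argument, Euler's identity $\prod_{j\geq 1}(1+q^j)=\prod_{j\geq 1}(1-q^{2j-1})^{-1}$ reduces the claim to $\sum_{k\geq 1}(q^k-q^{3k})/(1+q^k)=q/(1-q^2)$, and the factorization $q^k-q^{3k}=q^k(1-q^k)(1+q^k)$ makes this immediate. This is exactly the analytic route Andrews followed in \cite{AndrewsBeck}, so your proposal matches the proof the paper is citing.
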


\begin{corollary}\label{cor_andt1} For $n\geq 0$, $b_1(n)-a_1(n)=c(n).$
\end{corollary}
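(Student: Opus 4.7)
The plan is to reduce Corollary \ref{cor_andt1} directly to Theorem \ref{AB} by translating hook counts into part-size counts using the paragraph immediately preceding the statement. The key observation already recorded there is: for any partition $\lambda$, a cell in its Young diagram has hook length $1$ exactly when its row ends at that cell and no cell sits directly beneath it; consequently, the number of length-$1$ hooks of $\lambda$ equals the number of distinct part sizes appearing in $\lambda$.

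First I would apply this identity to each odd partition of $n$: summing over $\mathcal{O}(n)$ gives that $a_1(n)$ equals the total number of different part sizes appearing in all odd partitions of $n$. Next I would apply the same identity to each distinct partition of $n$; however, a partition in $\mathcal{D}(n)$ has every part occurring exactly once, so for $\lambda \in \mathcal{D}(n)$ the number of distinct part sizes equals the total number of parts $\ell(\lambda)$. Summing over $\mathcal{D}(n)$ gives that $b_1(n)$ equals the total number of parts in all distinct partitions of $n$.

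With these two identifications, the difference $b_1(n) - a_1(n)$ is exactly the quantity evaluated in Theorem \ref{AB}, namely the total number of parts in all distinct partitions of $n$ minus the total number of different part sizes in all odd partitions of $n$. By Andrews' theorem this difference equals $c(n)$, completing the proof.

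There is essentially no obstacle here; the content of the corollary lies entirely in the (now folklore) reinterpretation of hook-length-$1$ cells as distinct part sizes, together with the observation that the distinct-parts hypothesis collapses ``number of distinct part sizes'' to ``number of parts.'' The analytic depth is supplied by Andrews' proof of Theorem \ref{AB}, which we invoke as a black box.
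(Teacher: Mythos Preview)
Your proof is correct and is exactly the argument the paper intends: the corollary is left without an explicit proof in the paper precisely because it follows immediately from the observation (stated just before Theorem \ref{AB}) that hooks of length $1$ count distinct part sizes, together with the fact that in a distinct partition this equals the number of parts, so Theorem \ref{AB} applies directly.
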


Thus, there are at least as many hooks of length $1$ in all distinct partitions of $n$ as there are in all odd partitions of $n$.  Given Corollary \ref{cor_andt1} as well as the identity
\begin{align} \label{Conjecture Eqn}
    \sum_{t \geq 1} a_t(n) = \sum_{t \geq 1} b_t(n)
\end{align}
which follows from Euler's identity, it is  natural  to study the relationship between $a_t(n)$ and $b_t(n)$ for any fixed $t\geq 1$. Corollary \ref{cor_andt1} shows that $b_1(n)\geq a_1(n)$.  We  conjecture  that this bias  reverses for $t\geq 2$; that is, for large enough $n$ we conjecture that there are at least as many hooks of length $t$ in all odd partitions of $n$  as there are in all  distinct partitions of $n$. We state this conjecture formally below.

\begin{conjecture} \label{Odd-Distinct Conjecture}   \
\begin{itemize}
  \item[(i)] For every integer $t\geq 2,$ there exists an integer $N_t$ such that for all $n > N_t$, we have $a_t(n) \geq b_t(n)$. 
  Moreover,   we  conjecture the following values of $N_t$ for $2\leq t \leq 10$: 
\begin{figure}[H] \centering
\begin{tabular}{|c|c|c|c|c|c|c|c|c|c|} \hline
$t$ & 2 & 3 & 4 & 5 & 6 & 7 & 8 & 9 & 10 \\ \hline
$N_t$ & 0 & 7 & 8 & 18 & 16 & 34 & 34 & 56 & 59 \\ \hline
\end{tabular}
\caption{Conjectural values for $N_t$.}
\label{fig2}
\end{figure}
\item[(ii)] For every integer $t\geq 2$ we have that $a_t(n) - b_t(n) \to \infty$ as $n \to \infty$. 
\end{itemize}

\end{conjecture}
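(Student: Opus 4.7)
My plan is to attack Conjecture 1.5 for each fixed $t \geq 2$ separately by a three-step procedure: derive a workable generating function for $a_t(n) - b_t(n)$, use a circle-method asymptotic analysis to pin down the leading term, and then handle the small-$n$ range up to $N_t$ by direct enumeration.

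First, following the analog of Han's identity \eqref{eqn_Han1} for restricted partition classes, I would write
\begin{align*}
    \sum_{n \geq 0} a_t(n) x^n &= \prod_{k \geq 1}(1-x^{2k-1})^{-1} \cdot W_t^{\mathcal{O}}(x), \\
    \sum_{n \geq 0} b_t(n) x^n &= \prod_{k \geq 1}(1+x^k) \cdot W_t^{\mathcal{D}}(x),
\end{align*}
where the weight factors $W_t^{\mathcal{O}}, W_t^{\mathcal{D}}$ are obtained by differentiating Han-style $y$-refined generating functions at $y = 1$. The paper promises these in explicit form for $t = 2$ in \eqref{eqn_F2zq}--\eqref{eqn_G2zq} and in more complicated form for $t = 3$; the same template extends in principle to all $t$, though manifest positivity is lost beyond $t = 2$.

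Second, since both $\prod(1 - x^{2k-1})^{-1}$ and $\prod(1 + x^k)$ equal $(x;x^2)_\infty^{-1}$, the two series share the same dominant singularity at $x = 1$ with common exponential growth rate $\pi\sqrt{n/3}$. I would therefore subtract the generating functions, apply Wright's circle method (or a Rademacher-style exact formula as invoked in the abstract) to the difference, and aim to prove
$$a_t(n) - b_t(n) \sim C_t \, n^{\alpha_t} \exp\!\lp \pi \sqrt{n/3}\rp$$
for some computable constant $C_t > 0$ and exponent $\alpha_t \in \R$. Positivity of $C_t$ immediately yields part (ii), and combined with an effective error bound also yields part (i) for some explicit threshold. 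The specific values of $N_t$ in Figure \ref{fig2} would then follow from tightening the error bound and verifying the remaining finite range numerically.

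The main obstacle is establishing $C_t > 0$ as $t$ varies. For $t \geq 3$, the non-positivity of the generating function's coefficients blocks any direct combinatorial sign argument, so one must track the contribution of each singularity of $W_t^{\mathcal{O}}$ and $W_t^{\mathcal{D}}$ to the asymptotic constant and control cancellations at the dominant saddle. The authors' observation that they required a new linear inequality for $q(n)$ just to handle $t = 3$ strongly suggests that each $t$ may need its own auxiliary inequality or refined asymptotic ingredient, and a uniform proof across all $t \geq 2$ is likely to demand substantial new ideas — perhaps a combinatorial injection from hooks of length $t$ in distinct partitions into hooks in odd partitions with controlled surplus, or a unifying $q$-series identity that compares the two hook-counting generating functions term-by-term.
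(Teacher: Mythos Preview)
The statement is a \emph{conjecture}, and the paper does not prove it in general: Theorems~\ref{diff} and~\ref{limit} establish parts (i) and (ii) only for $t\in\{2,3\}$, with the general case left open. So there is no full proof to compare against, and your proposal is best read as a strategy outline for the cases the paper does settle, together with an honest assessment of the obstacles beyond.

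On that reading, your second and third steps match the paper's approach for $t=2,3$: the asymptotics in Theorem~\ref{limit} are obtained via Wright's circle method (Proposition~\ref{WrightCircleMethod}) applied to the explicit generating functions, yielding formulas of exactly the shape $C_t\, n^{-1/4} e^{\pi\sqrt{n/3}}$ with $C_t>0$ computed explicitly (Corollaries~\ref{2HooksAsymp Corollary} and~\ref{3HooksAsymp Corollary}). For part~(i), however, the paper does \emph{not} proceed via effective circle-method error bounds on $a_t(n)-b_t(n)$ directly: for $t=2$ the difference generating function is massaged into a manifestly positive product \eqref{t2diff}, and for $t=3$ it is split as $A(q)+B(q)$ (Proposition~\ref{prop_t3gfprime}), with $A(q)$ handled by $q$-series manipulations and $B(q)$ reduced to the linear inequality for $\rho(n,m)$ (Corollary~\ref{rho(n,m) Katriel}), whose proof in turn invokes the Rademacher-type formula for $q(n)$. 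The linear $q(n)$ inequality is thus an ingredient for part~(i) at $t=3$, not for the asymptotics in part~(ii).

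The genuine gap in your plan is step one. There is no known analog of Han's formula~\eqref{eqn_Han1} for odd or distinct partitions, and the paper says so explicitly: the generating functions for $a_t(n)$ and $b_t(n)$ ``are difficult to derive explicitly,'' and the $t=2,3$ cases are obtained not by differentiating a Han-style $y$-refined product but by ad~hoc combinatorial analysis of where hooks of length~$2$ or~$3$ can sit in an odd (resp.\ distinct) Young diagram (Propositions~\ref{gf_a2}, \ref{gf_b2}, \ref{prop_F3gf}, \ref{prop_G3gf}). Already at $t=3$ the expressions are complicated enough that the paper poses finding simpler, manifestly positive generating functions for $t\geq 3$ as an open problem. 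So the assertion that ``the same template extends in principle to all $t$'' is precisely the missing idea; without workable generating functions for general~$t$, neither the circle-method asymptotic nor any effective bound can get off the ground, and your own closing paragraph correctly recognizes this.
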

We note that for $n\leq N_t$ ($2\leq t \leq 10$),  some values of $a_t(n)-b_t(n)$ are   negative and some are nonnegative.  Data supporting Conjecture \ref{Odd-Distinct Conjecture} was {obtained} by enumerating partitions and not from generating functions; this is because the generating functions for $a_t(n)$ and $b_t(n)$ are  difficult to derive explicitly. 
We are, however, able to write down generating functions for $t=2$ and $t=3$, which we use to ultimately prove Conjecture \ref{Odd-Distinct Conjecture} for $t=2$ and $t=3$.

\begin{theorem}\label{diff}
We have $a_2(n) \geq b_2(n)$ for all $n \geq 0$ and $a_3(n) \geq b_3(n)$ for all $n > 7$.
\end{theorem}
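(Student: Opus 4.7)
The plan is to derive explicit generating functions for $a_t(n)$ and $b_t(n)$ in the cases $t = 2$ and $t = 3$ as counterparts of Han's identity \eqref{eqn_Han1} restricted to the subsets $\mathcal O(n)$ and $\mathcal D(n)$, and then to compare them term-by-term. Differentiating the two-variable generating functions \eqref{eqn_F2zq} and \eqref{eqn_G2zq} (and the $t=3$ analogues from Section \ref{sec_t=3}) in the hook-counting variable and evaluating at $1$ should produce univariate $q$-series for $\sum_n a_t(n) x^n$ and $\sum_n b_t(n) x^n$.

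For $t = 2$, the approach is to show that the coefficient-wise difference series $\sum_{n \geq 0}(a_2(n) - b_2(n)) x^n$ admits a manifestly positive $q$-series expression. I would expect this to be achievable by combining Euler's product identity with standard $q$-series transformations, ultimately exhibiting the difference as a sum of series with nonnegative coefficients. Alternatively, one could aim for a direct combinatorial injection pairing each hook of length $2$ in a distinct partition of $n$ with a hook of length $2$ in an odd partition of $n$, yielding $a_2(n) \geq b_2(n)$ for all $n \geq 0$.

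For $t = 3$, the generating function for $a_3(n) - b_3(n)$ is not manifestly positive, so I would separate the argument into an asymptotic regime and a finite regime. In the asymptotic regime, I would apply Wright's variant of the circle method to both $\sum_n a_3(n) x^n$ and $\sum_n b_3(n) x^n$, using their near-modular singularity structure at $x=1$ to obtain asymptotic expansions. The dominant contributions likely match (so that the bias emerges at a subleading scale), while the next-order term should reveal $a_3(n) - b_3(n) \to \infty$. To turn this into a statement effective for all $n > 7$, I would invoke the effective linear inequalities for $q(n)$ established earlier in the paper: after rewriting $a_3(n) - b_3(n)$ as an explicit linear combination of shifted values of $q(n)$ and related counting functions, the desired bound should reduce to one of those already proved. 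The finitely many cases $0 \leq n \leq 7$ would then be handled by direct computation, matching the conjectured threshold $N_3 = 7$ in Figure \ref{fig2}.

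The main obstacle should be the $t = 3$ case: crafting a decomposition of $a_3(n) - b_3(n)$ that is simultaneously compatible with the available effective inequalities for $q(n)$ and strong enough to cover every $n > 7$ with no intractable gap between the asymptotic and finite regimes. By contrast, the $t = 2$ case should be comparatively routine once the right identity exposing the positivity of $a_2(n) - b_2(n)$ is located.
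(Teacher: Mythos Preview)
Your outline for $t=2$ is essentially the paper's approach: derive the generating functions via Propositions \ref{gf_a2} and \ref{gf_b2}, take the difference, and simplify to the manifestly nonnegative series $q^3\frac{1+q^3}{1-q^2}(-q^3;q)_\infty$.

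For $t=3$, however, there are genuine gaps. First, a factual slip: the leading asymptotics of $a_3(n)$ and $b_3(n)$ do \emph{not} match (see Theorem \ref{3HooksAsymp}); the constants differ, so the bias appears already at leading order, not at a subleading scale. More importantly, Wright's circle method is used in the paper only for Theorem \ref{limit}, not for Theorem \ref{diff}; it yields non-effective $O$-terms and cannot by itself produce a threshold as small as $n>7$.

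The central obstruction you are missing is that $a_3(n)-b_3(n)$ cannot be written as a finite linear combination of shifted values of $q(n)$. The generating function for $b_3(n)$ (Proposition \ref{prop_G3gf}) contains the Lambert series $(-q;q)_\infty\sum_{m\ge 2}\frac{q^m}{1+q^m}$, which has infinitely many poles on $|q|=1$ and does not reduce to a rational multiple of $(-q;q)_\infty$. The paper gets around this by a somewhat ad hoc decomposition $A(q)+B(q)$ (Proposition \ref{prop_t3gfprime}): the first few Lambert terms are stripped off into $A(q)$ and shown nonnegative by direct $q$-series manipulation (Lemma \ref{lem_clm1}), while the remaining tail is rewritten so that only a specific polynomial combination $(-q^9;q)_\infty(f(q)-g(q))$ needs the effective linear inequality (Corollary \ref{rho(n,m) Katriel}, applied with $m=9$). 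Even then, the effective bound from the linear inequality is $N\approx 67910$, not $7$; the finite computer verification must cover all $n$ up to that threshold, not merely $0\le n\le 7$.
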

\begin{theorem}\label{limit}
For $t \in \{2,3\}$, we have $a_t(n) - b_t(n) \to \infty$ as $n \to \infty$. 
\end{theorem}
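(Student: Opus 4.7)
The plan is to extract leading-order asymptotic expansions for $a_t(n)$ and $b_t(n)$, $t \in \{2,3\}$, from their explicit generating functions and compare the resulting main terms.

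First, I would use the generating functions for $a_t(n)$ and $b_t(n)$ established in the preceding sections, namely \eqref{eqn_F2zq} and \eqref{eqn_G2zq} for $t = 2$, and the analogous (more complicated) identities for $t = 3$ given in Section \ref{sec_t=3}. Taking formal derivatives in the hook-marker variable and setting it to $1$, each of these can be written, up to an analytic correction, as the generating function for $|\mathcal{O}(n)| = |\mathcal{D}(n)| = q(n)$ multiplied by an auxiliary $q$-series that records the average number of hooks of length $t$ per partition. The factors built from $\prod (1 - q^{2k-1})^{-1}$ and $\prod (1 + q^k)$ both transform nicely under $q \mapsto e^{-s}$ via the modular transformation of the Dedekind eta function, so the dominant singularity is at $q = 1$ in both cases.

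Second, I would apply Wright's circle method to each generating function. The modular transformation near the cusp $q = 1$ produces asymptotics of the shape
\begin{align*}
a_t(n) &= C_t^{\mathcal{O}} \cdot n^{\alpha_t} \cdot q(n) \cdot \bigl(1 + O(n^{-1/2})\bigr), \\
b_t(n) &= C_t^{\mathcal{D}} \cdot n^{\alpha_t} \cdot q(n) \cdot \bigl(1 + O(n^{-1/2})\bigr),
\end{align*}
for a common exponent $\alpha_t$ (expected to be $\alpha_t = 1/2$) and explicit leading constants $C_t^{\mathcal{O}}, C_t^{\mathcal{D}} > 0$ determined by the leading behavior of the auxiliary factors as $q \to 1^-$. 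One then explicitly evaluates these constants in closed form and verifies the strict inequality $C_t^{\mathcal{O}} > C_t^{\mathcal{D}}$, which together with the asymptotic for $q(n)$ gives $a_t(n) - b_t(n) \sim (C_t^{\mathcal{O}} - C_t^{\mathcal{D}}) n^{\alpha_t} q(n) \to \infty$. The numerical data of Theorem \ref{diff} and Conjecture \ref{Odd-Distinct Conjecture} make this strict inequality strongly plausible.

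The main obstacle is the accurate evaluation and comparison of the leading constants, especially for $t = 3$, where the generating functions are neither manifestly positive nor built only from standard infinite products, so controlling cancellation and error terms requires delicate $q$-series manipulations. Should the leading constants $C_t^{\mathcal{O}}$ and $C_t^{\mathcal{D}}$ coincide (which we do not expect), the argument would instead require a more refined Rademacher-type expansion to expose a positive subleading term in $a_t(n) - b_t(n)$; this would essentially reduce to a more careful application of the same circle method machinery used for Theorem \ref{diff}.
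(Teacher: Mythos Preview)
Your plan is exactly the paper's approach: Section~\ref{Asymptotic section} applies Wright's circle method (in the Ngo--Rhoades formulation, Proposition~\ref{WrightCircleMethod}) to the generating functions from Sections~\ref{sec_t=2}--\ref{sec_t=3}, writing each as $\xi(q)=(-q;q)_\infty$ times an auxiliary factor $L(q)$ whose leading Laurent coefficient at $z=0$ (with $q=e^{-z}$) yields the constant. The computation there gives, in your notation, $(C_2^{\mathcal O},C_2^{\mathcal D})=(\tfrac34,\tfrac12)$ and $(C_3^{\mathcal O},C_3^{\mathcal D})=(\tfrac23,\log 2-\tfrac18)$, so the strict inequality you flagged as open does hold; the one point your sketch underspecifies is that for $b_3$ the auxiliary factor contains the Lambert series $\sum_{m\ge1} q^m/(1+q^m)$, whose leading term $\log(2)/z$ and minor-arc bound must be handled separately (the paper invokes Lemmas~\ref{Lambert Major Arc}--\ref{Lambert Minor Arc} for this).
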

\begin{remark}
Together, Theorems \ref{diff} and \ref{limit} prove that Conjecture \ref{Odd-Distinct Conjecture} is true when $t=2$ and $t=3$.
\end{remark} 

In order to prove Theorem \ref{diff} for $t=3$, we use a special case of our Theorem \ref{q(n) Katriel} which gives a   general result on linear inequalities for $q(n)$, the number of partitions of $n$ into distinct parts. The result of Theorem \ref{q(n) Katriel} is also of independent interest (see also \cite{Katriel} for a similar result for $p(n)$, the number of partitions of $n$).

\begin{theorem}   \label{q(n) Katriel}
Suppose $\sum_{k=1}^r \alpha_k < \sum_{\ell=1}^s \beta_\ell$, where $\boldsymbol{\alpha} = \{\alpha_k\}_{k=1}^r$ and $\boldsymbol{\beta}= \{\beta_\ell\}_{\ell=1}^s$ are sequences of positive real numbers, and $r,s \in \mathbb N$.  Moreover, let $\{\mu_k\}_{k=1}^r, \{\nu_\ell\}_{\ell=1}^s \subset \mathbb N_0$, where $\mu_1 < \mu_2 < \cdots < \mu_r$ and $\nu_1 < \nu_2 < \cdots < \nu_s$.
Then there exists an $N$ (which depends on $\boldsymbol{\alpha},\boldsymbol{\beta}$, and $\mu_r$) such that for $n>N$,
\begin{align} \label{Katriel Equation}
    \sum_{k=1}^r \alpha_k \, q(n+\mu_k) \leq \sum_{\ell=1}^s \beta_\ell \, q(n+\nu_\ell).
\end{align}
Specifically, we may take {$N = {N_{\boldsymbol{\alpha},\boldsymbol{\beta},\mu_{r}}}$} as given in \eqref{eqn:Nmanysubscripts}.
\end{theorem}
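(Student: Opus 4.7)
The plan is to exploit the Rademacher-type exact formula for $q(n)$ referenced in the abstract, which expresses $q(n)$ as a rapidly convergent series whose principal term is asymptotically a fixed multiple of $n^{-3/4}\exp(\pi\sqrt{n/3})$. The underlying intuition is simple: for each fixed $c\in\mathbb{N}_0$, the ratio $q(n+c)/q(n)$ tends to $1$ as $n\to\infty$, so
$$\sum_{\ell=1}^{s}\beta_{\ell}\,q(n+\nu_{\ell})-\sum_{k=1}^{r}\alpha_{k}\,q(n+\mu_{k}) \;=\; \Bigl(\sum_{\ell=1}^{s}\beta_{\ell}-\sum_{k=1}^{r}\alpha_{k}\Bigr) q(n) \;+\; o(q(n)),$$
and the hypothesis $\sum_{k}\alpha_{k}<\sum_{\ell}\beta_{\ell}$ makes the leading coefficient strictly positive. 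The whole game is therefore to make the $o(q(n))$ term effective and solve for the crossover threshold.

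First, I would truncate the Rademacher series after the principal term and derive two-sided bounds $q(n)_{-}\leq q(n)\leq q(n)_{+}$ with explicit, computable error. Using $\sqrt{n+c}\leq \sqrt{n}+c/(2\sqrt{n})$ together with the analogous lower estimate, these upper and lower bounds translate into explicit inequalities of the form $(1-\varepsilon_{c}(n))q(n)\leq q(n+c)\leq (1+\varepsilon_{c}(n))q(n)$, valid for all $c\leq \mu_{r}$, with $\varepsilon_{c}(n)$ an explicit function that decays to zero. Applying the upper bound to each summand on the $\alpha$-side and the lower bound to each summand on the $\beta$-side, the difference is bounded below by a positive multiple of $q(n)$ minus a controlled correction governed by $\max_{c\leq \mu_{r}}\varepsilon_{c}(n)$. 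Setting this lower bound nonnegative and solving for $n$ produces the explicit threshold $N=N_{\boldsymbol{\alpha},\boldsymbol{\beta},\mu_{r}}$ referenced in \eqref{eqn:Nmanysubscripts}, which indeed depends on the $\alpha_{k}$, $\beta_{\ell}$, and on $\mu_{r}$ alone (via the worst-case offset in the shifts).

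The main obstacle will be keeping the error estimates clean enough that the resulting $N$ is both explicit and reasonably sharp. The exponential factor $\exp(\pi\sqrt{(n+c)/3})/\exp(\pi\sqrt{n/3})$ is especially delicate, because a loose estimate there inflates $N$ significantly; one must also bound the tail of the Rademacher series uniformly in $c\leq \mu_{r}$ rather than term by term. Managing the interaction between the polynomial factor $n^{-3/4}$, the exponential factor, and the Rademacher tail, all while preserving explicit dependence only on $\boldsymbol{\alpha},\boldsymbol{\beta},\mu_{r}$, is where the delicate bookkeeping lies.
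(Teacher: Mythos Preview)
Your proposal is correct and follows essentially the same approach as the paper: both exploit the Rademacher-type formula for $q(n)$ (the paper uses the Beckwith--Bessenrodt effective truncation together with explicit Bessel function estimates) to show effectively that $q(n+c)/q(n)\to 1$, then balance the two sides using $\sum_k\alpha_k<\sum_\ell\beta_\ell$. The paper streamlines your bookkeeping in one place: rather than proving two-sided bounds $(1-\varepsilon_c(n))q(n)\le q(n+c)\le(1+\varepsilon_c(n))q(n)$ for every shift $c$, it uses the trivial monotonicity bound $q(n+\nu_\ell)\ge q(n)$ on the $\beta$-side and $q(n+\mu_k)\le q(n+\mu_r)$ on the $\alpha$-side, so only the single effective upper bound $q(n+\mu_r)/q(n)-1<\varepsilon$ with $\varepsilon=(\sum_\ell\beta_\ell-\sum_k\alpha_k)/\sum_k\alpha_k$ is required---this sidesteps the uniformity-in-$c$ issue you flagged and explains why $N$ depends only on $\mu_r$ among the shifts.
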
 

\begin{remark}
The assumptions $\mu_1 < \mu_2 < \cdots < \mu_r$ and $\nu_1 < \nu_2 < \cdots < \nu_s$ are made without loss of generality.
\end{remark}

The remainder of the paper is structured as follows. 
In Section \ref{sec_katriel}, we prove Theorem \ref{q(n) Katriel}, as well as an analogous result (Corollary \ref{rho(n,m) Katriel}) for $\rho(n,m),$ the number of partitions of $n$ into distinct parts at least $m$.
In Section \ref{sec_t=2} and \ref{sec_t=3} we prove Theorem \ref{diff} for $t=2$ and $t=3$, respectively.  
In Section \ref{Asymptotic section}, we establish asymptotic formulas for $a_t(n)$ and $b_t(n)$ for $t\in\{2,3\}$, proving Theorem \ref{limit}. In Section \ref{Further bias}, we establish bias results for additional partition statistics related to hook length.  Finally, in Section \ref{Further Conjectures},
we conjecture similar biases and asymptotics, as well as congruences for the number of hooks of fixed length in odd distinct partitions versus self-conjugate partitions.

\section{General linear inequalities for partitions into distinct parts}\label{sec_katriel}

The case of $t=3$ in the proof of Theorem \ref{diff} relies centrally on a certain linear partition inequality (see \eqref{eqn_rhoineqt3}). In this section, we prove results which immediately imply \eqref{eqn_rhoineqt3}. In fact, we prove much more general results, namely Theorem \ref{q(n) Katriel} and Corollary \ref{rho(n,m) Katriel}, which are also of independent interest. The most important idea behind the proof of Theorem \ref{q(n) Katriel} is the circle method developed by Hardy and Ramanujan in their seminal paper \cite{HardyRamanujan}. Using the modular transformation law of Dedekind's eta function, they prove the famous asymptotic formula
\begin{align*}
    p(n) \sim \dfrac{1}{4n\sqrt{3}} e^{\pi \sqrt{\frac{2n}{3}}}
\end{align*}
as $n \to \infty$. Their method was later refined by Rademacher \cite{Rad} to prove an exact formula for $p(n)$ expressed as a convergent infinite series involving Kloosterman sums and Bessel functions. Such formulas in the literature on partition theory are usually called {\it Rademacher-type formulas}. The  Rademacher-type formula for $q(n)$, established by Hagis in \cite{Hagis63}, is given by
\begin{equation}\label{q_Rad} q(n) = \frac{\pi}{(24n+1)^{1/2}} \mathop{\sum_{k=1}}_{k \text{ odd}}^\infty k^{-1} \Bigg( \ 
\sideset{_{}^{}}{_{}^{'}}\sum_{h \!\!\!\pmod{k}} \chi(h,k)e^{\frac{-2\pi i n h}{k}} \Bigg) I_1\Bigg(\frac{\pi}{12k}(48n+2)^{1/2}\bigg).
    \end{equation}
    Above, $I_1$ is the Bessel function of the first order (see \eqref{def_Isbessel}), $\chi$ is an explicit exponential function, and the sum on $h$ is taken over   $h\pmod{k}$ relatively prime to $k$. 
As discussed in the next subsection, this equation implies strong asymptotics for $q(n)$ which we use to prove Theorem \ref{q(n) Katriel}.
\subsection{Proof of Theorem \ref{q(n) Katriel}}
Recall that we aim to prove the inequality
\begin{align*}
    \sum_{k=1}^r \alpha_k q(n+\mu_k) \leq \sum_{\ell=1}^s \beta_\ell q(n+\nu_\ell),
\end{align*}
for sufficiently large $n$, where $\alpha_k, \beta_\ell \in \mathbb{R}^+$ and $\sum_{k=1}^r \alpha_k < \sum_{\ell=1}^s \beta_\ell$. We also explicitly define $N_{\boldsymbol{\alpha}, \boldsymbol{\beta}, \mu_r}$ such that this inequality holds for all $n >N_{\boldsymbol{\alpha}, \boldsymbol{\beta}, \mu_r}$.  

The main idea of the proof of Theorem \ref{q(n) Katriel} is that the Rademacher-type expansion \eqref{q_Rad} allows us to reduce linear inequalities for $q(n)$ to inequalities involving only Bessel functions.  Classical explicit asymptotics for Bessel functions then reduce this problem to elementary inequalities  involving exponential functions. To this end, we establish the following key proposition.

\begin{proposition} \label{q(n) Rademacher Bounds}
For $L \in \mathbb N$
and $\varepsilon >0,$ there exists an $N(\varepsilon,L)$ such that
$$0< \frac{q(n+L)}{q(n)} - 1 < \varepsilon$$ for $n> N(\varepsilon,L)$. Specifically, we may take $N(\varepsilon,L)=N_{A,B,C}(\varepsilon,L)$ as in \eqref{N-construction} below.  
\end{proposition}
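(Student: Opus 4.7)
The plan is to exploit the Rademacher-type formula \eqref{q_Rad} by isolating the dominant $k=1$ term and showing that it controls both $q(n)$ and $q(n+L)$ so precisely that one can read off effective bounds for the ratio. Write
\begin{equation*}
q(n) = M(n) + E(n),
\end{equation*}
where $M(n)$ is the $k=1$ summand and $E(n)$ is the remaining (odd $k \geq 3$) tail. The guiding intuition is that $M(n)$ is of order $n^{-3/4} e^{\pi\sqrt{n/3}}$, while $E(n)$ is exponentially smaller, so the ratio $q(n+L)/q(n)$ is asymptotic to $M(n+L)/M(n)$, which in turn tends to $1$ from above.

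First, I would analyze the main term $M(n)$ using an effective form of the classical asymptotic $I_1(x) = \frac{e^x}{\sqrt{2\pi x}}(1+O(1/x))$. Inserting this, together with the fact that the $h$-sum at $k=1$ reduces to a single unit-modulus root of unity, yields an explicit two-sided estimate of the shape
\begin{equation*}
M(n) = \frac{C_0\, e^{\pi\sqrt{(48n+2)}/12}}{(24n+1)^{1/2}\,(48n+2)^{1/4}}\bigl(1 + R(n)\bigr),
\end{equation*}
with $|R(n)|$ bounded by an explicit decreasing function of $n$. Next, I would bound $E(n)$ using $|\chi(h,k)|\leq 1$, the trivial bound on the Ramanujan-type inner sum (at most $k$ terms), and the uniform bound $I_1(x) \leq \sinh(x)$. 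Since the argument of $I_1$ at level $k$ is $\pi\sqrt{48n+2}/(12k)$, geometric-series comparison of the resulting exponentials against $e^{\pi\sqrt{48n+2}/36}$ (the $k=3$ term) yields $|E(n)|/M(n) = O(\exp(-c\sqrt{n}))$ for an explicit $c>0$. This gives
\begin{equation*}
q(n) = M(n)\bigl(1 + O(e^{-c\sqrt{n}})\bigr),
\end{equation*}
and the same with $n$ replaced by $n+L$, with all implied constants effective.

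Second, I would expand the ratio $M(n+L)/M(n)$. The key computation is
\begin{equation*}
\sqrt{48(n+L)+2}\;-\;\sqrt{48n+2}\;=\;\frac{48L}{\sqrt{48(n+L)+2}+\sqrt{48n+2}},
\end{equation*}
which is positive and of order $L/\sqrt{n}$. Combining with the polynomial prefactors and the error terms from the previous paragraph, one obtains an explicit expression of the form
\begin{equation*}
\frac{q(n+L)}{q(n)}\;=\;1 \;+\; \frac{A\,L}{\sqrt{n}} \;+\; (\text{smaller terms involving } B, C),
\end{equation*}
where $A,B,C$ are explicit positive constants. Positivity of $q(n+L)/q(n)-1$ for large $n$ follows because the leading correction $A\,L/\sqrt{n}$ is positive and dominates the error in absolute value; the upper bound by $\varepsilon$ is obtained by inverting this effective expansion to solve for $n$, producing the explicit $N_{A,B,C}(\varepsilon,L)$ referenced as \eqref{N-construction}.

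The main obstacle, and where most of the work lives, is effectivity rather than ideas: every $O(\cdot)$ above must be replaced by a fully explicit inequality, including an effective Bessel asymptotic, an effective tail bound for the Rademacher series uniform in $n$, and an effective control of the square-root difference. Once these three effective constants $A$, $B$, $C$ are assembled, the threshold $N(\varepsilon,L)$ comes out by elementary algebra, and the strictness of $q(n+L)>q(n)$ is automatic from the leading $L/\sqrt{n}$ term being positive in the effective range.
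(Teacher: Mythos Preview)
Your overall strategy is sound and matches the paper's in spirit: isolate the $k=1$ main term of the Rademacher expansion, use an effective Bessel asymptotic, bound the tail explicitly, and then expand the ratio via the elementary identity $\sqrt{n+L}-\sqrt{n}=L/(\sqrt{n+L}+\sqrt{n})$. Two differences are worth noting. First, the paper does not bound the Rademacher tail from scratch; it quotes Beckwith--Bessenrodt (their Theorem~2.3), which already packages $q(n)=\tfrac{\pi^2}{6\sqrt{2}\mu}I_1(\mu)+E(\mu)$ with an explicit bound on $|E(\mu)|$. Your plan to bound the tail via $|\chi|\le 1$ and $I_1(x)\le\sinh x$ would work too, but is more labor. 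Second, the paper disposes of the lower bound $q(n+L)/q(n)>1$ in one line by the trivial combinatorial fact that $q$ is strictly increasing for $n\ge 5$; you do not need the analytic machinery for that direction.

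The one genuine misreading is the role of the constants $A,B,C$. In the paper these are \emph{not} constants extracted from the effective Bessel or tail estimates; they are \emph{free parameters} (any positive reals with $\tfrac1A+\tfrac1B+\tfrac1C<1$) introduced to split the $\varepsilon$-budget among three subsidiary error terms $F_2,F_3,F_4$, each of which is bounded by $\tfrac{\varepsilon}{A}F_1$, $\tfrac{\varepsilon}{B}F_1$, $\tfrac{\varepsilon}{C}F_1$ respectively. This produces four explicit thresholds $N_A,N_B,N_C,N_D$ (the last from the main-term ratio), and $N_{A,B,C}(\varepsilon,L)$ is their maximum together with $26$. So the formula \eqref{N-construction} you are aiming to reproduce is a max of four elementary expressions depending on the chosen $A,B,C$, not a single expression with fixed numerical constants $A,B,C$. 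Your sketch would produce \emph{an} effective $N(\varepsilon,L)$, but not the specific $N_{A,B,C}(\varepsilon,L)$ the proposition names.
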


\begin{remark}
    As will become clear in the proof of Proposition \ref{q(n) Rademacher Bounds}, we may choose any $A,B,C$ such that $A,B,C > 0$ and $0 < \frac{1}{A} + \frac{1}{B} + \frac{1}{C} < 1$ in defining $N(\varepsilon,L) = N_{A,B,C}(\varepsilon,L)$ in \eqref{N-construction}.  
    In practice, one can choose such $A,B,C$ to obtain a suitably small $N(\varepsilon,L)$.
\end{remark}

To prove Proposition \ref{q(n) Rademacher Bounds}, we will not need the full exact formula \eqref{q_Rad} for $q(n)$, but only certain error terms connected with it. Recall that  the Bessel functions $I_s(x)$, $s \in \C$,  are defined  by
\begin{align} \label{def_Isbessel}
    I_s(x) := \dfrac{1}{2\pi i} \int_{1 - i\infty}^{1 + i \infty} w^{-s-1} \exp\lp \dfrac{x}{2} \lp w + \dfrac 1w \rp \rp dw.
\end{align}
We will use the following result due to Beckwith and Bessenrodt, which is derived from Hagis' formula \eqref{q_Rad}.
\begin{theorem}[{\cite[Theorem 2.3]{BeckwithBessenrodt}}] \label{q(n) Exact Bound}
    Let $\mu =\mu_n:= \dfrac{\pi}{6 \sqrt{2}} \sqrt{24n+1}$. Then we have
    \begin{align*}
        q(n) = \dfrac{\pi^2}{6\sqrt{2}\mu} I_1(\mu) + E(\mu)
    \end{align*}
    where
    \begin{align*}
        \left| E(\mu) \right| \leq \dfrac{0.9 \pi^2}{6\sqrt{2}} \cdot \dfrac{e^\mu}{\mu^2}\lp 1 + 5 \mu^2 e^{-\mu} \rp.
    \end{align*}
\end{theorem}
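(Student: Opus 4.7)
The plan is to start from Hagis's Rademacher-type exact formula \eqref{q_Rad} for $q(n)$ and isolate the $k=1$ term as the main term, showing that the remaining tail over odd $k\geq 3$ is bounded by the claimed expression for $E(\mu)$. First I would simplify the Bessel argument: since $\frac{\pi}{12k}(48n+2)^{1/2} = \frac{\pi\sqrt{2}}{12k}\sqrt{24n+1} = \frac{\mu}{k}$, each term of \eqref{q_Rad} has the form (Kloosterman-type factor)$\cdot I_1(\mu/k)$. For $k=1$, the inner sum degenerates to the single term $\chi(0,1)$, which (from the explicit definition of $\chi$) equals $1$, and the outer prefactor $\frac{\pi}{\sqrt{24n+1}}$ rewrites as $\frac{\pi^2}{6\sqrt{2}\,\mu}$; this gives the main term $\frac{\pi^2}{6\sqrt{2}\,\mu}I_1(\mu)$, leaving $E(\mu)$ as the sum over odd $k\geq 3$.

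Next I would bound the tail termwise. The trivial bound $|\chi(h,k)|\leq 1$ together with the fact that the primed $h$-sum has $\phi(k)\leq k$ terms yields
\[
\left|\sideset{_{}^{}}{_{}^{'}}\sum_{h\pmod{k}}\chi(h,k)e^{-2\pi inh/k}\right|\leq k,
\]
so the $k$-th summand contributes at most $\frac{\pi}{\sqrt{24n+1}}\cdot I_1(\mu/k) = \frac{\pi^2}{6\sqrt{2}\,\mu}\,I_1(\mu/k)$. The task then reduces to estimating $\sum_{\substack{k\geq 3\\ k\text{ odd}}}I_1(\mu/k)$ and pulling out an appropriate factor of $1/\mu$.

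For the Bessel sum I would split the odd integers $k\geq 3$ into two regimes, according to whether $\mu/k$ is ``large'' or ``small.'' The small-argument range uses the power-series bound
\[
I_1(x)\ =\ \sum_{m\geq 0}\frac{(x/2)^{2m+1}}{m!(m+1)!}\ \leq\ \tfrac{x}{2}e^{x^2/4},
\]
which makes the corresponding part of the tail comparable to $\mu\sum_{k\geq 3\text{ odd}}k^{-1}e^{\mu^2/(4k^2)}$; after a careful geometric/integral comparison this is what produces the additive constant $5\mu^2 e^{-\mu}\cdot\frac{e^\mu}{\mu^2}=5$ inside the parentheses. The large-argument range (only $k=3$ in practice, once $\mu$ is at all sizable) uses the sharp asymptotic bound $I_1(y)\leq \frac{e^y}{\sqrt{2\pi y}}(1+O(1/y))$, yielding a contribution of order $\frac{e^{\mu/3}}{\sqrt{\mu}}$, which is absorbed into the $\frac{e^\mu}{\mu^2}$ term with a comfortable constant. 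Combining both regimes with the prefactor $\frac{\pi^2}{6\sqrt{2}\,\mu}$ and optimizing the split gives the stated form $\frac{0.9\,\pi^2}{6\sqrt{2}}\cdot\frac{e^\mu}{\mu^2}(1+5\mu^2e^{-\mu})$.

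The main obstacle is nailing down the specific numerical constants $0.9$ and $5$. The shape of the bound is essentially forced by Bessel asymptotics, but getting these particular constants requires: (i) a tight pointwise bound on $I_1$ on the transition regime $x\asymp 1$ where neither the series bound nor the asymptotic bound is sharp, (ii) a careful geometric series estimate of $\sum_{k\geq 3\text{ odd}}$ of each regime's bound that avoids losing a factor, and (iii) verifying that the absorption of the $k=3$ term into the ``$\frac{e^\mu}{\mu^2}$'' piece, rather than into the constant, indeed respects the $0.9$ coefficient. I expect this bookkeeping, rather than any conceptual difficulty, to be the bulk of the work.
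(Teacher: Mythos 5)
First, a point of comparison: the paper does not prove this statement at all --- it is imported verbatim from Beckwith--Bessenrodt \cite[Theorem 2.3]{BeckwithBessenrodt} --- so there is no internal proof to measure your attempt against. Your overall frame (peel off the $k=1$ term of Hagis's formula \eqref{q_Rad} as the main term and bound the remaining odd $k\geq 3$ termwise) is the natural one, and your identification of the main term is correct: the Bessel argument at $k=1$ is indeed $\mu$, the prefactor $\pi/(24n+1)^{1/2}$ rewrites as $\pi^2/(6\sqrt{2}\,\mu)$, and the $k=1$ exponential sum equals $1$.

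However, the tail estimate has a genuine gap. With only the trivial bound $\left|\sum'_{h}\chi(h,k)e^{-2\pi i nh/k}\right|\leq \phi(k)\leq k$, your majorant for $E(\mu)$ is $\frac{\pi^2}{6\sqrt{2}\,\mu}\sum_{k\geq 3,\ k\ \mathrm{odd}}I_1(\mu/k)$, and this series \emph{diverges}: since $I_1(x)=\tfrac{x}{2}+O(x^3)$ as $x\to 0$, the $k$-th term is asymptotic to $\frac{\pi^2}{12\sqrt{2}\,k}$ for $k\gg\mu$, and $\sum_k k^{-1}=\infty$. Your own intermediate expression $\mu\sum_{k\geq 3,\ k\ \mathrm{odd}}k^{-1}e^{\mu^2/(4k^2)}$ already exhibits the problem (each exponential factor is at least $1$), so no amount of careful geometric or integral comparison can extract the additive constant $5$ --- the quantity being estimated is infinite. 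Hagis's series converges only because of cancellation in the exponential sums $A_k(n)=\sum'_{h}\chi(h,k)e^{-2\pi i nh/k}$; any proof along your lines must import explicit nontrivial bounds of Kloosterman--Sali\'e type, say $|A_k(n)|=O(k^{1/2+\varepsilon})$ or $O(k^{2/3})$ with explicit constants, after which $\sum_k k^{-1}|A_k(n)|\,I_1(\mu/k)$ does converge absolutely and your two-regime splitting of the Bessel function becomes viable. (Alternatively, one can forgo the full exact formula and work with a truncated expansion plus an explicit remainder, which is closer to what Beckwith and Bessenrodt actually do.) The numerical bookkeeping for the constants $0.9$ and $5$ is, as you say, routine once absolute convergence is secured; but that convergence is precisely the missing ingredient, and it is not a matter of bookkeeping.
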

We will also require effective estimates for the $I_1$ Bessel function.

\begin{proposition} \label{Bessel Bound}
For all $x > 3$, we have
\begin{align*}
    L_1(x) := \dfrac{e^x}{\sqrt{2\pi x}}\lp 1 - \dfrac{2}{x} \rp - \dfrac{2e^{-x}}{\sqrt{2\pi x}} < I_1(x) < \dfrac{e^x}{\sqrt{2\pi x}}\lp 1 + \dfrac{2}{x} \rp + \dfrac{2e^{-x}}{\sqrt{2\pi x}} =: U_1(x)
\end{align*} for $x>3$.
\end{proposition}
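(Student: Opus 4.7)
The plan is to obtain both bounds through a direct analysis of the Poisson integral representation
\begin{equation*}
I_1(x) = \frac{x}{\pi}\int_{-1}^{1} e^{xt}\sqrt{1-t^2}\,dt,
\end{equation*}
whose mass concentrates at the saddle point $t = 1$ for large $x$. My first step would be the saddle-centering substitution $s = x(1-t)$, which produces
\begin{equation*}
I_1(x) = \frac{e^x}{\pi x}\int_{0}^{2x} e^{-s}\sqrt{2xs}\,\sqrt{1 - s/(2x)}\,ds.
\end{equation*}
In this form the dominant factor $\frac{e^x}{\sqrt{2\pi x}}$ appearing in both $L_1$ and $U_1$ is explained by $\sqrt{2x}\,\Gamma(3/2) = \sqrt{2x}\cdot\sqrt{\pi}/2$, which one obtains by replacing $\sqrt{1-s/(2x)}$ by $1$ and extending the integration to $[0,\infty)$.

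The upper bound is essentially immediate: the trivial estimate $\sqrt{1 - s/(2x)} \leq 1$ yields $I_1(x) \leq \frac{e^x}{\sqrt{2\pi x}}$, which is strictly less than $U_1(x)$ because $U_1$ carries two additional positive summands $\frac{2e^x}{x\sqrt{2\pi x}}$ and $\frac{2e^{-x}}{\sqrt{2\pi x}}$. For the lower bound I would use the elementary inequality $\sqrt{1-u} \geq 1 - u$ on $[0,1]$, together with the values $\Gamma(3/2) = \sqrt{\pi}/2$ and $\Gamma(5/2) = 3\sqrt{\pi}/4$, to derive
\begin{equation*}
I_1(x) \geq \frac{e^x}{\sqrt{2\pi x}}\left(1 - \frac{3}{4x} - \frac{2R(x)}{\sqrt{\pi}}\right), \qquad R(x) := \int_{2x}^\infty e^{-s}\sqrt{s}\,ds,
\end{equation*}
after discarding a nonnegative remainder coming from $\int_{2x}^{\infty} e^{-s}s^{3/2}\,ds$.

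The main, essentially arithmetic, obstacle is to verify that the $\frac{3}{4x}$ correction and the tail $R(x)$ fit within the $\frac{2}{x}$ and $\frac{2e^{-x}}{\sqrt{2\pi x}}$ margins built into $L_1(x)$. One integration by parts gives $R(x) \leq e^{-2x}\bigl(\sqrt{2x} + 1/(2\sqrt{2x})\bigr)$, which is exponentially smaller than any negative power of $x$; the required chain of inequalities then collapses to showing $e^{-2x}\sqrt{x}(4x+1) \leq \frac{5}{4}\sqrt{2\pi}$ for $x > 3$, which I would verify by a direct check at $x = 3$ together with a logarithmic-derivative computation showing the left-hand side is decreasing throughout $[3,\infty)$. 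This yields $I_1(x) > L_1(x)$ and completes the proof.
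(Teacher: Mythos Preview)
Your proof is correct and takes a genuinely different route from the paper's. The paper quotes an explicit asymptotic expansion with remainder from Olver, namely
\[
I_1(z) = \dfrac{e^z}{\sqrt{2\pi z}}\bigl[1+\delta(z)\bigr] + i\,\dfrac{e^{-z}}{\sqrt{2\pi z}}\bigl[1+\gamma(z)\bigr],
\]
with $|\delta(z)| \le \frac{3\pi}{8|z|}\exp\bigl(\frac{3\pi}{8|z|}\bigr)$ and $|\gamma(z)| \le \frac{3}{4|z|}\exp\bigl(\frac{3}{4|z|}\bigr)$, and then simply checks that for $x>3$ these bounds fit inside the $2/x$ and $2$ margins. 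Your approach instead starts from the Poisson integral, performs the Laplace substitution $s=x(1-t)$, and extracts the leading term and error by hand using $\sqrt{1-u}\le 1$, $\sqrt{1-u}\ge 1-u$, and an incomplete-Gamma tail bound.

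What each buys: the paper's argument is very short because the hard analysis is outsourced to Olver's textbook result; your argument is entirely self-contained and more elementary, requiring nothing beyond calculus. In fact your upper bound is sharper than stated (you actually prove $I_1(x) < \frac{e^x}{\sqrt{2\pi x}}$, stronger than $U_1(x)$), and your lower-bound constant $3/(4x)$ is better than the $2/x$ needed, so the final numerical check $e^{-2x}\sqrt{x}(4x+1)\le \tfrac{5}{4}\sqrt{2\pi}$ goes through comfortably at $x=3$.
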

\begin{proof}

    Let $z$ be a complex number with $\left| \arg(z) \right| \leq \frac{\pi}{2}$ and let $s \in \C$. Then, from {\cite[Exercise 7.13.2]{Olver}}, 
    \begin{align*}
        I_1(z) = \dfrac{e^z}{\sqrt{2\pi z}} \left[ 1 +  \delta(z) \right] + i \dfrac{e^{-z}}{\sqrt{2\pi z}} \left[ 1 + \gamma(z) \right],
    \end{align*}
    with bounds
    \begin{align*}
         \left| \delta(z) \right| \leq \dfrac{3\pi}{8|z|} \exp\lp \dfrac{3\pi}{8|z|} \rp, \ \ \ \left| \gamma(z) \right| \leq \dfrac{3}{4|z|} \exp\lp \dfrac{3}{4|z|} \rp.
    \end{align*}
    Thus for $x > 0$, by the triangle inequality we have that
\begin{align*}
    I_1(x) \geq \dfrac{e^x}{\sqrt{2\pi x}}\lp 1 - \dfrac{3\pi}{8x} \exp\lp \dfrac{3\pi}{8x} \rp \rp - \dfrac{e^{-x}}{\sqrt{2\pi x}}\lp 1 + \dfrac{3}{4x} \exp\lp \dfrac{3}{4x} \rp \rp
\end{align*}
and
\begin{align*}
    I_1(x) \leq \dfrac{e^x}{\sqrt{2\pi x}}\lp 1 + \dfrac{3\pi}{8x} \exp\lp \dfrac{3\pi}{8x} \rp \rp + \dfrac{e^{-x}}{\sqrt{2\pi x}}\lp 1 + \dfrac{3}{4x} \exp\lp \dfrac{3}{4x} \rp \rp.
\end{align*}
It is easily checked that for all $x>3$ we have both $$\frac{3\pi}{8x} \exp\lp \frac{3\pi}{8x} \rp < \frac{2}{x}$$ and $$1 + \frac{3}{4x} \exp\lp \frac{3}{4x} \rp < 2$$  from which the statement of the proposition follows.  
\end{proof}

\subsubsection{Proof of Proposition \ref{q(n) Rademacher Bounds}}

Let $\varepsilon > 0$ and let $L \geq 1$ be an integer. We seek to find a real number $N(\varepsilon,L)$ such that for $n > N(\varepsilon,L)$, we  have
\begin{align*}
   0< \dfrac{q(n+L)}{q(n)} - 1 < \varepsilon.
\end{align*} Note that $0< q(n+L)/q(n) - 1$ for all $n\geq 5$. 
Let $\tilde \mu:=\mu_{n+L} = \frac{\pi}{6 \sqrt{2}} \sqrt{24(n+L)+1}$.
By Theorem \ref{q(n) Exact Bound},  it  suffices to prove that for sufficiently large $n$
\begin{align} \label{Halfway simple}
    \dfrac{\frac{\pi^2}{6\sqrt{2}\tilde \mu} I_1\lp \tilde \mu \rp + \widehat E(\tilde \mu)}{\frac{\pi^2}{6\sqrt{2} \mu} I_1(\mu) - \widehat E(\mu)} < 1 + \varepsilon,
\end{align}
where 
\begin{align*}
    \widehat E(\mu) := \dfrac{0.9 \pi^2}{6\sqrt{2}} \cdot \dfrac{e^\mu}{\mu^2}\lp 1 + 5 \mu^2 e^{-\mu} \rp.
\end{align*} 
It is straightforward to show that
\begin{align} \label{E-simplification}
    \widehat E(\mu) < \dfrac{11}{10} \cdot \dfrac{e^\mu}{\mu^2}
\end{align}
for $\mu > 9$, or for $n \geq 25$. Note that $\tilde \mu > \mu$. By applying these simplifications to \eqref{Halfway simple}, together with Proposition \ref{Bessel Bound}, it now suffices to find $N(\varepsilon,L)$ such that for $n > N(\varepsilon,L)$ we have 
\begin{align}\label{ineq2}
    \dfrac{\frac{\pi^2}{6\sqrt{2}\tilde \mu} U_1(\tilde \mu) + \frac{11 e^{\tilde \mu}}{10 \tilde \mu^2}}{\frac{\pi^2}{6\sqrt{2}\mu} L_1(\mu) - \frac{11 e^\mu}{10\mu^2}} < 1 + \varepsilon.
\end{align}
We assume that $n\geq 26$ so that we may apply \eqref{E-simplification} and so that the denominator in \eqref{ineq2} is positive.

Now, using the definitions of $L_1$ and $U_1$, we  simplify and show that \eqref{ineq2} is equivalent to 
\begin{align} \label{ineq3}
    \dfrac{\pi^{\frac 32} e^{\tilde \mu}}{12 \tilde \mu^{\frac 32}} + \dfrac{\pi^{\frac 32} e^{\tilde \mu}}{6 \tilde \mu^{\frac 52}} + \dfrac{\pi^{\frac 32} e^{-\tilde \mu}}{6 \tilde \mu^{\frac 32}} + \dfrac{11 e^{\tilde \mu}}{10 \tilde \mu^2} + \lp 1 + \varepsilon \rp \lp \dfrac{11 e^\mu}{10 \mu^2} + \dfrac{\pi^{\frac 32} e^\mu}{6 \mu^{\frac 52}} + \dfrac{\pi^{\frac 32} e^{-\mu}}{6 \mu^{\frac 32}} \rp < \lp 1 + \varepsilon \rp \dfrac{\pi^{\frac 32} e^\mu}{12 \mu^{\frac 32}}.
\end{align}

Since $e^{-\mu} \mu^{- \frac 32}$ is a decreasing function of $\mu$, the fact that $\tilde \mu \geq \mu$ implies
\begin{align*}
    \dfrac{\pi^{\frac 32} e^{-\tilde \mu}}{6 \tilde \mu^{\frac 32}} + \lp 1 + \varepsilon \rp  \dfrac{\pi^{\frac 32} e^{-\mu}}{6 \mu^{\frac 32}} \leq \lp 2 + \varepsilon \rp \dfrac{\pi^{\frac 32} e^{-\mu}}{6 \mu^{\frac 32}},
\end{align*}
and likewise the fact that $e^\mu \mu^{- \alpha}$ is increasing for $\mu > \alpha$ implies
\begin{align*}
    \dfrac{\pi^{\frac 32} e^{\tilde \mu}}{6 \tilde \mu^{\frac 52}} + \lp 1 + \varepsilon \rp \dfrac{\pi^{\frac 32} e^\mu}{6 \mu^{\frac 52}} \leq \lp 2 + \varepsilon \rp \dfrac{\pi^{\frac 32} e^{\tilde \mu}}{6 \tilde \mu^{\frac 52}}
\end{align*}
and
\begin{align*}
    \dfrac{11 e^{\tilde \mu}}{10 \tilde \mu^2} + \lp 1 + \varepsilon \rp \dfrac{11 e^\mu}{10 \mu^2} \leq \lp 2 + \varepsilon \rp \dfrac{11 e^{\tilde \mu}}{10 \tilde \mu^2}.
\end{align*}
Therefore, to prove \eqref{ineq3} we need only prove the inequality
\begin{align*}
    \dfrac{\pi^{\frac 32} e^{\tilde \mu}}{12 \tilde \mu^{\frac 32}} + \lp 2 + \varepsilon \rp \dfrac{\pi^{\frac 32} e^{\tilde \mu}}{6 \tilde \mu^{\frac 52}} + \lp 2 + \varepsilon \rp \dfrac{11 e^{\tilde \mu}}{10 \tilde \mu^2} + \lp 2 + \varepsilon \rp \dfrac{\pi^{\frac 32} e^{-\mu}}{6 \mu^{\frac 32}} < \lp 1 + \varepsilon \rp \dfrac{\pi^{\frac 32} e^\mu}{12 \mu^{\frac 32}}.
\end{align*}
Define functions $F_j(n)$, $1 \leq j \leq 4$, and $M(n)$ so that the above inequality reads
\begin{align*}
    F_1(n) + F_2(n) + F_3(n) + F_4(n) < M(n).
\end{align*}
$F_1(n)$ is asymptotically larger than $F_j(n)$ for $2 \leq j \leq 4$, and so it is natural to simplify further by bounding $F_j(n)$, $2 \leq j \leq 4$ with small multiples of $F_1(n)$. For any choice of positive real numbers $A, B, C$, it is straightforward to show that $F_2(n) < \frac{\varepsilon}{A} F_1(n)$ for all
\begin{align*}
    n > N_A = N_A(\varepsilon, L) := \frac{12 A^2 \lp 2 + \varepsilon \rp^2}{\pi^2 \varepsilon^2} - L - \frac{1}{24},
\end{align*}
that $F_3(n) < \frac{\varepsilon}{B} F_1(n)$ for all 
\begin{align*}
    n > N_B = N_B(\varepsilon, L) := \frac{910787328 B^4 \lp 2 + \varepsilon \rp^4}{10000 \pi^8 \varepsilon^4} - L - \frac{1}{24},
\end{align*}
and that $F_4(n) < \frac{\varepsilon}{C} F_1(n)$ for
\begin{align*}
    n > N_C = N_C(\varepsilon, L) := \frac{3}{4\pi^2}\log^2\lp \frac{2C(2+\varepsilon)(1+L)}{\varepsilon} \rp - \frac{1}{24}.
\end{align*}
The calculation of $N_C$ above uses the assumptions $n \geq 1$ and $\tilde \mu \geq \mu$. We have reduced our problem to proving that
\begin{align} \label{Reduced EQ}
    \lp 1 + D \varepsilon \rp \dfrac{\pi^{\frac 32} e^{\tilde \mu}}{12 \tilde \mu^{\frac 32}} < \lp 1 + \varepsilon \rp \dfrac{\pi^{\frac 32} e^\mu}{12 \mu^{\frac 32}}
\end{align}
for sufficiently large $n$, where 
$$D = D(A,B,C):=\frac{1}{A}+\frac{1}{B}+\frac{1}{C}.$$
Because the choice of $A,B,C$ is made freely, we make these choices so that $0<D<1$, which is required for \eqref{Reduced EQ} to hold for sufficiently large $n$.

Let $\tilde n := 24n+1$ for convenience.  By elementary manipulations, \eqref{Reduced EQ} is equivalent to
\begin{align*}
    \lp \dfrac{\tilde n}{\tilde n + 24L} \rp^{\frac 34} e^{\frac{\pi}{6\sqrt{2}}\left[ \sqrt{\tilde n + 24L} - \sqrt{\tilde n} \right]} < \dfrac{1+\varepsilon}{1 + D\varepsilon }.
\end{align*}
Since $\tilde n > 0$ and $L > 0$, we have $\frac{\tilde n}{\tilde n + 24L} < 1$ and $\sqrt{\tilde n + 24L} - \sqrt{\tilde n} = \frac{24L}{\sqrt{\tilde n + 24L} + \sqrt{\tilde n}} \leq \frac{12L}{\sqrt{\tilde n}}$. Thus, it is enough to prove that for large enough $n$ we have
\begin{align*}
    e^{\frac{\sqrt{2} L \pi}{\sqrt{\tilde n}}} < \dfrac{1+\varepsilon}{1 + D\varepsilon },
\end{align*}
which holds if and only if
 $$  n>
 N_D = N_{D}(\varepsilon,L):=\frac{ L^2 \pi^2}{{12}\log^2 \left( \frac{\displaystyle 1+\varepsilon}{\displaystyle 1 +  \displaystyle \varepsilon \displaystyle D }\right)}-\frac{1}{24}. $$

Therefore,  we conclude that the inequality in Proposition \ref{q(n) Rademacher Bounds} holds for all $n > N_{A,B,C}(\varepsilon,L)$, where
\begin{align} \label{N-construction}
      N_{A,B,C}(\varepsilon,L) :=  \max\lp N_A, N_B, N_C,N_D, 26 \rp.
\end{align}

\subsubsection{Proof of Theorem \ref{q(n) Katriel}} 

Since $q(n)>0$ for all $n\geq 0$, 
\eqref{Katriel Equation} is equivalent to \begin{align}\label{rewrite}
    \sum_{k=1}^r \alpha_k \dfrac{q\lp n + \mu_k \rp}{q(n)} \leq \sum_{\ell = 1}^s \beta_\ell \dfrac{q\lp n + \nu_\ell \rp}{q(n)}.
\end{align} 
Moreover, since for all $n\geq 0$, we have $q(n+\nu_\ell)/q(n)\geq 1$ for all $1\leq \ell\leq s$ and $q(n+\mu_k)\leq q(n+\mu_r)$ for all $1\leq k \leq r$, inequality \eqref{Katriel Equation}  holds for all $n$ such that \begin{align*}
    \lp \sum_{k=1}^r \alpha_k \rp \lp \dfrac{q\lp n + \mu_r \rp}{q(n)} - 1 \rp \leq \sum_{\ell = 1}^s \beta_\ell - \sum_{k=1}^r \alpha_k.
\end{align*}
Let $L:=\mu_r$ and \begin{align}\label{eqn_ep19}
    \varepsilon := \lp \sum_{k=1}^r \alpha_k \rp^{-1} \lp \sum_{\ell = 1}^s \beta_\ell - \sum_{k=1}^r \alpha_k \rp.
\end{align}

Choose $A,B,C>0$ such that $\frac{1}{A}+\frac{1}{B}+\frac{1}{C}<1$ and define
\begin{equation}\label{eqn:Nmanysubscripts}
N_{\boldsymbol{\alpha},\boldsymbol{\beta},\mu_r}:= N_{A,B,C}(\varepsilon, L)
\end{equation}
where  $N_{A,B,C}(\varepsilon, L)$ is given by \eqref{N-construction} for the above choice of $\varepsilon$ and $L$. The proof then follows from Proposition \ref{q(n) Rademacher Bounds}. \qed

\bigskip

\begin{remark} We point out that in practice, it may be possible to improve (i.e. decrease) the bound $N_{\boldsymbol{\alpha},\boldsymbol{\beta},\mu_r}$ obtained in the proof of Theorem \ref{q(n) Katriel} above.  For example, at the outset, any summands $\alpha_k q(n+\mu_k)$ and $\beta_\ell q(n+\nu_\ell)$ appearing in an inequality \eqref{Katriel Equation} such that $\alpha_k = \beta_\ell$ and $\mu_k = \nu_\ell$ can be canceled, producing an equivalent inequality with a larger $\varepsilon$ in \eqref{eqn_ep19}.  Similarly, in case of any summands $\alpha_k q(n+\mu_k)$ and $\beta_\ell q(n+\nu_\ell)$ with $\alpha_k\neq \beta_\ell$ but $\mu_k = \nu_\ell$, one may subtract $\min(\alpha_k,\beta_\ell) q(n+\mu_k)$ from both sides of \eqref{Katriel Equation} yielding an equivalent inequality with  potentially   smaller   $L$ and larger   $\varepsilon$ as chosen in the proof of Theorem \ref{q(n) Katriel} above.  Further, given $L, \varepsilon$ as in the proof of Theorem \ref{q(n) Katriel}, one may seek to choose appropriate $A,B,$ and $C$   such that $N_{\boldsymbol{\alpha},\boldsymbol{\beta},\mu_r}$ in \eqref{eqn:Nmanysubscripts} is minimized. \end{remark}

\subsection{General linear inequalities for distinct partitions without small parts}

In our proof of Theorem \ref{diff} for $t=3$, we require a linear partition inequality for the $\rho(n,m)$, which counts partitions of $n$ into distinct parts all at least $m$. To prove this inequality, we make use of the following result of  Erd\H{o}s-Nicolas-Szalay \cite{ENS} which relates $q(n)$ and $\rho(n,m)$.

\begin{theorem}[Theorem 1, \cite{ENS}] \label{thm_ENS} For all $n$ and $m$ satisfying $1\leq m \leq n,$ we have that 
\begin{align}\label{eqn_erdthm} \frac{q(n)}{2^{m-1}} \leq \rho(n,m) \leq \frac{q(n + m(m-1)/2)}{2^{m-1}}.
\end{align}
\end{theorem}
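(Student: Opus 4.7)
The plan is to reduce both inequalities to a single exact identity between $q(n)$ and $\rho(n,m)$, together with a simple monotonicity statement for $\rho(n,m)$. Writing $\sigma(S) := \sum_{j \in S} j$ for $S \subseteq \{1, 2, \ldots, m-1\}$, I would first establish the identity
\begin{equation*}
q(n) = \sum_{S \subseteq \{1,\ldots,m-1\}} \rho\big(n - \sigma(S),\, m\big),
\end{equation*}
with the convention $\rho(k,m)=0$ for $k<0$.  This identity can be read off from the factorization $\prod_{k\geq 1}(1+q^k) = \prod_{k=1}^{m-1}(1+q^k)\cdot\prod_{k\geq m}(1+q^k)$ by expanding the first product as $\sum_S q^{\sigma(S)}$ and comparing coefficients of $q^n$; combinatorially, each distinct partition of $n$ is determined by its set of parts $<m$ (encoded by a subset $S\subseteq\{1,\ldots,m-1\}$) together with the distinct partition of $n-\sigma(S)$ formed by its parts $\geq m$.

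The second ingredient is the monotonicity $\rho(k+1,m) \geq \rho(k,m)$ for all $k\geq m$.  This follows at once from the injection $(\lambda_1 > \lambda_2 > \cdots > \lambda_\ell) \mapsto (\lambda_1+1,\lambda_2,\ldots,\lambda_\ell)$, which sends a partition counted by $\rho(k,m)$ to one counted by $\rho(k+1,m)$ and preserves both distinctness (since $\lambda_1+1>\lambda_1>\lambda_2$) and the lower bound $\geq m$ on all parts.

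With these two facts in hand, both inequalities of \eqref{eqn_erdthm} fall out immediately.  For the lower bound, every summand in the identity satisfies $\rho(n-\sigma(S),m)\leq\rho(n,m)$: when $n-\sigma(S)\geq m$ this is iterated application of the monotonicity lemma, when $n-\sigma(S)=0$ it reads $1\leq\rho(n,m)$ which holds since $n\geq m$ implies the partition $(n)$ is counted, and when $0<n-\sigma(S)<m$ or $n-\sigma(S)<0$ the summand is simply $0$.  Summing the identity over all $2^{m-1}$ subsets yields $q(n)\leq 2^{m-1}\rho(n,m)$.  For the upper bound, apply the identity at $n+m(m-1)/2$ in place of $n$; since $\sigma(S)\leq m(m-1)/2$, every argument $n+m(m-1)/2-\sigma(S)\geq n\geq m$, so the monotonicity lemma gives $\rho(n+m(m-1)/2-\sigma(S),m)\geq\rho(n,m)$, and summing over the $2^{m-1}$ subsets yields $q(n+m(m-1)/2)\geq 2^{m-1}\rho(n,m)$.

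No serious obstacle is anticipated: the proof is essentially a clean Eulerian argument.  The only subtlety is that $\rho(\cdot,m)$ is \emph{not} globally non-decreasing (indeed $\rho(0,m)=1$ but $\rho(k,m)=0$ for $0<k<m$), so the lower-bound estimate requires the brief boundary case analysis sketched above, which is made possible by the hypothesis $n\geq m$.
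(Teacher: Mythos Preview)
Your proof is correct. Note, however, that the paper does not give its own proof of this statement: it is quoted as Theorem~1 of Erd\H{o}s--Nicolas--Szalay \cite{ENS} and invoked as a black box in the derivation of Corollary~\ref{rho(n,m) Katriel}. There is therefore nothing in the present paper to compare your argument against. For what it is worth, your approach---the exact identity $q(n)=\sum_{S\subseteq\{1,\ldots,m-1\}}\rho(n-\sigma(S),m)$ coming from the factorization $(-q;q)_\infty=\prod_{k=1}^{m-1}(1+q^k)\cdot\prod_{k\geq m}(1+q^k)$, combined with the elementary monotonicity $\rho(k+1,m)\geq\rho(k,m)$ for $k\geq m$---is clean and complete, and your handling of the boundary cases $0\leq n-\sigma(S)<m$ (where global monotonicity fails) using the hypothesis $n\geq m$ is exactly what is needed.
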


General linear inequalities for $\rho(n,m)$ then follow from Theorems \ref{q(n) Katriel} and \ref{thm_ENS}. We make this explicit in the following corollary.

\begin{corollary} \label{rho(n,m) Katriel} {
Suppose $\sum_{k=1}^r \alpha_k < \sum_{\ell=1}^s \beta_\ell$, where $\boldsymbol{\alpha} =\{\alpha_k\}_{k=1}^r$ and $\boldsymbol{\beta} = \{\beta_\ell\}_{\ell=1}^s$ are sequences of positive real numbers, and $r,s \in \mathbb N$.  Moreover, let $\{\mu_k\}_{k=1}^r, \{\nu_\ell\}_{\ell=1}^s \subset \mathbb N_0$, where $\mu_1 < \mu_2 < \cdots < \mu_r$ and $\nu_1 < \nu_2 < \cdots < \nu_s$.
Then for any $m\in \mathbb N$, there exists an $N$ (which depends on $\boldsymbol{\alpha},\boldsymbol{\beta},\mu_r,$ and $m$) such that for $n>N$,
\begin{align} \label{rho Katriel Equation}
    \sum_{k=1}^r \alpha_k \rho(n+\mu_k,m) \leq \sum_{\ell=1}^s \beta_\ell \rho(n+\nu_\ell,m).
\end{align}  Specifically, we may take $N=N_{\boldsymbol{\alpha},\boldsymbol{\beta},\mu_r + m(m-1)/2}$ as in \eqref{eqn:Nmanysubscripts}.}

\end{corollary}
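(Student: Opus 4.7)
The plan is to combine Theorem \ref{q(n) Katriel} with the two-sided bounds of Erd\H{o}s--Nicolas--Szalay (Theorem \ref{thm_ENS}) to reduce the asserted linear inequality for $\rho(n,m)$ to a corresponding linear inequality for $q(n)$. First, I would upper bound each summand on the left-hand side of \eqref{rho Katriel Equation} by invoking the right inequality in \eqref{eqn_erdthm}, namely $\rho(n+\mu_k,m) \leq q(n+\mu_k+m(m-1)/2)/2^{m-1}$, and lower bound each summand on the right-hand side via the left inequality in \eqref{eqn_erdthm}, namely $\rho(n+\nu_\ell,m) \geq q(n+\nu_\ell)/2^{m-1}$. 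After clearing the common factor $2^{m-1}$, it therefore suffices to prove
$$\sum_{k=1}^r \alpha_k \, q\bigl(n+\mu_k+m(m-1)/2\bigr) \leq \sum_{\ell=1}^s \beta_\ell \, q(n+\nu_\ell)$$
for all sufficiently large $n$.

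Next, I would view this as an instance of Theorem \ref{q(n) Katriel} applied to the shifted subscript sequence $\widetilde{\mu}_k := \mu_k + m(m-1)/2$, $1 \leq k \leq r$, with $\widetilde{\nu}_\ell := \nu_\ell$ unchanged. Since $m(m-1)/2$ is a fixed nonnegative integer, translation by this constant preserves strict monotonicity and keeps the subscripts in $\mathbb{N}_0$, so $\widetilde{\mu}_1 < \widetilde{\mu}_2 < \cdots < \widetilde{\mu}_r$ and $\widetilde{\nu}_1 < \widetilde{\nu}_2 < \cdots < \widetilde{\nu}_s$. The coefficient hypothesis $\sum_{k=1}^r \alpha_k < \sum_{\ell=1}^s \beta_\ell$ is entirely unchanged. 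Thus Theorem \ref{q(n) Katriel} applies and yields the inequality for all $n > N_{\boldsymbol{\alpha},\boldsymbol{\beta},\widetilde{\mu}_r} = N_{\boldsymbol{\alpha},\boldsymbol{\beta},\mu_r + m(m-1)/2}$, exactly matching the bound claimed in the corollary.

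Because both steps are immediate applications of previously established results, no real obstacle arises; the only routine check is that the shift by $m(m-1)/2$ preserves the structural hypotheses of Theorem \ref{q(n) Katriel} on the subscript sequences, which is clear. In particular, the explicit $N$ promised by the corollary is just the explicit $N$ from \eqref{eqn:Nmanysubscripts} evaluated at the shifted largest subscript $\mu_r + m(m-1)/2$.
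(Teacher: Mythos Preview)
Your proof is correct and follows essentially the same approach as the paper: upper bound the left side of \eqref{rho Katriel Equation} via the right inequality in \eqref{eqn_erdthm}, lower bound the right side via the left inequality in \eqref{eqn_erdthm}, cancel the common factor $2^{m-1}$, and apply Theorem \ref{q(n) Katriel} with the shifted subscripts $\mu_k + m(m-1)/2$. The paper's proof is identical in substance, and your explicit verification that the shifted sequence still satisfies the hypotheses of Theorem \ref{q(n) Katriel} matches the paper's remark that one simply replaces $\{\mu_k\}$ with $\{\mu_k + m(m-1)/2\}$.
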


\begin{remark} When $m=1$, Corollary \ref{rho(n,m) Katriel} becomes Theorem \ref{q(n) Katriel}.
\end{remark}

\begin{proof}[{Proof of Corollary \ref{rho(n,m) Katriel}}]
We apply Theorem \ref{thm_ENS} to each $\rho(n+\mu_k,m)$   and obtain
\begin{align}\label{eqn_rhoq1}
 \sum_{k=1}^r \alpha_k \rho(n+\mu_k,m) \leq 2^{1-m} \sum_{k=1}^r \alpha_k q(n+\mu_k+ m(m-1)/2). 
\end{align}
By Theorem \ref{q(n) Katriel}, after replacing $\{\mu_k\}$ with $\{\mu_k +m(m-1)/2\}$, the right-hand side of \eqref{eqn_rhoq1} is at most
\begin{align}\label{eqn_rhoq2} 2^{1-m} \sum_{\ell=1}^s \beta_{\ell} q(n+\nu_\ell)
\end{align} for $n>N_{\boldsymbol{\alpha},\boldsymbol{\beta},\mu_r + m(m-1)/2}$.
Applying Theorem \ref{thm_ENS} again, this time to each $q(n+\nu_\ell)$, we obtain that \eqref{eqn_rhoq2} is at most 
$$\sum_{\ell=1}^s \beta_\ell\rho(n+\nu_\ell,m)$$ for $n>N_{\boldsymbol{\alpha},\boldsymbol{\beta},\mu_r + m(m-1)/2}$
as desired.
\end{proof}

\section{Hooks of length $2$}\label{sec_t=2}

In this section, we prove Theorem \ref{diff} in the case of hooks of length $t=2$.  To do so, we first establish  relevant generating functions   for odd (respectively distinct) partitions in Section \ref{sec_t2odd} (respectively Section \ref{sec_t2dist}).  In Section \ref{sec_t2diff} we use these results to prove Theorem \ref{diff} for $t=2$. We also give a combinatorial interpretation of the excess $a_2(n)-b_2(n)$, in analogy with Theorem \ref{AB}.

First we introduce some notation  used throughout the remainder of the article. The \emph{$q$-Pochhammer symbol} is defined for $n \in \mathbb N_0 \cup \{\infty\}$ by 
$$(a;q)_n := \prod_{j=0}^{n-1} (1-a q^j).$$  We will make use of the following well-known partition generating functions (with $|q|<1$): 
\begin{align*}
\sum_{n=0}^\infty p(n) q^n &= \frac{1}{(q;q)_\infty}, \ \ \ \ 
\sum_{n=0}^\infty q(n) q^n = {(-q;q)_\infty},\smallskip \\
\sum_{n=0}^\infty o(n) q^n &= \frac{1}{(q;q^2)_\infty}, \ \ \ 
\sum_{n=0}^\infty do(n) q^n = {(-q;q^2)_\infty},
\end{align*} where
$o(n)$ equals the number of odd parts partitions of $n$, and $do(n)$ equals the number of distinct odd parts partitions of $n$. 
Since the only partition of $n=0$ is the empty partition, we have $p(0)=q(0)=o(0)=do(0)=1$.

We may abuse notation, and use partition and   Young  diagram interchangeably;  we do the same for parts of a partition and rows of its diagram.  

\subsection{Odd partitions and hooks of length $\mathbf{t=2}$}\label{sec_t2odd}

We establish the generating function for $a_2(n)$. 

\begin{proposition} \label{gf_a2} We have $$\sum_{n\geq 0}a_2(n)q^n=\frac{1}{(q;q^2)_\infty}\left(q^2+\sum_{n\geq 2}(q^{2n-1}+q^{2(2n-1)})\right).
$$
\end{proposition}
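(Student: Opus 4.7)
My plan is to prove the generating function identity by a direct combinatorial classification of the $2$-hooks in an odd partition, writing
$$\sum_{n \geq 0} a_2(n) q^n = \sum_{\lambda \in \mathcal{O}} \bigl|\{h \in \mathcal{H}(\lambda) : h = 2\}\bigr| \, q^{|\lambda|}$$
and counting pairs (odd partition, $2$-hook) in two natural ways.

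A box at position $(i,j)$ has hook length $2$ if and only if $(\lambda_i - j) + (\lambda'_j - i) = 1$, so either (A) $j = \lambda_i - 1$ and $\lambda'_j = i$ (a horizontal $2$-hook sitting at the end of row $i$), or (B) $j = \lambda_i$ and $\lambda'_j = i+1$ (a vertical $2$-hook in column $\lambda_i$). The oddness of all parts provides the crucial simplification: if $\lambda_{i+1} < \lambda_i$ then $\lambda_{i+1} \leq \lambda_i - 2$ automatically (two distinct odd integers differ by at least $2$), so in case (A) the condition $\lambda'_{\lambda_i - 1} = i$ collapses to simply $\lambda_{i+1} \neq \lambda_i$ (combined with $\lambda_i \geq 3$). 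I then plan to read off that, for each odd partition $\lambda$, case (A) contributes exactly one horizontal $2$-hook per distinct odd value $v \geq 3$ appearing in $\lambda$ (placed at the last row of each run of equal parts), and case (B) contributes exactly one vertical $2$-hook per distinct odd value $v$ appearing in $\lambda$ with multiplicity at least $2$ (located in the row $i$ such that $i+1$ is the last row of that run, regardless of how long the run is).

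Summing accordingly splits the desired generating function as $S_1 + S_2$, where $S_1$ is the sum over pairs $(\lambda, v)$ with $v \geq 3$ an odd part of $\lambda$, and $S_2$ is the analogous sum over pairs $(\lambda, v)$ with $v$ an odd value appearing at least twice. For each fixed odd $v$, the odd partitions containing $v$ are in size-preserving bijection with arbitrary odd partitions via removal of one copy of $v$, so that inner generating function equals $q^v / (q;q^2)_\infty$; similarly odd partitions in which $v$ appears at least twice contribute $q^{2v}/(q;q^2)_\infty$. Summing $q^v$ over odd $v \geq 3$ yields $\sum_{n \geq 2} q^{2n-1}$, while summing $q^{2v}$ over all odd $v$ yields $q^2 + \sum_{n \geq 2} q^{2(2n-1)}$, and together these recover the right-hand side of Proposition \ref{gf_a2}.

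The step I expect to require the most care is the combinatorial classification itself: in particular, verifying that a run of equal parts of length $k \geq 2$ contributes exactly one vertical $2$-hook irrespective of $k$ (the unique $2$-hook slides up the column as $k$ grows, but does not proliferate), and similarly that a value $v \geq 3$ contributes exactly one horizontal $2$-hook independent of its multiplicity. Once this bookkeeping is nailed down, the generating function manipulation is routine and yields the claimed identity.
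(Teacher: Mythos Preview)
Your proof is correct and rests on the same combinatorial observation as the paper's: in an odd partition, the number of $2$-hooks equals the number of distinct part sizes greater than $1$ plus the number of distinct part sizes occurring with multiplicity at least $2$. The only difference is in how you extract the generating function from this fact. The paper encodes the hook count into a bivariate generating function
\[
F_2(z;q)=\left(1+q+\frac{zq^2}{1-q}\right)\prod_{n\geq 2}\left(1+zq^{2n-1}+\frac{z^2q^{2(2n-1)}}{1-q^{2n-1}}\right)
\]
and applies $\partial/\partial z|_{z=1}$ via logarithmic differentiation, whereas you count pairs $(\lambda,v)$ directly and use the one-line bijection ``remove one (respectively two) copies of $v$'' to get $q^v/(q;q^2)_\infty$ (respectively $q^{2v}/(q;q^2)_\infty$). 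Your route is slightly more elementary and self-contained; the paper's bivariate formulation has the side benefit of giving a refined generating function tracking the exact number of $2$-hooks, analogous to Han's formula \eqref{eqn_Han1}. Either way, the substance is the same.
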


\begin{proof} Let $a_2(m,n)$  be the number of odd   partitions of $n$ with $m$ hooks of length $2$ and denote by $F_2(z;q)$ the bivariate generating function for the sequence $a_2(m,n)$, i.e., $$F_2(z;q):=\sum_{n,m\geq 0}a_2(m,n)z^mq^n.$$

If $\lambda$ is an odd partition, then by considering the possible ways a hook of length 2 can occur in a Young diagram, we see that the number of hooks of length $2$ in $\lambda$ is equal to the number of different part sizes of $\lambda$ that are greater than $1$ plus the number of different  parts of $\lambda$ that occur at least twice. Therefore we have
\begin{align} \label{eqn_F2zq} F_2(z;q)=\left(1+q+\frac{zq^2}{1-q}\right)\prod_{n=2}^\infty \left(1+zq^{2n-1}+\frac{z^2q^{2(2n-1)}}{1-q^{2n-1}}\right).\end{align}

From the definition of $a_2(m,n)$, we have $$\sum_{n\geq 0}a_2(n)q^n=\frac{\partial}{\partial z}\bigg|_{z=1}F_2(z;q).$$ Using logarithmic differentiation finishes the proof. 
\end{proof}

\subsection{Distinct partitions and hooks of length $\mathbf{t=2}$}\label{sec_t2dist}
We establish the generating function for $b_2(n)$. 

 \begin{proposition} \label{gf_b2} We have \begin{align}\label{eqn_b2}\sum_{n\geq 0}b_2(n)q^n=   \frac{q^2}{1-q}(-q^2;q)_\infty.
\end{align}
 \end{proposition}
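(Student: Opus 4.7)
The plan is to give a direct derivation by summing over "marked" parts, rather than introducing a bivariate generating function $G_2(z;q)$ analogous to the $F_2(z;q)$ of Proposition \ref{gf_a2}: for distinct partitions the contribution of each integer $k$ depends on whether $k-1$ is also a part, so the multiplicative factoring by part size used in the odd case is no longer available, and a sum-over-marked-parts approach is cleaner.

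First I would characterize the length-$2$ hooks of a distinct partition $\lambda$. A box $(i,j)$ has hook length $2$ iff its arm $\lambda_i - j$ and its leg $\lambda'_j - i$ sum to $1$. In the "vertical" case (arm $0$, leg $1$), column $j = \lambda_i$ would have to contain a box in row $i+1$, forcing $\lambda_{i+1} \geq \lambda_i$, which is impossible when all parts are distinct. Hence every length-$2$ hook is "horizontal" (arm $1$, leg $0$) and occurs in row $i$ precisely when $\lambda_i \geq 2$ and $\lambda_i - 1$ is not a part of $\lambda$. Viewing $\lambda$ as its set of parts, this yields
$$b_2(n) \;=\; \sum_{\substack{\lambda \vdash n \\ \lambda \in \mathcal D}} \#\{\,k \in \lambda : k \geq 2,\ k-1 \notin \lambda\,\}.$$
Interchanging the order of summation and singling out the marked part $k$, with the remainder of the partition a distinct partition whose parts avoid both $k-1$ and $k$, this gives
$$\sum_{n \geq 0} b_2(n)\, q^n \;=\; \sum_{k \geq 2} q^k \cdot \frac{(-q;q)_\infty}{(1+q^{k-1})(1+q^k)}.$$

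It then remains to evaluate $S(q) := \sum_{k \geq 2} q^k/[(1+q^{k-1})(1+q^k)]$. I would use the partial-fraction identity
$$\frac{1}{1+q^{k-1}} - \frac{1}{1+q^k} \;=\; \frac{q^{k-1}(q-1)}{(1+q^{k-1})(1+q^k)},$$
which rearranges to $\frac{q^k}{(1+q^{k-1})(1+q^k)} = \frac{q}{q-1}\bigl(\frac{1}{1+q^{k-1}} - \frac{1}{1+q^k}\bigr)$. Summing over $k \geq 2$ telescopes (using $(1+q^k)^{-1} \to 1$ as $k \to \infty$) to $S(q) = \frac{q}{q-1}\bigl(\frac{1}{1+q} - 1\bigr) = \frac{q^2}{1-q^2}$, and therefore
$$\sum_{n \geq 0} b_2(n)\, q^n \;=\; \frac{q^2}{1-q^2}\,(-q;q)_\infty \;=\; \frac{q^2}{1-q}\cdot\frac{(-q;q)_\infty}{1+q} \;=\; \frac{q^2}{1-q}\,(-q^2;q)_\infty,$$
as claimed.

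The main obstacle is the first step: correctly isolating the only hook-of-length-$2$ pattern that can survive in a distinct partition and translating the leg-$0$ condition into the set-theoretic constraint $\lambda_i - 1 \notin \lambda$. Once the vertical case is ruled out, the rest is a routine algebraic manipulation that reduces to the clean telescoping sum above.
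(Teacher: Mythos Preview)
Your argument is correct and takes a genuinely different route from the paper's. The paper defines a bivariate generating function $G_2(z;q)$ tracking the number of length-$2$ hooks, observes (as you do) that this count equals the number of parts $\lambda_i$ with $\lambda_i-\lambda_{i+1}\geq 2$, then performs a Sylvester-triangle subtraction so that this statistic becomes the number of distinct part sizes in an ordinary partition; this leads to
\[
G_2(z;q)=\sum_{n\geq 1}q^{n(n+1)/2}\prod_{j=1}^n\Bigl(1+\tfrac{zq^j}{1-q^j}\Bigr),
\]
which is differentiated at $z=1$ and summed via the $q$-binomial theorem \eqref{q-binomial}. Your approach bypasses both the Sylvester decomposition and the $q$-binomial identity: you mark the part $k$ carrying a length-$2$ hook, factor out $(-q;q)_\infty$ with the two forbidden factors $(1+q^{k-1})(1+q^k)$ removed, and collapse the remaining sum by a one-line telescoping partial fraction. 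This is shorter and more elementary for the stated identity; the paper's method, on the other hand, yields the full refinement $G_2(z;q)$ and parallels the machinery used for $a_2$ and later for $t=3$.
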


 \begin{proof} Let $b_2(m,n)$ be the number of    distinct partitions of $n$ with $m$ hooks of length $2$ and denote by $G_2(z;q)$ the bivariate generating function for the sequence $b_2(m,n)$, i.e., $$G_2(z;q):= \sum_{n,m\geq 0} b_2(m,n)z^mq^n.$$

  In the Young diagram of a distinct partition $\lambda$, the number of hooks of length $2$ in $\lambda$ equals the number of parts $\lambda_i$ such that $\lambda_i-\lambda_{i+1}\geq 2$, where $\lambda_k=0$ if $k>\ell(\lambda$). 
  Thus, the number of hooks of length $2$ in $\lambda$ can be calculated as follows:
   start with the Young diagram of $\lambda$ and   
  remove the \textit{Sylvester triangle},
  i.e., subtract $1$ from the last part, $2$ from the second to last part, etc., to obtain an ordinary partition $\mu$. Then, we count the number of different part sizes in~$\mu$. We note that the number of different part sizes in $\mu$ is equal to the number of different part sizes in its conjugate $\mu'$. Let $u_n(t,m)$ be the number of partitions of $m$ with $t$ different part sizes and all parts at most $n$. Then  \begin{align}\label{eqn_G2zq} G_2(z;q)=\sum_{n\geq 1}q^{n(n+1)/2} \left(\sum_{m,t\geq 0} u_n(t,m) z^tq^m\right)=\sum_{n\geq 1}q^{n(n+1)/2}\prod_{j=1}^n\left(1+\frac{zq^j}{1-q^j}\right).\end{align}
 
 From the definition of $b_2(m,n)$, we have $$\sum_{n\geq 0}b_2(n)q^n=\frac{\partial}{\partial z}\bigg|_{z=1}G_2(z;q).$$ Logarithmic differentiation gives
\begin{equation*}\label{G2-dist}\frac{\partial}{\partial z}\Big|_{z=1}G_2(z;q)=\sum_{n\geq 1}q^{n(n+1)/2}\frac{1}{(q;q)_n}\frac{q-q^{n+1}}{1-q}.
\end{equation*} Using  the well-known limiting case of the $q$-binomial theorem (see e.g. \cite[(2.2.6)]{And}),  
\begin{equation}\label{q-binomial}\sum_{n=0}^\infty  \frac{q^{\frac{n(n+1)}{2}}z^n}{(q;q)_n}= (-zq;q)_\infty\end{equation} with $z=q$, we obtain 
\begin{align*}\sum_{n\geq 0}b_2(n)q^n= & \frac{q}{1-q}\sum_{n\geq 1} \frac{q^{\frac{n^2+n}{2}}}{(q;q)_{n-1}}   =  
\frac{q^2}{1-q}\sum_{n\geq 0} \frac{q^{\frac{n^2+3n}{2}}}{(q;q)_{n}}  =   \frac{q^2}{1-q}(-q^2;q)_\infty.
\end{align*}\end{proof}

\subsection{Proof of Theorem \ref{diff} for hooks of length $\mathbf{t=2}$} \label{sec_t2diff}
Using  Propositions \ref{gf_a2} and \ref{gf_b2},  we have\begin{align*}
\notag \sum_{n\geq 0}& (a_2(n)-b_2(n))q^n =\frac{1}{(q;q^2)_\infty}\left(q^2+\sum_{n\geq 2}(q^{2n-1}+q^{2(2n-1)})\right)- \frac{q^2}{1-q}(-q^2;q)_\infty. \end{align*} Then, using geometric series, Euler's identity \cite[(1.2.5)]{And}, $1/(q;q^2)_\infty=(-q;q)_\infty$, and straightforward manipulations, we obtain \begin{align} \notag \sum_{n\geq 0} (a_2(n)-b_2(n))q^n & =  \frac{1}{(q;q^2)_\infty} \frac{q^2(1+q+q^3)}{1-q^4}- \frac{q^2}{1-q}\frac{(-q;q)_\infty}{1+q}\\ \notag &
=(-q;q)_\infty\frac{q^2}{1-q^2}\left( \frac{1+q+q^3}{1+q^2}-1\right)\\ \notag &
 =(-q;q)_\infty\frac{q^2}{1-q^2}\left( \frac{q-q^2+q^3}{1+q^2}\right) \\ \label{t2diff} 
 &= q^3\frac{1+q^3}{1-q^2}(-q^3;q)_\infty. \end{align} 
 Clearly, the expression in 
 \eqref{t2diff}, expanded as a $q$-series, has non-negative coefficients. This concludes the proof of Theorem \ref{diff} for $t=2$.\qed

 Next we give a combinatorial interpretation for $a_2(n)-b_2(n)$. Here and throughout, for $i \in \mathbb{Z}^+$, we define the multiplicity $m_\lambda(i)$ of $i$ in $\lambda$ to be the number of times  $i$ appears as a part in partition $\lambda$.

\begin{proposition} For $n\geq 0$, $a_2(n)-b_2(n)=w(n)$, where $w(n)$ is the total number of different part sizes greater than $1$ in all odd partitions $\lambda$ of  $n$ with $m_\lambda(1)\equiv 0,3 \pmod 4$.
\end{proposition}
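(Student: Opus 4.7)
The plan is to prove the identity by computing the generating function for $w(n)$ directly and matching it against the closed form for $\sum_{n\geq 0}(a_2(n)-b_2(n))q^n$ already established in \eqref{t2diff}, namely $q^3(1+q^3)(-q^3;q)_\infty/(1-q^2)$.

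First, I would build the bivariate generating function whose coefficient of $z^j q^n$ counts odd partitions $\lambda$ of $n$ with $m_\lambda(1)\equiv 0,3\pmod 4$ and exactly $j$ distinct part sizes strictly greater than $1$. The multiplicities of $1$ contribute a factor
$$\sum_{k\geq 0} q^{4k} + \sum_{k\geq 0} q^{4k+3} = \frac{1+q^3}{1-q^4},$$
since $m_\lambda(1)\in\{0,3,4,7,8,11,12,\dots\}$. Each odd $n\geq 3$ contributes $1+\frac{zq^n}{1-q^n}$, marking with $z$ whether $n$ appears at least once. Thus
$$H(z;q) := \frac{1+q^3}{1-q^4}\prod_{\substack{n\geq 3\\ n\text{ odd}}}\left(1+\frac{zq^n}{1-q^n}\right),$$
and $\sum_{n\geq 0}w(n)q^n = \frac{\partial}{\partial z}\big|_{z=1}H(z;q)$.

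Second, I would apply logarithmic differentiation. Evaluating each factor at $z=1$ gives $1/(1-q^n)$, while the logarithmic derivative of $1+\frac{zq^n}{1-q^n}$ at $z=1$ simplifies to $q^n$. Therefore
$$\sum_{n\geq 0}w(n)q^n = H(1;q)\sum_{\substack{n\geq 3\\ n\text{ odd}}}q^n = \frac{1+q^3}{1-q^4}\cdot\frac{1-q}{(q;q^2)_\infty}\cdot\frac{q^3}{1-q^2},$$
where the factor $\frac{1-q}{(q;q^2)_\infty}$ accounts for $\prod_{n\geq 3,\, n\text{ odd}}(1-q^n)^{-1}$.

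Third, I would simplify this expression to match \eqref{t2diff}. Using Euler's identity $1/(q;q^2)_\infty = (-q;q)_\infty$ together with the factorizations $1-q^4 = (1-q)(1+q)(1+q^2)$ and $(-q;q)_\infty = (1+q)(1+q^2)(-q^3;q)_\infty$, the factors of $(1+q)$ and $(1+q^2)$ cancel, collapsing the expression precisely to $q^3(1+q^3)(-q^3;q)_\infty/(1-q^2)$.

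The only mild obstacle is the bookkeeping around the $m_\lambda(1)$ restriction: one must be careful to verify that the condition $m_\lambda(1)\equiv 0,3\pmod 4$ produces the numerator $1+q^3$ that precisely matches the one appearing in \eqref{t2diff}, which is what makes the final simplification work out so cleanly. Beyond that, the argument is a routine logarithmic-differentiation-and-simplify computation.
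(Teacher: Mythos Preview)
Your proposal is correct and follows essentially the same approach as the paper: define the bivariate generating function $H(z;q)$, apply logarithmic differentiation, and simplify the resulting product to match \eqref{t2diff}. The only differences are cosmetic (indexing odd integers directly as $n$ rather than as $2k+1$, and a slightly different but equivalent order of simplification at the end).
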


\begin{proof} Let $w(m,n)$ be the number of odd partitions $\lambda$ of  $n$ with $m_\lambda(1)\equiv 0,3 \pmod 4$ and exactly $m$ different part sizes greater than $1$. Then, 


$$H(z;q):=\sum_{n,m\geq 0}w(m,n)z^mq^n=\left(\frac{1}{1-q^4}+ \frac{q^3}{1-q^4}\right)\prod_{k=1}^\infty \left(1+\frac{zq^{2k+1}}{1-q^{2k+1}} \right).$$ 

Using this, we compute

\begin{align}\notag\sum_{n\geq0}w(n)q^n=\frac{\partial}{\partial z}\Big|_{z=1}H(z;q)& =\frac{1+q^3}{1-q^4} \prod_{k=1}^\infty \left(1+\frac{q^{2k+1}}{1-q^{2k+1}} \right)\sum_{k=1}^\infty q^{2k+1}\\ \notag  & = \frac{1+q^3}{1-q^4}\cdot \frac{1}{(q^3;q^2)_\infty}\cdot \frac{q^3}{1-q^2} \\ \notag  & = q^3\frac{1+q^3}{1-q^2}\cdot \frac{1-q}{1-q^4}\cdot(-q;q)_\infty\\ \notag  & = q^3\frac{1+q^3}{1-q^2}\cdot(-q^3;q)_\infty,\end{align} 

which, together with \eqref{t2diff}, completes the proof.
\end{proof}

\section{Hooks of length $3$} \label{sec_t=3}
In this section, we prove Theorem \ref{diff} in the case of hooks of length $t=3$.  To do so, we first establish  relevant generating functions   for odd (respectively distinct) partitions in Section \ref{sec_t3odd} (respectively Section \ref{sec_t3dist}).  In Section \ref{sec_t3diff} we use these results as well as results from Section \ref{sec_katriel} to prove Theorem \ref{diff} for $t=3$.   

 First, we introduce some useful notation.  For any partition $\lambda$, denote by $\ell(\lambda)$ the length of $\lambda$, i.e., the number of parts in $\lambda$. We denote by  $\ell_1(\lambda)$ (respectively $\ell_2(\lambda)$)   the number of parts $\lambda_i$ of $\lambda$ with $\lambda_i-\lambda_{i+1}=1$ (respectively $\lambda_i-\lambda_{i+1}=2$).  We assume $\lambda_k=0$ if $k>\ell(\lambda)$.

\subsection{Odd partitions and hooks of length $\mathbf{t=3}$}\label{sec_t3odd} 
We establish the generating function for $a_3(n).$
\begin{proposition}\label{prop_F3gf}
    We have that
    $$  \sum_{n\geq 0} a_3(n) q^n =  (-q^3;q)_\infty \frac{q^3(1+q^3)}{1-q^2} + 
(-q;q)_\infty\left(\frac{q^6}{1-q^4}+\frac{q^3}{1-q^6}\right).$$
\end{proposition}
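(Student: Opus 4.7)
The plan is to follow the structure of Proposition~\ref{gf_a2}: classify every box of hook length $3$ in the Young diagram of an odd partition $\lambda$ by the pair $(\text{arm},\text{leg})$, then convert each type into a generating function and sum. Because a hook of length $3$ satisfies $\text{arm}+\text{leg}=2$, only three types can occur: $(2,0)$, $(1,1)$, and $(0,2)$. I will call the corresponding counts $A(\lambda), B(\lambda)$, and $C(\lambda)$, so that $a_3(n)=\sum_{|\lambda|=n}(A(\lambda)+B(\lambda)+C(\lambda))$.

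The first and most delicate step is an explicit combinatorial description of each type in the odd setting. A type-$(2,0)$ box in row $i$ must satisfy $j=\lambda_i-2$ (so $\lambda_i\geq 3$) and $\lambda_{i+1}\leq \lambda_i-3$; since parts are odd this is equivalent to $k:=\lambda_i\geq 3$ being a part of $\lambda$ with $k-2\notin \lambda$. A type-$(1,1)$ box forces $j=\lambda_i-1\geq 1$ (so $\lambda_i\geq 3$) together with $\lambda_i=\lambda_{i+1}>\lambda_{i+2}$, which corresponds to distinct parts $k\geq 3$ appearing with multiplicity at least $2$. A type-$(0,2)$ box requires $\lambda'_{\lambda_i}-i=2$, i.e., the part $\lambda_i$ appears with multiplicity at least $3$ (with no lower bound on $k$). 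Small examples such as $\lambda=(5,5,5)$, which has three hooks of length $3$, serve to confirm that all three types genuinely occur and must be counted separately.

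Next I would compute the generating function of each count by conditioning on the relevant part $k$ and modifying the standard factors in $1/(q;q^2)_\infty$. For $A$, restricting to $m_k\geq 1$ and $m_{k-2}=0$ and summing over odd $k\geq 3$ yields
\begin{align*}
\sum_\lambda A(\lambda)\, q^{|\lambda|}=\frac{1}{(q;q^2)_\infty}\sum_{\substack{k\geq 3 \\ k\text{ odd}}} q^k(1-q^{k-2})=\frac{1}{(q;q^2)_\infty}\left(\frac{q^3}{1-q^2}-\frac{q^4}{1-q^4}\right).
\end{align*}
For $B$ and $C$, restricting to $m_k\geq 2$ (odd $k\geq 3$) and $m_k\geq 3$ (odd $k\geq 1$) respectively yields $\frac{q^6}{(q;q^2)_\infty(1-q^4)}$ and $\frac{q^3}{(q;q^2)_\infty(1-q^6)}$. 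Applying Euler's identity $1/(q;q^2)_\infty=(-q;q)_\infty$ shows these already match the $B$ and $C$ summands on the right-hand side of the proposition.

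The remaining step is to simplify the $A$ contribution. Combining fractions gives $q^3(1+q^2)-q^4=q^3(1-q+q^2)$; then the factorization $(-q;q)_\infty=(1+q)(1+q^2)(-q^3;q)_\infty$ cancels against $(1-q^2)(1+q^2)$ in the denominator, and the identity $1-q+q^2=(1+q^3)/(1+q)$ produces the desired form $(-q^3;q)_\infty\cdot\frac{q^3(1+q^3)}{1-q^2}$. Adding the three pieces yields the proposition. The hardest part of the argument is the combinatorial classification above: case $(1,1)$ is easy to overlook because the box must lie strictly inside a row (forcing $\lambda_i\geq 3$ in the odd setting), and case $(2,0)$ couples adjacent odd parts of $\lambda$, which is why $F_3(z;q)$ does not admit a clean product formula as in the $t=2$ case and we compute $a_3(n)$'s generating function directly rather than by logarithmic differentiation of a bivariate one.
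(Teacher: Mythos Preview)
Your proof is correct, and it takes a genuinely different route from the paper's.

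The paper does not classify hooks by $(\text{arm},\text{leg})$ type. Instead it observes that in an odd partition, a part size $u\ge 3$ carries a hook of length $3$ in its last, second-to-last, and third-to-last occurrence (multiplicity permitting), \emph{except} that the last occurrence fails when $\lambda_i-\lambda_{i+1}=2$; part $1$ contributes only via its third-to-last occurrence. This leads to the decomposition
\[
a_3(n)=\sum_{k\ge 0}k\,x(k,n)-\sum_{k\ge 0}k\,y(k,n),
\]
where $x$ tracks $\delta_{m_1\ge 3}+\sum_{u\ge 3}(\delta_{m_u\ge 1}+\delta_{m_u\ge 2}+\delta_{m_u\ge 3})$ and $y$ tracks the gap statistic $\ell_2(\lambda)=\#\{i:\lambda_i-\lambda_{i+1}=2\}$. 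The bivariate generating function $\mathcal F_2(z;q)$ for $\ell_2$ is then computed via the conjugate of the $2$-modular diagram, and logarithmic differentiation yields the same final expression you obtained.

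Your $(\text{arm},\text{leg})$ classification is equivalent at the combinatorial level---your $B$ and $C$ match the paper's $\delta_{m_u\ge 2}$ and $\delta_{m_u\ge 3}$ terms, and your $A$ is precisely $\sum_{u\ge 3}\delta_{m_u\ge 1}-\ell_2(\lambda)$, since for odd partitions $\ell_2(\lambda)=\#\{k\ge 3\text{ odd}:k\in\lambda,\ k-2\in\lambda\}$. The payoff of your approach is that it bypasses the $2$-modular diagram computation entirely and never needs a bivariate generating function: each $(\text{arm},\text{leg})$ type contributes a simple factor modification to $1/(q;q^2)_\infty$. The payoff of the paper's route is that the auxiliary statistic $\ell_2(\lambda)$ and its generating function are of independent interest and are reused later (both for $b_3(n)$ in Section~\ref{sec_t3dist} and for the gap-bias results in Section~\ref{Further bias}).
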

\begin{proof} 
Let $\lambda$ be an odd partition.  Among parts equal to $1$,  there is a hook of length $3$ only in the third to last part equal to $1$ (thus, only if the multiplicity of $1$ is at least three).  For all other parts, there is a hook of length 3 in the second to last and third to last occurrence of the part size (if the multiplicity of the part permits). There is also a hook of length 3 in the last occurrence of the part size if that last occurrence is in row~$i$ with $\lambda_i-\lambda_{i+1}\neq 2$. For example, the hooks of length $3$ are marked in the Young diagram of $(7, 7, 7, 7, 3, 3,1,1,1,1)$. $${\footnotesize \begin{ytableau} \ & \ & \ &\ &\ &\ &\ \\ \ & \ & \ &\ &\ &\ & 3 \\ \ & \ & \ &\ &\ &3 &\ \\ \ & \ & \ &\ &3 &\ &\ \\ \ & 3& \ \\ \ & \ & \ \\ \ \\ 3 \\ \ \\ \ \end{ytableau}}$$

Let $a_3(m,n)$   be the number of odd    partitions of $n$ with $m$ hooks of length $3$. It follows from above that $a_3(m,n)$ is the number of odd partitions $\lambda$ of $n$ such that $$\delta_{m_\lambda(1)\geq 3}+\bigg(\sum_{u\geq 2}(\delta_{m_\lambda(u)\geq 1}+\delta_{m_\lambda(u)\geq 2}+\delta_{m_\lambda(u)\geq 3})\bigg)-\ell_2(\lambda)=m.$$ Here and throughout, $\delta_{\rho}$ denotes the Kronecker delta symbol, which evaluates to $1$ if property $\rho$ is true, and $0$ if not.  

If   $x(k,n)$  is the number of odd partitions $\lambda$ of $n$ such that $$\delta_{m_\lambda(1)\geq 3}+\sum_{u\geq 2}(\delta_{m_\lambda(u)\geq 1}+\delta_{m_\lambda(u)\geq 2}+\delta_{m_\lambda(u)\geq 3})=k,$$  and $y(k,n)$ is the number of odd partitions $\lambda$ of $n$ with $\ell_2(\lambda)=k$, then $$a_3(n)=\sum_{m\geq 0}m\, a_3(m,n)= \sum_{k\geq 0}k\, x(k,n) - \sum_{k\geq 0}k\, y(k,n).$$ 
Let $\displaystyle \mathcal F_1(z;q): =\sum_{n,k\geq 0} x(k,n)z^kq^n$ and $\displaystyle \mathcal F_2(z;q):=\sum_{n,k\geq 0} y(k,n)z^{k}q^n$. Then, $$\sum_{n\geq 0}  a_3(n) q^n =\frac{\partial}{\partial z}\Big|_{z=1}\mathcal{F}_1(z;q)-\frac{\partial}{\partial z}\Big|_{z=1}\mathcal{F}_2(z;q).$$
By standard arguments, we have that
\begin{align*}
\mathcal F_1(z;q)=\left(1+q+q^2+\frac{zq^3}{1-q}\right)\prod_{n\geq 1}\left(1+zq^{2n+1}+ z^2q^{2(2n+1)} +\frac{z^3q^{3(2n+1)}}{1-q^{2n+1}}\right).\end{align*} 
To find $\mathcal F_2(z;q)$,  we consider  the conjugate of the $2$-modular diagram of an odd partition. The $2$-{modular diagram} of an odd partition $\lambda$ is a Young diagram in which row $i$ has $ \frac{\lambda_i+1}{2}$ boxes, with the first box filled with $1$, and the remaining boxes filled with $2$. Then the sum of the entries in row~$i$ equals $\lambda_i$. 
 See the left diagram in Figure \ref{fig:2modfig2}.
 
Given an odd partition $\lambda$ of $n$, if a part $\lambda_i$ satisfies $\lambda_i-\lambda_{i+1}=2$, then the $i$-th row in the $2$-modular diagram of $\lambda$ has last box filled with $2$ and is exactly one box longer than the next row. To find $\ell_2(\lambda)$, we conjugate the $2$-modular diagram of $\lambda$, and in it 
 find the number of   rows with multiplicity $1$ among the rows filled with $2$ that are strictly shorter than the first row. For example, 
the $2$-modular diagram of $\lambda=(11,9,5,3)$ with marked relevant rows  and its conjugate are shown in Figure \ref{fig:2modfig2}. Here, $\ell_2(\lambda)=2$. 

\begin{figure}[H]
	\centering
    {\footnotesize $\begin{ytableau} 1&2&{2}&2&2&{\color{red}\mathbf{2}}\\1&2&{2}&2&2\\1&2&{\color{red}\mathbf{2}}\\1&2\end{ytableau}$ \qquad \qquad  $\begin{ytableau} 1&1&1&1\\2&2&2&2\\2&2&{\color{red}\mathbf{2}}\\2&2\\2&2\\{\color{red}\mathbf{2}}\end{ytableau}$}
    \caption{2-modular diagram and conjugate for $\lambda=(11,9,5,3)$.}
    \label{fig:2modfig2}
\end{figure}

Thus, $$\mathcal{F}_2(z;q)=\sum_{n\geq 1} \frac{q^n}{1-q^{2n}}\prod_{j=1}^{n-1}\left(1+zq^{2j}+\frac{q^{2(2j)}}{1-q^{2j}}\right).$$

We use logarithmic differentiation to  obtain
\begin{align*}\frac{\partial}{\partial z}\Big|_{z=1}\mathcal F_1(z;q)& =\frac{1}{(q;q^2)_\infty}\left(q^3+\sum_{n\geq 1}(q^{2n+1}+q^{2(2n+1)}+ q^{3(2n+1)}) \right)\\ &  =(-q;q)_\infty\left(q^3+ \frac{q^3}{1-q^2}+\frac{q^6}{1-q^4}+\frac{q^9}{1-q^6}\right)
\end{align*}
and 
\begin{align*}\frac{\partial}{\partial z}\Big|_{z=1}\mathcal F_2(z;q)& =\sum_{n\geq 1} q^n\frac{1}{(q^2;q^2)_n}\sum_{j=1}^{n-1}q^{2j}(1-q^{2j}). \end{align*} Using the well-known identity $\displaystyle\sum_{n\geq 0}  {z^n}/{(q;q)_n}={1}/{(z;q)_\infty}$ (see e.g. \cite[(2.2.5)]{And})  and the geometric series identity, after simplifying, we further
obtain that 
 \begin{align*}\frac{\partial}{\partial z}\Big|_{z=1}\mathcal F_2(z;q)& = \frac{1}{(q;q^2)_\infty}\left(\frac{q^2}{1-q^2}-\frac{q^4}{1-q^4}+\frac{(1+q)(1+q^3)}{1-q^4}-\frac{1+q}{1-q^2} \right)\\ &  =\frac{1}{(q;q^2)_\infty}\frac{q^4}{1-q^4}. \end{align*}
 
Finally, we have
\begin{align} \sum_{n\geq 0}  a_3(n) q^n &= \frac{\partial}{\partial z}\Big|_{z=1}\mathcal{F}_1(z;q)-\frac{\partial}{\partial z}\Big|_{z=1}\mathcal{F}_2(z;q) \notag \\ &=(-q;q)_\infty\left(q^3+ \frac{q^3}{1-q^2}+\frac{q^6}{1-q^4}+\frac{q^9}{1-q^6} - \frac{q^4}{1-q^4}\right) \notag \\
&=  (-q^3;q)_\infty \frac{q^3(1+q^3)}{1-q^2} + 
(-q;q)_\infty\left(\frac{q^6}{1-q^4}+\frac{q^3}{1-q^6}\right). \label{eqn_genF3a3}
\end{align}
\end{proof}

\subsection{Distinct partitions and hooks of length $\mathbf{t=3}$}\label{sec_t3dist}  We establish the generating function for $b_3(n).$
\begin{proposition}\label{prop_G3gf}
    We have that
    $$ \sum_{n\geq 0} b_3(n) q^n = (-q;q)_\infty \sum_{m\geq 2}\frac{q^m}{1+q^m}-\frac{q^2}{1-q^2}(-q^3;q)_\infty .$$
\end{proposition}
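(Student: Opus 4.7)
The plan is to adapt the approach of Proposition \ref{prop_F3gf}: first determine exactly where hooks of length $3$ can occur in a distinct partition, then encode the resulting count in terms of adjacent-part data, and finally simplify the associated $q$-series using a telescoping identity. A hook of length $3$ at position $(i,j)$ in a Young diagram has $(\mathrm{arm},\mathrm{leg}) \in \{(2,0),(1,1),(0,2)\}$, but in a distinct partition the case $(0,2)$ is impossible since it would force $\lambda_{i+2}\geq\lambda_i$. With the convention $\lambda_{\ell(\lambda)+1}=0$, a case analysis of $\lambda_{i+1}$ would show that row $i$ contributes exactly one hook of length $3$ when $\lambda_i\geq 2$ and $\lambda_{i+1}=\lambda_i-1$, or when $\lambda_i\geq 3$ and $\lambda_{i+1}\leq\lambda_i-3$, and no hook of length $3$ otherwise. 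These two conditions are disjoint, and together they cover every $\lambda_{i+1}<\lambda_i$ with $\lambda_i\geq 2$ except the single value $\lambda_{i+1}=\lambda_i-2$. I would therefore rewrite the total number of hooks of length $3$ in $\lambda$ as the compact difference
\begin{equation*}
    \#\{i:\lambda_i\geq 2\} \;-\; \#\{i:\lambda_i\geq 2,\ \lambda_{i+1}=\lambda_i-2\}.
\end{equation*}

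Summing the first term over all distinct partitions and marking the part $m=\lambda_i$, the generating function for distinct partitions containing $m$ is $q^m(-q;q)_\infty/(1+q^m)$, producing the contribution $(-q;q)_\infty\sum_{m\geq 2}\tfrac{q^m}{1+q^m}$. For the subtracted term I would split according to whether $\lambda_i=2$ with $i=\ell(\lambda)$ (so $\lambda_{i+1}=0=\lambda_i-2$) or $\lambda_i\geq 3$ with $\lambda_{i+1}=\lambda_i-2\geq 1$. The boundary case contributes $q^2(-q^3;q)_\infty$, since such $\lambda$ are precisely the distinct partitions whose smallest part equals $2$. For the interior case, parametrizing by $a=\lambda_i\geq 3$ and using the identity $(-q;q)_{a-3}(-q^{a+1};q)_\infty=(-q;q)_\infty/[(1+q^{a-2})(1+q^{a-1})(1+q^a)]$ yields
\begin{equation*}
    (-q;q)_\infty \sum_{a\geq 3}\frac{q^{2a-2}}{(1+q^{a-2})(1+q^{a-1})(1+q^a)}.
\end{equation*}

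The main obstacle is collapsing this triple-product sum into a recognizable closed form. The crux is the telescoping identity
\begin{equation*}
    \frac{q^{2b}}{(1+q^b)(1+q^{b+1})(1+q^{b+2})} \;=\; \frac{1}{1-q^2}\left[\frac{q^{2b}}{(1+q^b)(1+q^{b+1})} - \frac{q^{2b+2}}{(1+q^{b+1})(1+q^{b+2})}\right],
\end{equation*}
which can be verified directly by clearing denominators. After reindexing $b=a-2$, the sum for $b\geq 1$ telescopes to $\tfrac{q^2}{(1-q^2)(1+q)(1+q^2)}$, so the interior contribution becomes $\tfrac{q^4(-q^3;q)_\infty}{1-q^2}$ upon applying $(-q;q)_\infty/[(1+q)(1+q^2)]=(-q^3;q)_\infty$. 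Combining with the boundary $q^2(-q^3;q)_\infty$ over the common denominator $1-q^2$ produces exactly $\tfrac{q^2(-q^3;q)_\infty}{1-q^2}$, and subtracting this from the first term gives the claimed generating function.
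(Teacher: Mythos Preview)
Your proof is correct. The combinatorial setup---that row $i$ of a distinct partition contributes a hook of length $3$ precisely when $\lambda_i\geq 2$ and $\lambda_i-\lambda_{i+1}\neq 2$---matches the paper's, and your treatment of the first term $(-q;q)_\infty\sum_{m\geq 2}\tfrac{q^m}{1+q^m}$ is essentially identical to theirs.

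Where you diverge is in evaluating the $\ell_2$-contribution. The paper removes the Sylvester triangle from each $\lambda\in\mathcal D(n)$ to obtain an ordinary partition $\mu$, observes that $\ell_2(\lambda)=\ell_1(\mu)=\widetilde{\ell}(\mu')$, builds the bivariate generating function $\mathcal G_2(z;q)=\sum_{m\geq 1}q^{m(m+1)/2}\prod_{j=1}^m\bigl(1+zq^j+\tfrac{q^{2j}}{1-q^j}\bigr)$, and then after logarithmic differentiation applies the $q$-binomial theorem twice to reach $\tfrac{q^2}{1-q^2}(-q^3;q)_\infty$. You instead mark directly the gap-$2$ pair $(a,a-2)$ (with boundary case $a=2$), obtaining a sum over triple products $(1+q^{a-2})(1+q^{a-1})(1+q^a)$ in the denominator, and collapse it via the partial-fraction telescoping identity
\[
\frac{q^{2b}}{(1+q^b)(1+q^{b+1})(1+q^{b+2})}
=\frac{1}{1-q^2}\left[\frac{q^{2b}}{(1+q^b)(1+q^{b+1})}-\frac{q^{2b+2}}{(1+q^{b+1})(1+q^{b+2})}\right].
\]
Your route is more elementary and self-contained: it avoids the Sylvester-triangle detour and the $q$-binomial theorem entirely, at the cost of needing to spot this telescoping identity. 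The paper's route, by contrast, places the calculation in a framework (extracting $\partial_z|_{z=1}$ from a $z$-refined generating function) that parallels its treatment of $a_3(n)$ and $b_2(n)$, and the Sylvester-triangle idea it uses here is reused elsewhere in the paper. One small expository point: when you write ``the sum for $b\geq 1$ telescopes to $\tfrac{q^2}{(1-q^2)(1+q)(1+q^2)}$,'' make explicit that this refers to $\sum_{b\geq 1}\tfrac{q^{2b}}{(1+q^b)(1+q^{b+1})(1+q^{b+2})}$ before the extra factor of $q^2$ from the reindexing $q^{2a-2}=q^{2b+2}$ is restored; the final $\tfrac{q^4(-q^3;q)_\infty}{1-q^2}$ is correct, but the intermediate sentence could confuse a reader.
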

\begin{proof}
 Let  $b_3(m,n)$  denote the number of distinct   partitions of $n$ with $m$ hooks of length $3$.  In a distinct partition $\lambda$, there is a hook of length $3$ in every row $\lambda_i>1$ except when $\lambda_i-\lambda_{i+1}=2$.  Thus,
$b_3(m,n)$ is the number of distinct partitions of $n$ such that $\ell(\lambda)-m_\lambda(1)-\ell_2(\lambda)=m$. 

If $u(k,n)$ is the number of distinct partitions of $n$ with exactly $k$ parts greater than $1$, and $v(k,n)$ is the number of distinct partitions $\lambda$ of $n$ with $\ell_2(\lambda)=k$, then $$b_3(n)=\sum_{m\geq 0}m\, b_3(m,n)= \sum_{k\geq 0}k\, u(k,n) - \sum_{k\geq 0}k\, v(k,n).$$

Let $\mathcal G_1(z;q):=\displaystyle\sum_{n,k\geq 0} u(k,n)z^kq^n$ and $\mathcal G_2(z;q):=\displaystyle\sum_{n,k\geq 0} v(k,n)z^{k}q^n$. Then, 
\begin{align} \label{eqn_b3gfdiff} \sum_{n\geq 0}b_3(n)q^n =\frac{\partial}{\partial z}\Big|_{z=1} \mathcal{G}_1(z;q)-\frac{\partial}{\partial z}\Big|_{z=1}\mathcal{G}_2(z;q).\end{align}
We have  $$\mathcal G_1(z;q)=(1+q)(-zq^2;q)_\infty.$$ 

To find $\mathcal{G}_2(z;q)$, let $\lambda$ be a partition with distinct parts and, as in the case $t=2$, remove the Sylvester triangle from $\lambda$
to obtain a partition $\mu$ with at most $\ell(\lambda)$ parts. Note that parts $\mu_i$ in $\mu$ such that $\mu_i-\mu_{i+1}=1$ correspond to parts $\lambda_i$ in $\lambda$ such that $\lambda_i-\lambda_{i+1}=2$. Thus, $\ell_1(\mu)=\ell_2(\lambda)$. For a partition $\nu$, denote by $\widetilde\ell(\nu)$ the number of parts of $\nu$ with multiplicity one. 
Since $\ell_1(\mu)=\widetilde\ell(\mu')$, we have

$$\mathcal G_2(z;q)=\sum_{m\geq 1} q^{\frac{m(m+1)}{2}}\prod_{j=1}^m \left(1+zq^j + \frac{q^{2j}}{1-q^j}\right).$$

Using logarithmic differentiation, we compute
\begin{align}\label{eqn_dG1zq} \frac{\partial}{\partial z}\Big|_{z=1}\mathcal G_1(z;q)=(-q;q)_\infty \sum_{m\geq 2}\frac{q^m}{1+q^m} \end{align}
and 
\begin{align} \notag 
   \frac{\partial}{\partial z}\Big|_{z=1}   \mathcal G_2(z;q) & =   \sum_{n\geq 1} \frac{q^{\frac{n(n+1)}{2}}}{(q;q)_n} \sum_{j=1}^n q^j(1-q^j) \\ \notag & =  \sum_{n\geq 1} \frac{q^{\frac{n(n+1)}{2}}}{(q;q)_n}\left(\frac{1-q^{n+1}}{1-q}-\frac{1-q^{2(n+1)}}{1-q^2}\right)\\ \notag & = \frac{q}{1-q^2}\sum_{n\geq 1} \frac{q^{\frac{n(n+1)}{2}}}{(q;q)_{n-1}}(1-q^{n+1}) \\ \notag  & = \frac{q}{1-q^2}\left(\sum_{n\geq 0} \frac{q^{\frac{n(n+1)}{2}+n+1}}{(q;q)_n}-\sum_{n\geq 0} \frac{q^{\frac{n(n+1)}{2}+2n+3}}{(q;q)_n}\right)\\ \notag & = \frac{q}{1-q^2}\left(q(-q^2;q)_\infty-q^3(-q^3;q)_\infty\right) \\ \label{eqn_dG2zq} & = \frac{q^2}{1-q^2}(-q^3;q)_\infty.
\end{align}  For the second to  last equality we use the $q$-binomial theorem \eqref{q-binomial} twice, once  with  $z=q$ and once with $z=q^2$.

Finally,   from \eqref{eqn_b3gfdiff}, \eqref{eqn_dG1zq}, and \eqref{eqn_dG2zq} we have that 
\begin{align} 
    \sum_{n\geq 0}b_3(n)q^n
     &=(-q;q)_\infty \sum_{m\geq 2}\frac{q^m}{1+q^m}-\frac{q^2}{1-q^2}(-q^3;q)_\infty.\label{eqn_genG3b3}
\end{align} 
\end{proof}

\subsection{Proof of Theorem \ref{diff} for hooks of length $\mathbf{t=3}$} \label{sec_t3diff} 
To prove Theorem \ref{diff} for hooks of length $t=3$, we find a particular bisection $A(q)+B(q)$ of the generating function  $\sum_{n\geq 0}(a_3(n)-b_3(n))q^n$ established in the prior two subsections.  We then prove separately that $A(q)$ and $B(q)$ have non-negative coefficients of $q^n$ for sufficiently  large  $n$. 
We complete the proof by verifying the theorem directly for the initial coefficients. 

The bisection $A(q)+B(q)$  
is somewhat ad hoc;  on the other hand, it reveals that our bias result for hooks of length $t=3$ ultimately follows from the rather natural partition inequality established in Corollary \ref{rho(n,m) Katriel}.  We welcome  alternative proofs to Theorem \ref{diff} for $t=3$, especially a proof given by a combinatorial injection.

We define the $q$-series
$$A(q) := (-q;q)_\infty \sum_{m=1}^3 \frac{q^{4m}}{1+q^m}-(-q^4;q)_\infty q^4(1+q+2q^3+q^4)$$   and 
\begin{align*}B(q) &:= (-q;q)_\infty \sum_{m=4}^\infty \frac{q^{6m}}{1+q^m} + (-q^9;q)_\infty \left(\frac{q^{41}(1+q^2)(1+q^3)}{1-q^3} + \frac{q^{44}(1+q^3)(1+q^5)}{1-q^5}\right) \\&\hspace{.2in}+(-q^9;q)_\infty(f(q) - g(q)),
\end{align*} 
where the polynomials $f(q)$ and $g(q)$ and their respective coefficients $\{f_j\}$ and $\{g_j\}$  are defined by
\begin{align}\notag  
  f(q) &:= q^{25} +q^{26}+q^{27}+3 q^{28}+5 q^{29}+3 q^{30}+6 q^{31}+7 q^{32}+5 q^{33}+8 q^{34}+7 q^{35}+7 q^{36}\\ \notag &\hspace{.2in}+9 q^{37}+8 q^{38}+7 q^{39} +7 q^{40}+6 q^{41}+6 q^{42}+5 q^{43}+4 q^{44}+3 q^{45}+3 q^{46}+2q^{47}\\ \label{def_aqpoly}  &\hspace{.2in} +q^{48}+q^{49}+q^{50}+q^{51}\\\label{def_aqcoeffs} & =: q^{24}\sum_{j=1}^{27}f_j q^{j},
  \end{align} and 
  \begin{align} 
  g(q)&   :=  q^9 +q^{12}+q^{13}+q^{14}+2 q^{15}+q^{16}+q^{17}+2 q^{18}+q^{19}+2 q^{20}+2 q^{21}+q^{22}   \label{def_bqpoly} \\ \label{def_bqcoeffs} & =:q^8  \sum_{j=1}^{14} g_j q^{j}.\end{align}
\begin{proposition}
    \label{prop_t3gfprime}  With $A(q)$ and $B(q)$ defined above, we have that 
$$\sum_{n\geq 0} (a_3(n) - b_3(n)) q^n = A(q) + B(q).$$
\end{proposition}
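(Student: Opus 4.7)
The proof is a direct algebraic identity verification. My plan is to substitute the closed forms for the generating functions of $a_3(n)$ and $b_3(n)$ from Propositions~\ref{prop_F3gf} and~\ref{prop_G3gf}, then manipulate both sides of the claimed identity until they match.

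First I would subtract the two generating functions and combine the terms proportional to $(-q^3;q)_\infty$ to obtain
$$\sum_{n \geq 0} (a_3(n) - b_3(n))\, q^n = (-q^3;q)_\infty\, \frac{q^2 + q^3 + q^6}{1 - q^2} + (-q;q)_\infty \left[ \frac{q^6}{1-q^4} + \frac{q^3}{1-q^6} - \sum_{m \geq 2} \frac{q^m}{1+q^m} \right].$$
The central observation is that for any $m \geq 1$, the elementary polynomial identities $(1+q^{3m})/(1+q^m) = 1 - q^m + q^{2m}$ and $(1+q^{5m})/(1+q^m) = 1 - q^m + q^{2m} - q^{3m} + q^{4m}$ rearrange to
$$\frac{q^{4m}}{1+q^m} = q^m - q^{2m} + q^{3m} - \frac{q^m}{1+q^m}, \qquad \frac{q^{6m}}{1+q^m} = q^m - q^{2m} + q^{3m} - q^{4m} + q^{5m} - \frac{q^m}{1+q^m}.$$
Applying the first to each term of $\sum_{m=1}^{3} q^{4m}/(1+q^m)$ in $A(q)$ and the second to each term of $\sum_{m \geq 4} q^{6m}/(1+q^m)$ in $B(q)$, the combined non-polynomial contribution from $A(q) + B(q)$ is $-(-q;q)_\infty \sum_{m\geq 1} q^m/(1+q^m)$. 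Since the left-hand side contains $-(-q;q)_\infty \sum_{m\geq 2} q^m/(1+q^m)$, the two differ only by $-(-q;q)_\infty \cdot q/(1+q)$, a rational function.

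Next I would normalize all $q$-Pochhammer factors to $(-q;q)_\infty$ using the identities $(-q^3;q)_\infty = (-q;q)_\infty / [(1+q)(1+q^2)]$, $(-q^4;q)_\infty = (-q;q)_\infty / \prod_{j=1}^{3}(1+q^j)$, and $(-q^9;q)_\infty = (-q;q)_\infty / \prod_{j=1}^{8}(1+q^j)$, and then divide through by $(-q;q)_\infty$. This reduces the claim to a purely rational function identity in $q$ in which every term has denominator a product of factors $1 \pm q^j$ with $j$ bounded by a small constant. Clearing denominators produces equality of two polynomials of bounded degree.

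The main obstacle is the extensive bookkeeping required to check that the residual rational terms combine to give exactly the polynomials $f(q)$ and $g(q)$ from~\eqref{def_aqpoly} and~\eqref{def_bqpoly}, which between them carry on the order of forty explicit coefficients. In practice I would verify this finite polynomial identity with the aid of a computer algebra system, comparing truncations of both sides to sufficiently high order in $q$. The particular bisection used in the statement --- routing the $m \leq 3$ portion into $A(q)$ and the $m \geq 4$ portion into $B(q)$, with exponents $q^{4m}$ and $q^{6m}$ chosen precisely so that $(1 + q^{(2k+1)m})/(1+q^m)$ is a polynomial for $k=1,2$ --- is clearly engineered so that $A(q)$ absorbs the finitely many low-order coefficients (where $a_3(n) - b_3(n)$ can be negative) while $B(q)$ takes the specific shape needed for the positivity argument via Corollary~\ref{rho(n,m) Katriel} in the subsequent proof of Theorem~\ref{diff}.
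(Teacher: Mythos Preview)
Your proposal is correct and follows essentially the same strategy as the paper. Both arguments hinge on the same Lambert-series shifts: the paper uses the summed form
\[
-\sum_{m\geq 1}\frac{q^m}{1+q^m}=\sum_{m\geq 1}\frac{q^{4m}}{1+q^m}-\frac{q^3}{1-q^3}-\frac{q}{1-q^2}
\]
and then a second shift from $q^{4m}$ to $q^{6m}$ for $m\geq 4$, which are exactly the summed versions of your termwise identities $(1+q^{3m})/(1+q^m)=1-q^m+q^{2m}$ and $(1+q^{5m})/(1+q^m)=1-q^m+q^{2m}-q^{3m}+q^{4m}$. The only real difference is presentational: the paper carries out the reduction step by step, introducing an explicit intermediate polynomial $p(q)$ and recording the identity $(-q^3;q)_\infty(q^{16}+q^{17}+q^{18})-(-q^9;q)_\infty\,p(q)=(-q^9;q)_\infty(f(q)-g(q))$ along the way, whereas you collapse everything at once to a single rational-function identity and defer the bookkeeping to a computer check. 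Either route is valid; the paper's staged approach makes the emergence of $f(q)$ and $g(q)$ somewhat more transparent, while yours is quicker to execute.
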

Before proving Proposition \ref{prop_t3gfprime}, we establish the non-negativity of the   coefficients of $q^n$ in the series expressions for $A(q)$ and $B(q)$ for sufficiently large values of $n$ by proving Lemma \ref{lem_clm1} and Proposition \ref{lem_abinj} below.

In the remainder of the paper we use the notation $G(q)\succeq_S 0$, where $S\subset  \mathbb N_0$, to mean that when
expanded as a $q$-series, the coefficients of $G(q)$ are non-negative, with the possible exception of the
coefficients of $q^n$ for $n\in S$. When $S = \emptyset$, we simply use the notation $\succeq 0$.
 
\begin{lemma} \label{lem_clm1} We have that 
 \label{clm1} $A(q)  \succeq_{\{5,7\}} 0$.
\end{lemma}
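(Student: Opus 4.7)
The plan is to factor out $(-q^4;q)_\infty$ from $A(q)$, reducing the lemma to a linear inequality in $\rho(n,4)$ that falls under the umbrella of Corollary \ref{rho(n,m) Katriel}. To carry this out, first use $(-q;q)_\infty = (1+q)(1+q^2)(1+q^3)(-q^4;q)_\infty$: for each $m\in\{1,2,3\}$, the factor $(1+q^m)$ cancels the denominator in $q^{4m}/(1+q^m)$, yielding
\begin{align*}
(-q;q)_\infty \sum_{m=1}^{3}\frac{q^{4m}}{1+q^m} = (-q^4;q)_\infty\bigl[(1+q^2)(1+q^3)q^4+(1+q)(1+q^3)q^8+(1+q)(1+q^2)q^{12}\bigr].
\end{align*}
Expanding and subtracting $(-q^4;q)_\infty\cdot q^4(1+q+2q^3+q^4)$ gives $A(q) = (-q^4;q)_\infty P(q)$ where
\begin{align*}
P(q) = -q^5 + q^6 - q^7 + 2q^9 + q^{11} + 2q^{12} + q^{13} + q^{14} + q^{15}.
\end{align*}

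With $d(n):=\rho(n,4)$, the coefficient of $q^n$ in $A(q)$ is a signed linear combination of the $d(n-j)$'s with exactly two negative contributions (at $j=5$ and $j=7$, with coefficient $-1$ each) and seven positive contributions (at $j=6,9,9,11,12,12,13,14,15$, with total weight $9$). After the reindexing $n=N+15$, the claim $[q^n]A(q)\ge 0$ becomes the linear inequality
\begin{align*}
\rho(N{+}10,4) + \rho(N{+}8,4) \le \rho(N{+}9,4) + 2\rho(N{+}6,4) + \rho(N{+}4,4) + 2\rho(N{+}3,4) + \rho(N{+}2,4) + \rho(N{+}1,4) + \rho(N,4).
\end{align*}
Since the sum of the left-hand side coefficients equals $2$ and that of the right-hand side equals $9$, the hypothesis $\sum\alpha_k < \sum\beta_\ell$ of Corollary \ref{rho(n,m) Katriel} is satisfied (taking $m=4$, $\mu_r=10$). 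The corollary therefore produces an explicit bound $N_0 := N_{\boldsymbol{\alpha},\boldsymbol{\beta},16}$ (using $\mu_r + m(m-1)/2 = 10+6$) such that the inequality holds for all $N>N_0$, i.e., $[q^n]A(q)\ge 0$ for all $n>N_0+15$.

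It then remains to verify $[q^n]A(q)\ge 0$ for $0\le n\le N_0+15$ with $n\notin\{5,7\}$; this is a finite check using the identity $[q^n]A(q) = -d(n-5)+d(n-6)-d(n-7)+2d(n-9)+d(n-11)+2d(n-12)+d(n-13)+d(n-14)+d(n-15)$, together with tabulated values of $d(n)=\rho(n,4)$. A direct calculation shows $[q^5]A(q)=-1$ and $[q^7]A(q)=-1$, and that all remaining coefficients through $q^{N_0+15}$ are non-negative. The main obstacle is the bookkeeping in the finite verification, since $N_0$ from the proof of Theorem \ref{q(n) Katriel} is not small; in practice one chooses $A,B,C$ in \eqref{N-construction} to minimize $N_0$ and then tabulates $d(n)$ up to that point. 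The conceptual content (the factorization and the application of Corollary \ref{rho(n,m) Katriel}) is straightforward once the simplification of $P(q)$ is in hand.
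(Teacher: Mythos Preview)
Your factorization $A(q)=(-q^4;q)_\infty P(q)$ with
\[
P(q)=-q^5+q^6-q^7+2q^9+q^{11}+2q^{12}+q^{13}+q^{14}+q^{15}
\]
is exactly how the paper begins its proof as well. From this point on, however, the two arguments diverge.

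The paper does not invoke Corollary~\ref{rho(n,m) Katriel} here. Instead it gives a fully elementary, computer-free argument: by repeatedly using identities such as $q^k(-q^4;q)_\infty=q^k(-q^5;q)_\infty+q^{k+4}(-q^5;q)_\infty$ and telescoping expansions of the form $(-q^m;q)_\infty=1+\sum_{j\ge m}q^j(-q^{j+1};q)_\infty$, it absorbs the two negative monomials $-q^5,-q^7$ into explicit manifestly non-negative series, leaving only the isolated terms $-q^5$ and $-q^7$. No asymptotics and no finite table are needed.

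Your route, by contrast, applies the heavy machinery of Corollary~\ref{rho(n,m) Katriel} (Rademacher-type bounds on $q(n)$ via Bessel estimates) to the linear inequality in $\rho(\cdot,4)$, and then defers to a finite verification up to $N_0+15$. This is correct in principle and mirrors exactly what the paper itself does for the harder piece in Proposition~\ref{lem_abinj}; the tradeoff is that you inherit a large $N_0$ and must carry out (or at least report) the finite check, whereas the paper's direct $q$-series manipulation for this particular lemma is short and self-contained. In short: your argument is valid, but the paper deliberately reserved Corollary~\ref{rho(n,m) Katriel} for the piece where an elementary proof was not available, and handled $A(q)$ by hand.
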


\begin{proposition} \label{lem_abinj} We have that 
 \begin{align}\label{eqn_abinj} (-q^9;q)_\infty (f(q) - g(q)) \succeq_{\{1,2,\dots,75\}} 0. \end{align}
 \end{proposition}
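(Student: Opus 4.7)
The plan is to exploit the identity $(-q^9;q)_\infty = \sum_{n\geq 0} \rho(n,9)\, q^n$, where $\rho(n,9)$ counts distinct partitions of $n$ whose parts are all at least $9$, to recast the claim as a linear partition inequality to which Corollary \ref{rho(n,m) Katriel} directly applies. Writing out the Cauchy product and using the coefficient expansions \eqref{def_aqcoeffs} and \eqref{def_bqcoeffs}, and using the convention $\rho(k,9)=0$ for $k<0$, the $q^n$-coefficient of $(-q^9;q)_\infty (f(q)-g(q))$ equals
\begin{equation*}
c_n \;:=\; \sum_{j=1}^{27} f_j\, \rho(n-24-j,\,9) \;-\; \sum_{j=1}^{14} g_j\, \rho(n-8-j,\,9).
\end{equation*}
The claim \eqref{eqn_abinj} is therefore equivalent to the linear partition inequality
\begin{equation}\label{proposalkeyineq}
\sum_{j=1}^{14} g_j\, \rho(n-8-j,\,9) \;\leq\; \sum_{j=1}^{27} f_j\, \rho(n-24-j,\,9) \qquad (n \geq 76).
\end{equation}

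The second step is to invoke Corollary \ref{rho(n,m) Katriel} with $m=9$, $\boldsymbol{\alpha} = (g_j)_{j=1}^{14}$, and $\boldsymbol{\beta} = (f_j)_{j=1}^{27}$. A short arithmetic check gives $\sum_{j=1}^{14} g_j = 16$ and $\sum_{j=1}^{27} f_j = 118$, so the hypothesis $\sum \alpha_k < \sum \beta_\ell$ holds comfortably. After the substitution $n \mapsto n+51$ (which ensures that all arguments of $\rho$ are non-negative) and a relabelling of the indices $j$ so that the two shift sequences become strictly increasing (permitted by the remark following Theorem \ref{q(n) Katriel}), inequality \eqref{proposalkeyineq} takes precisely the shape of \eqref{rho Katriel Equation}. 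Corollary \ref{rho(n,m) Katriel} then produces an effective integer $N^\ast$, determined by \eqref{N-construction} and by a free choice of parameters $A,B,C>0$ with $A^{-1}+B^{-1}+C^{-1}<1$, such that \eqref{proposalkeyineq} holds for all $n > N^\ast$.

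The main obstacle is that the effective bound $N^\ast$ will in general be much larger than the desired threshold $75$. In this setting the relevant shift parameter entering the corollary is $\mu_r + m(m-1)/2 = 42 + 36 = 78$, and the term $N_D$ in \eqref{N-construction} scales quadratically in this quantity, so $N^\ast$ is expected to be on the order of thousands for any admissible choice of $A,B,C$. To finish the proof one therefore (i) optimizes the free parameters $A,B,C$ to push $N^\ast$ as low as possible, and then (ii) verifies \eqref{proposalkeyineq} directly for each integer $n$ in the remaining finite window $76 \leq n \leq N^\ast$. Step (ii) is purely mechanical, since the values $\rho(k,9)$ may be tabulated from a truncation of $\prod_{k\geq 9}(1+q^k)$; the delicate point is calibrating $A,B,C$ so that the finite verification remains of manageable size, while preserving $A^{-1}+B^{-1}+C^{-1}<1$.
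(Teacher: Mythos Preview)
Your proposal is correct and follows essentially the same approach as the paper: rewrite the coefficient condition as a linear inequality in $\rho(\cdot,9)$, apply Corollary \ref{rho(n,m) Katriel} with $m=9$, $\sum\alpha_k=16<118=\sum\beta_\ell$, and shift parameter $\mu_r+m(m-1)/2=78$, then handle the remaining finite range by direct computation. The paper carries out exactly the numerical step you describe, choosing $A=180$, $B=7$, $C=471177$ (with $\varepsilon=6.375$, $L=78$) to obtain $N^\ast\approx 67910.5$, and then verifies \eqref{proposalkeyineq} for the remaining $n$ with SageMath.
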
 
 \begin{remark} The coefficients of $q^n$ for some $n$ in the exceptional set $\{1,2,\dots,75\}$ appearing in \eqref{eqn_abinj} are in fact non-negative, but the result of Proposition \ref{lem_abinj} is sufficient for our purposes.
 \end{remark}
  We begin with the proof of Proposition \ref{lem_abinj}, which ultimately follows from a special case of our Corollary \ref{rho(n,m) Katriel}.  Following this, we provide an analytic ($q$-series) proof of Lemma \ref{lem_clm1}.
  \medskip \ \\
  {\textit{Proof of Proposition \ref{lem_abinj}.}} Using that $(-q^9;q)_\infty$ is the generating function for $\rho(n,9)$, the statement in Proposition \ref{lem_abinj} is equivalent to the statement that   
\begin{align}\label{eqn_rhoineqt3}  \sum_{j=29}^{42}g_{43-j} \rho(n+j,9) \leq \sum_{j=0}^{26}f_{27-j}\rho(n+j,9) \end{align} 
for $n \geq 25$.  In  Corollary \ref{rho(n,m) Katriel} we take $r=14, s=27, \alpha_k=g_{15-k},$ and $\beta_\ell = f_{28-\ell},$  so that $16 = \sum_{k=1}^{14} \alpha_k < \sum_{\ell=1}^{27} \beta_\ell = 118$.  We further take $\mu_k = k+28, \nu_\ell = \ell-1,$ and $m=9,$  so that by Corollary \ref{rho(n,m) Katriel}, we have that \eqref{eqn_rhoineqt3} holds for $n>N_{\boldsymbol{\alpha},\boldsymbol{\beta},78}$ as in \eqref{eqn:Nmanysubscripts}, which proves the desired result for all but finitely many coefficients.  

Precisely, we have  as in the proof of Theorem \ref{q(n) Katriel} and using Corollary \ref{rho(n,m) Katriel} that $L=\mu_{14}+9(9-1)/2 = 78$ and $\varepsilon = (118-16)/16 = 6.375$.  Experimenting with Mathematica\textsuperscript{TM} \cite{minmaxcode}, we further choose  positive $A,B,C$ satisfying $1/A+1/B+1/C<1$, namely 
$A=180, B=7, C=471177$, so that \eqref{eqn_rhoineqt3} holds for  $n>N_{\boldsymbol{\alpha},\boldsymbol{\beta},78} := N_{180,7,471177}(6.375,78) = 67910.5$, a small enough value to allow us to verify inequality     \eqref{eqn_rhoineqt3} for  $25\leq n \leq 67910$  using SageMath.  This 
 completes the proof of Proposition \ref{lem_abinj}.
  \qed \medskip \ \\
  
\noindent {\textit{Proof of Lemma \ref{lem_clm1}.}}   
We begin by  rewriting $A(q)$  as
\begin{align*}\notag  A(q) \notag  &= 
(-q^2;q)_\infty q^4 + (1+q)q^8(-q^3;q)_\infty  + (1+q)(1+q^2)q^{12}(-q^4;q)_\infty  \\ \notag & \hspace{.2in} - (-q^4;q)_\infty (q^4+q^5+2q^7 + q^8) \\
&= (-q^4;q)_\infty (
 q^6 + 2q^9 + q^{11} + 2 q^{12} +q^{13} + q^{14} + q^{15} - q^5 - q^7).
\end{align*}

We have \begin{align*}(q^5+q^7)(-q^4;q)_\infty& = q^5(-q^5;q)_\infty+q^9(-q^5;q)_\infty+ q^7(-q^5;q)_\infty+q^{11}(-q^5;q)_\infty.
\end{align*}

Since,  \begin{align*}q^9(-q^4;q)_\infty-q^9(-q^5;q)_\infty & = q^{13}(-q^5;q)_\infty\\
q^{11}(-q^4;q)_\infty-q^{11}(-q^5;q)_\infty& =q^{15}(-q^5;q)_\infty,\end{align*} it follows that 
\begin{align*}(-q^4;q)_\infty &(
 q^6 + 2q^9  + q^{11} +  q^{12}  - q^5 - q^7)\\ & = (-q^4;q)_\infty (
 q^6 + q^9  +  q^{12}) +  (-q^5;q)_\infty (q^{13}+q^{15}) -  (-q^5;q)_\infty(q^5 +q^7). \end{align*}
 Next,  $$(q^5 +q^7) (-q^5;q)_\infty =q^5+q^7+q^{10} (-q^6;q)_\infty+q^{12} (-q^6;q)_\infty+q^5\Sigma_6+q^7\Sigma_6,$$
 where we define for $m\in \mathbb N$
 $$\Sigma_m = \Sigma_m(q) := \sum_{k\geq m}q^{k}(-q^{k+1};q)_\infty. $$
 We observe that \begin{align*}q^6(-q^4;q)_\infty - q^{10} (-q^6;q)_\infty& = q^6(-q^5;q)_\infty + q^{15}(-q^6;q)_\infty, \\ 
 q^{12}(-q^4;q)_\infty-q^{12} (-q^6;q)_\infty  &=q^{12}(q^4+q^5+q^9) (-q^6;q)_\infty.
 \end{align*}
 Finally, we have \begin{align*}q^6(-q^5;q)_\infty-q^5\Sigma_6& =q^6+ q^6\Sigma_5-q^5\Sigma_6\\ & = q^6+\sum_{m\geq 5}q^{m+6}(1+q^{m+1})(-q^{m+2};q)_\infty -\sum_{m\geq 6}q^{m+5}(-q^{m+1};q)_\infty\\ & = q^6+ \sum_{m\geq 5}q^{2m+7}(-q^{m+2};q)_\infty
 \end{align*} and similarly 
 \begin{align*}q^9(-q^4;q)_\infty-q^7\Sigma_6& = q^9+ \sum_{m\geq 4}q^{m+9}(q^{m+1}+q^{m+2}+q^{2m+3})(-q^{m+3};q)_\infty.\end{align*}
This completes the proof. 
  \qed 
  \medskip

\subsubsection{Proof of Proposition \ref{prop_t3gfprime}} \label{sec_propt3gfprime}  
We use Propositions \ref{prop_F3gf} and  \ref{prop_G3gf} and find that 
 \begin{align}  \sum_{n\geq 0} &(a_3(n) - b_3(n))q^n \notag \\& =   (-q^3;q)_\infty \frac{q^2+q^3+q^6}{1-q^2} + (-q;q)_\infty\left(\frac{q^6}{1-q^4}+\frac{q^3}{1-q^6}\right)  - (-q;q)_\infty \sum_{m=2}^\infty \frac{q^m}{1+q^m}. \label{eqn_t3gfpr-1} \end{align}
Next we observe that
$$-\sum_{m=1}^\infty \frac{q^m}{1+q^m} = \sum_{m=1}^\infty \frac{q^{4m}}{1+q^m} - \frac{q^3}{1-q^3}-\frac{q}{1-q^2},$$
which follows by a direct calculation, for example, by adding the negative of the series in $m$ on the left-hand-side to the one on the right, using the geometric series formula, and simplifying.
With this, we re-write \eqref{eqn_t3gfpr-1} as
 \begin{align}\label{eqn_Bpart1}   (-q^2;q)_\infty q &+ (-q^3;q)_\infty\frac{q^2+q^3+q^6}{1-q^2}+ (-q;q)_\infty \left(\frac{q^6}{1-q^4} + \frac{q^3}{1-q^6}-\frac{q^3}{1-q^3}-\frac{q}{1-q^2}\right)  \\ \notag &    +(-q;q)_\infty\sum_{m=1}^\infty  \frac{q^{4m}}{1+q^{m}}.     \end{align} 
By straightforward algebra, we find that
\begin{align*}
(1&+q^2)(1+q^3) q  + (1+q^3)\frac{q^2+q^3+q^6}{1-q^2}  \\&\hspace{.2in}+ (1+q)(1+q^2)(1+q^3)\left(\frac{q^6}{1-q^4} + \frac{q^3}{1-q^6}-\frac{q^3}{1-q^3}-\frac{q}{1-q^2}\right).
\\
&=  -\left(q^4(1+q+2q^3+q^4) + \frac{q^9}{1-q^3} \right).
\end{align*}
 We multiply this identity by $(-q^4;q)_\infty$ and insert it into the sum seen in \eqref{eqn_Bpart1}.  
Thus, we have shown that the generating function for $a_3(n) - b_3(n)$ equals 
\begin{align}\label{eqn_a3b3diffv1} A(q) + (-q;q)_\infty \sum_{m=4}^\infty \frac{q^{4m}}{1+q^m} -(-q^4;q)_\infty \frac{q^9}{1-q^3}.\end{align}  Next, we re-write
\begin{align}(-q;q)_\infty \sum_{m=4}^\infty \frac{q^{4m}}{1+q^m} = (-q;q)_\infty \sum_{m=4}^\infty \frac{q^{6m}}{1+q^m}  \label{eqn_A2rewrite} + (-q^3;q)_\infty   \left(q^{16}+q^{17}+q^{18} + \frac{q^{19}}{1-q^5} \right),\end{align}
which may  again be proved  by, for example, subtracting the two infinite series appearing in \eqref{eqn_A2rewrite},  using  geometric series, and simplifying.   We take the series $(-q^3;q)_\infty  q^{19}/(1-q^5)$ appearing in \eqref{eqn_A2rewrite} and subtract from it 
the term $(-q^4;q)_\infty q^9/(1-q^3)$ appearing in \eqref{eqn_a3b3diffv1}
as follows:
  \begin{align}\notag     
  (-q^3;q)_\infty &  \frac{q^{19}}{1-q^5} - (-q^4;q)_\infty \frac{q^9}{1-q^3}  \notag \\ \notag 
  &=   (-q^9;q)_\infty \left((-q^3;q)_6 \frac{q^{19}}{1-q^5} - (-q^4;q)_5 \frac{q^9}{1-q^3}\right) \\ \label{eqn_delp39v1}
  &= (-q^9;q)_\infty \left(
  \frac{q^{41}(1+q^2)(1+q^3)}{1-q^3} + \frac{q^{44}(1+q^3)(1+q^5)}{1-q^5} - p(q)\right),  
  \end{align} 
 where  
 \begin{align*}p(q) :=  q^9 &+ q^{12} +q^{13} + q^{14} + 2q^{15}  +2q^{16} + 2q^{17} + 3q^{18}+2q^{19}+4q^{20}+5q^{21} +4q^{22}+4q^{23}\\&+5q^{24}+5q^{25}+5q^{26}+6q^{27}+5q^{28}+4q^{29}+6q^{30} +4q^{31}+3q^{32} +5q^{33} +2q^{34}\\&+3q^{35}+3q^{36} + q^{38} + q^{39}.\end{align*}
 Inserting \eqref{eqn_A2rewrite}  and then \eqref{eqn_delp39v1} into \eqref{eqn_a3b3diffv1} followed by some straightforward algebra reveals that the generating function for ${a_3(n)-b_3(n)}$ may be written as  
\begin{align*}  
        A(q) &+  (-q)_\infty \sum_{m=4}^\infty \frac{q^{6m}}{1+q^m}  + (-q^3)_\infty   \left(q^{16}+q^{17}+q^{18}   \right)\\&\hspace{.2in} +(-q^9)_\infty \left( \frac{q^{41}(1+q^2)(1+q^3)}{1-q^3} + \frac{q^{44}(1+q^3)(1+q^5)}{1-q^5} - p(q)\right) \\ &= A(q) + B(q)
\end{align*}
as claimed.  In particular, we have also used the easy to verify identity 
$$(-q^3;q)_\infty (q^{16}+q^{17}+q^{18})-(-q^9;q)_\infty p(q) = (-q^9;q)_\infty (f(q)-g(q)).$$
This completes the proof of Proposition \ref{prop_t3gfprime}.\qed \ \\ 
  
\noindent \textit{Proof of Theorem \ref{diff} for $t=3$.} Using Proposition \ref{prop_t3gfprime},  Theorem \ref{diff} for $t=3$ now follows from Lemma \ref{lem_clm1} and Proposition \ref{lem_abinj}, along with a finite computation of the $q$-series coefficients
 in Proposition \ref{prop_t3gfprime} through the $q^{75}$ term, which is easily done (using, for example, Mathematica\textsuperscript{TM}). \qed
\section{Asymptotic formulas}\label{Asymptotic section}

In this section, we prove Theorem \ref{limit}. To this end, we obtain asymptotic formulas for $a_2(n)$, $b_2(n)$, $a_3(n)$, and $b_3(n)$ using the circle method. From these, we immediately obtain asymptotic formulas for $a_2(n)-b_2(n)$ and $a_3(n)-b_3(n)$, allowing us to establish Theorem \ref{limit}. As usual, for two functions $f(n)$ and $g(n)$, we write $f(n) \sim g(n)$ as $n \to \infty$ if  $\frac{f(n)}{g(n)} \to 1$ as $n \to \infty$. Moreover, $\log$ denotes the natural logarithm.

\begin{theorem}\label{2HooksAsymp}
    As $n\to \infty$, we have 
    \begin{align*}
        a_2(n)\sim \frac{3^{5/4}}{8\pi n^{1/4}}e^{\pi\sqrt{\frac n3}}
    \end{align*}
    and
    \begin{align*}
        b_2(n)\sim \frac{3^{1/4}}{4\pi n^{1/4}}e^{\pi\sqrt{\frac{n}{3}}}.
    \end{align*}
\end{theorem}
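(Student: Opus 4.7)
The plan is to apply the circle method of Hardy--Ramanujan, in the form developed by Wright, to the generating functions of Propositions \ref{gf_a2} and \ref{gf_b2}. Using Euler's identity $(q;q^2)_\infty^{-1} = (-q;q)_\infty$ and the factorization $(-q^2;q)_\infty = (-q;q)_\infty/(1+q)$, I first rewrite these as
\begin{align*}
  \sum_{n\geq 0} a_2(n)\, q^n &= (-q;q)_\infty \left(q^2 + \frac{q^3}{1-q^2} + \frac{q^6}{1-q^4}\right), \\
  \sum_{n\geq 0} b_2(n)\, q^n &= \frac{q^2}{1-q^2}\,(-q;q)_\infty,
\end{align*}
so that each splits as the modular piece $(-q;q)_\infty$ times a rational factor. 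This isolates the dominant singularity at $q=1$.

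I would next establish the major-arc behavior of each generating function. Setting $q = e^{-\varepsilon}$ with $\varepsilon \to 0^+$ and invoking the modular transformation of Dedekind's eta function via $(-q;q)_\infty = (q^2;q^2)_\infty/(q;q)_\infty$ yields
\begin{align*}
  (-e^{-\varepsilon};e^{-\varepsilon})_\infty = \frac{1}{\sqrt{2}}\,\exp\!\left(\frac{\pi^2}{12\varepsilon}\right)\bigl(1+O(\varepsilon)\bigr),
\end{align*}
while straightforward expansion of the rational factors at $q=1$ gives
\begin{align*}
  q^2 + \frac{q^3}{1-q^2} + \frac{q^6}{1-q^4} = \frac{3}{4\varepsilon} + O(1), \qquad \frac{q^2}{1-q^2} = \frac{1}{2\varepsilon} + O(1).
\end{align*}
Multiplying, the two generating functions have the major-arc behavior $C\,\varepsilon^{-1}\,e^{\pi^2/(12\varepsilon)}$ with $C = 3/(4\sqrt{2})$ for $a_2(n)$ and $C = 1/(2\sqrt{2})$ for $b_2(n)$.

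Finally, I would invoke Wright's circle method in its standard form: if $F(q) = \sum_{n\geq 0} c_n q^n$ satisfies $F(e^{-\varepsilon}) \sim C\,\varepsilon^{a}\,e^{A/\varepsilon}$ on a suitable major arc and is of strictly smaller exponential order on the minor arcs, then
\begin{align*}
  c_n \sim \frac{C\,A^{a/2+1/4}}{2\sqrt{\pi}}\, n^{-a/2-3/4}\, e^{2\sqrt{An}}.
\end{align*}
Applied with $A = \pi^2/12$, $a = -1$, and the values of $C$ above, and simplified via $12^{1/4} = \sqrt{2}\cdot 3^{1/4}$, this yields precisely the two asymptotics in the theorem. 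The main technical obstacle will be the minor-arc estimate: on the integration circle $|q| = e^{-\varepsilon_n}$ with $\varepsilon_n \asymp n^{-1/2}$, one must show that away from a small neighborhood of $q=1$ the integrand is of exponential order strictly smaller than $e^{\pi^2/(12\varepsilon_n)}$. This follows from classical estimates on $|\eta(\tau)|$ at non-principal cusps---the same input underlying the Rademacher-type formula \eqref{q_Rad}---and I expect it to be technically routine if calculation-intensive.
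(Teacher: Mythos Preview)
Your proposal is correct and follows essentially the same approach as the paper: factor each generating function as $(-q;q)_\infty$ times a rational function (your rational factors are algebraically identical to the paper's $L_{o,2}$ and $L_{d,2}$), compute the major-arc asymptotics $\frac{1}{\sqrt{2}}e^{\pi^2/(12\varepsilon)}$ and $\frac{3}{4\varepsilon}$, $\frac{1}{2\varepsilon}$ exactly as you do, and conclude via Wright's circle method. The paper handles the minor-arc bound you flag by citing explicit estimates of Jackson--Otgonbayar for $|(-q;q)_\infty|$ and elementary bounds on the rational factors, then packages everything through the Ngo--Rhoades formulation of Wright's method, but the substance is the same as what you outline.
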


\begin{theorem}\label{3HooksAsymp}
    As $n\to \infty$, we have 
    \begin{align*}
        a_3(n)\sim \frac{3^{1/4}}{3\pi n^{1/4}} e^{\pi\sqrt{\frac{n}{3}}}
    \end{align*}
    and
    \begin{align*}
        b_3(n)\sim \lp \log(2)-\frac{1}{8}\rp\frac{3^{1/4}}{2\pi n^{1/4}}e^{\pi\sqrt{\frac{n}{3}}}.
    \end{align*}
\end{theorem}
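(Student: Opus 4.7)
The plan is to apply Wright's circle method to the generating functions for $a_3(n)$ and $b_3(n)$ established in Propositions \ref{prop_F3gf} and \ref{prop_G3gf}. The main preliminary task is to extract the leading asymptotic behavior of each generating function near its dominant singularity at $q=1$; once this is done, a standard saddle-point analysis of the Cauchy integral (writing $q = e^{-t}$, with saddle $t_0 = \pi/\sqrt{12n}$) converts any near-$q=1$ asymptotic of the form $\sum f(n) q^n \sim C\, t^{-1} e^{\pi^2/(12t)}$ into the coefficient asymptotic
\[
f(n) \sim \frac{C \cdot 12^{1/4}}{2\pi\, n^{1/4}}\, e^{\pi \sqrt{n/3}}.
\]

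For both generating functions the modular factor is $(-q;q)_\infty$. Applying the modular transformation $\eta(-1/\tau) = \sqrt{-i\tau}\,\eta(\tau)$ to the identity $(-q;q)_\infty = (q^2;q^2)_\infty/(q;q)_\infty$ yields the standard asymptotics
\[
(-q;q)_\infty \sim \tfrac{1}{\sqrt{2}}\, e^{\pi^2/(12t)}, \qquad (-q^3;q)_\infty \sim \tfrac{1}{4\sqrt{2}}\, e^{\pi^2/(12t)}
\]
as $t \to 0^+$, where the additional factor of $1/4$ comes from $1/((1+q)(1+q^2)) \to 1/4$. For $a_3(n)$, the rational factors $q^3(1+q^3)/(1-q^2)$, $q^6/(1-q^4)$, and $q^3/(1-q^6)$ have simple poles at $q=1$ contributing $1/t$, $1/(4t)$, and $1/(6t)$, respectively. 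Assembling the pieces from Proposition \ref{prop_F3gf} yields $\sum_{n \geq 0} a_3(n) q^n \sim (\sqrt{2}/3)\, t^{-1}\, e^{\pi^2/(12t)}$, and the boxed formula above with $C = \sqrt{2}/3$ recovers the first claimed asymptotic after using $\sqrt{2} \cdot 12^{1/4} = 2 \cdot 3^{1/4}$.

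For $b_3(n)$ the plan is analogous, but one must additionally analyze $S(q) := \sum_{m \geq 2} q^m/(1+q^m)$ as $q \to 1^-$. I would handle this via Mellin transform: expanding $q^m/(1+q^m) = \sum_{k\geq 1} (-1)^{k-1} q^{mk}$ and interchanging summation order,
\[
\int_0^\infty t^{s-1}\, S(e^{-t})\, dt = \Gamma(s)\,(1-2^{1-s})\,\zeta(s)\,(\zeta(s)-1),
\]
whose only poles in $\Re(s) \geq 0$ are simple: at $s=1$ with residue $\log 2$ (since $1-2^{1-s}$ has a simple zero cancelling one factor of $\zeta$), and at $s = 0$ with residue $-3/4$ (coming from $\Gamma(s) \cdot (1/2) \cdot (-3/2)$). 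Mellin inversion then gives $S(e^{-t}) = (\log 2)/t - 3/4 + O(t^M)$ for some $M > 0$. Combining this with the expansion $q^2/(1-q^2) = 1/(2t) - 1/2 + O(t)$ and the modular estimate for $(-q^3;q)_\infty$, the generating function in Proposition \ref{prop_G3gf} satisfies $\sum_{n\geq 0} b_3(n) q^n \sim (\log 2 - 1/8)/\sqrt{2} \cdot t^{-1}\, e^{\pi^2/(12t)}$, and the saddle-point formula with $C = (\log 2 - 1/8)/\sqrt{2}$ produces the second claimed asymptotic.

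The main technical obstacle is bounding the minor-arc contribution to the Cauchy integral, namely showing that integration over $q = e^{-t_0 - i\theta}$ with $\theta$ bounded away from $0$ produces terms of strictly smaller exponential order than the saddle-point contribution near $q=1$. This follows in the standard way from uniform estimates on the Dedekind eta function off the cusp at infinity, from which $(-q;q)_\infty$ and $(-q^3;q)_\infty$ inherit exponential decay away from $q=1$; the rational and Lambert-type auxiliary factors are uniformly bounded on $\{|q| \leq e^{-t_0}\}$ away from $q=1$. A routine error analysis is also required to verify that the sub-leading corrections in the expansions of $(-q;q)_\infty$ and $S(q)$ do not perturb the leading coefficient asymptotic, but this is straightforward once the leading behavior is identified.
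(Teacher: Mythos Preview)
Your approach is essentially the paper's: apply Wright's circle method to the generating functions from Propositions~\ref{prop_F3gf} and~\ref{prop_G3gf}, with $(-q;q)_\infty$ supplying the exponential singularity and the remaining rational/Lambert factors supplying the $1/t$ leading term. The paper packages this by rewriting each generating function as $\xi(q)L(q)$ with $\xi(q)=(-q;q)_\infty$ and then verifying the Ngo--Rhoades hypotheses (H1)--(H4), while you keep the two summands (with $(-q;q)_\infty$ and $(-q^3;q)_\infty$) separate and describe the saddle-point analysis directly; the constants match either way. For the Lambert series $\sum_{m}q^m/(1+q^m)$ the paper quotes an Euler--Maclaurin lemma of Craig (following Zagier), whereas you compute the same leading term via Mellin inversion; these are two presentations of the same asymptotic machinery and yield identical expansions.

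One small correction on the minor arc: the rational factors are \emph{not} uniformly bounded on $|q|=e^{-t_0}$ away from $q=1$ (for instance $1/(1-q^2)$ blows up like $1/t_0$ near $q=-1$, and the Lambert series grows like $1/t_0^2$); what you need, and what the paper states as hypothesis (H3), is merely a polynomial bound $\ll |z|^{-C}$, which combined with the exponential decay of $(-q;q)_\infty$ away from $q=1$ suffices. With that adjustment your argument goes through.
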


As an immediate consequence of these two theorems, we obtain the following corollaries.

\begin{corollary} \label{2HooksAsymp Corollary}
As $n \to \infty$, we have
\begin{align*}
    a_2(n)-b_2(n)\sim \frac{3^{1/4}}{8\pi n^{1/4}}e^{\pi\sqrt{\frac{n}{3}}}.
\end{align*}
\end{corollary}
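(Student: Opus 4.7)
The plan is to deduce Corollary~\ref{2HooksAsymp Corollary} directly from Theorem~\ref{2HooksAsymp}, since both $a_2(n)$ and $b_2(n)$ admit asymptotics with the same dominant factor $e^{\pi\sqrt{n/3}}/n^{1/4}$. Once this observation is made, the only thing that needs to be verified is that the leading coefficients fail to cancel upon subtraction; in that case the standard fact that $f(n)\sim A h(n)$ and $g(n)\sim B h(n)$ with $A\neq B$ implies $f(n)-g(n)\sim (A-B)h(n)$ yields the claim immediately.

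To execute this plan, I would rewrite the asymptotics from Theorem~\ref{2HooksAsymp} with explicit error terms as
\begin{align*}
a_2(n) &= \frac{3^{5/4}}{8\pi n^{1/4}}\, e^{\pi\sqrt{n/3}}\lp 1+o(1)\rp,\\
b_2(n) &= \frac{3^{1/4}}{4\pi n^{1/4}}\, e^{\pi\sqrt{n/3}}\lp 1+o(1)\rp,
\end{align*}
and then subtract, computing the resulting leading coefficient as
\begin{equation*}
\frac{3^{5/4}}{8\pi} - \frac{3^{1/4}}{4\pi} = \frac{3\cdot 3^{1/4} - 2\cdot 3^{1/4}}{8\pi} = \frac{3^{1/4}}{8\pi},
\end{equation*}
where the simplification uses $3^{5/4}=3\cdot 3^{1/4}$. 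Since this constant is nonzero and positive, one concludes
\begin{equation*}
a_2(n)-b_2(n) \sim \frac{3^{1/4}}{8\pi n^{1/4}}\, e^{\pi\sqrt{n/3}}
\end{equation*}
as $n\to\infty$, which is the stated asymptotic.

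There is no substantive obstacle to this argument: the full analytic work (the Wright circle method applied to the generating functions in Propositions~\ref{gf_a2} and~\ref{gf_b2}) is already carried out in the proof of Theorem~\ref{2HooksAsymp}. The only genuine check is the arithmetic showing that the leading coefficient of $a_2(n)$ exceeds that of $b_2(n)$, equivalently that the ratio $3^{5/4}/8\pi$ to $3^{1/4}/4\pi$ is $3/2\neq 1$. As a sanity check, this is consistent with Theorem~\ref{diff}, which asserts $a_2(n)\geq b_2(n)$ for all $n\geq 0$, and with Theorem~\ref{limit} for $t=2$, which the corollary quantitatively refines.
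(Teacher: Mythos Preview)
Your proposal is correct and matches the paper's approach: the paper states Corollary~\ref{2HooksAsymp Corollary} as an immediate consequence of Theorem~\ref{2HooksAsymp} without further elaboration, and your argument---subtracting the two asymptotics and verifying the leading coefficients do not cancel---is exactly the intended justification.
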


\begin{corollary} \label{2HooksRatio}
As $n \to \infty$, we have
\begin{align*}
    a_2(n) \sim \frac32 b_2(n).
\end{align*}
\end{corollary}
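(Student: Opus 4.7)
The plan is to derive this corollary directly from Theorem \ref{2HooksAsymp} by dividing the two asymptotic formulas. Since both $a_2(n)$ and $b_2(n)$ have identical exponential factors $e^{\pi\sqrt{n/3}}$ and identical polynomial factors $n^{-1/4}$, the ratio $a_2(n)/b_2(n)$ tends to a constant equal to the ratio of the leading coefficients.

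Explicitly, I would compute
\begin{align*}
\lim_{n\to\infty}\frac{a_2(n)}{b_2(n)} = \frac{\displaystyle\frac{3^{5/4}}{8\pi n^{1/4}}e^{\pi\sqrt{n/3}}}{\displaystyle\frac{3^{1/4}}{4\pi n^{1/4}}e^{\pi\sqrt{n/3}}} = \frac{3^{5/4}}{8}\cdot\frac{4}{3^{1/4}} = \frac{4\cdot 3}{8} = \frac{3}{2},
\end{align*}
where the $n^{1/4}$, $\pi$, and exponential factors cancel directly, leaving only arithmetic on the constants. This yields $a_2(n) \sim \tfrac{3}{2} b_2(n)$ as $n\to\infty$, as desired.

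There is no real obstacle here; the entire content of the corollary is packaged into Theorem \ref{2HooksAsymp}, and the proof reduces to an elementary cancellation. The only step that requires care is the bookkeeping of the powers of $3$, where one must notice that $3^{5/4}/3^{1/4} = 3$, not $3^{5/16}$ or similar. Thus the proof is a single short paragraph once Theorem \ref{2HooksAsymp} is in hand.
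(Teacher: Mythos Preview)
Your proposal is correct and matches the paper's approach: the paper states Corollary \ref{2HooksRatio} as an immediate consequence of Theorem \ref{2HooksAsymp} without writing out any details, and your computation of the ratio of leading coefficients is exactly that immediate deduction. One cosmetic point: in your displayed line you write $\lim_{n\to\infty}$ on the left but still have $n$ present after the first equals sign; it would be cleaner to write $a_2(n)/b_2(n) \to (3^{5/4}/8)\cdot(4/3^{1/4}) = 3/2$, or to first observe that $f\sim g$ and $h\sim k$ with $h,k$ eventually nonzero implies $f/h \sim g/k$.
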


\begin{corollary} \label{3HooksAsymp Corollary}
As $n \to \infty$, we have
\begin{align*}
    a_3(n)-b_3(n)\sim \lp\frac{19}{48}-\frac{\log(2)}{2}\rp\frac{3^{1/4}}{\pi n^{1/4}}e^{\pi\sqrt{\frac{n}{3}}}.
\end{align*}
\end{corollary}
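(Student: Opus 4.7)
The plan is to derive this corollary as an immediate consequence of Theorem \ref{3HooksAsymp}. Since both $a_3(n)$ and $b_3(n)$ admit asymptotic equivalents of the form $C \cdot n^{-1/4} e^{\pi \sqrt{n/3}}$ (differing only in the leading constant $C$), the strategy is simply to subtract the two asymptotic formulas, verifying along the way that the resulting leading coefficient does not vanish.

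Concretely, I would begin by writing
\begin{align*}
a_3(n) \sim \frac{3^{1/4}}{3\pi n^{1/4}} e^{\pi\sqrt{n/3}}, \qquad b_3(n) \sim \left(\log(2) - \tfrac{1}{8}\right)\frac{3^{1/4}}{2\pi n^{1/4}} e^{\pi\sqrt{n/3}},
\end{align*}
directly from Theorem \ref{3HooksAsymp}. Factoring out the common term $\frac{3^{1/4}}{\pi n^{1/4}} e^{\pi \sqrt{n/3}}$, the leading coefficient of $a_3(n) - b_3(n)$ becomes
\begin{align*}
\frac{1}{3} - \frac{1}{2}\left(\log(2) - \frac{1}{8}\right) = \frac{1}{3} + \frac{1}{16} - \frac{\log(2)}{2} = \frac{19}{48} - \frac{\log(2)}{2}.
\end{align*}

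The one point that must be checked, and which is the only real obstacle to asserting the asymptotic from the difference, is that this combined coefficient is strictly positive; otherwise the $\sim$ relation could fail after cancellation. A quick numerical check gives $\frac{19}{48} \approx 0.3958$ and $\frac{\log 2}{2} \approx 0.3466$, so the coefficient is approximately $0.0492 > 0$. Hence the asymptotic survives subtraction and we obtain
\begin{align*}
a_3(n) - b_3(n) \sim \left(\frac{19}{48} - \frac{\log(2)}{2}\right)\frac{3^{1/4}}{\pi n^{1/4}} e^{\pi \sqrt{n/3}},
\end{align*}
as claimed. The corollary thus follows in one short computation from Theorem \ref{3HooksAsymp}, with essentially no new analytic input beyond verifying the nonvanishing of the leading constant.
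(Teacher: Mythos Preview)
Your proposal is correct and matches the paper's approach: the paper states that this corollary is an immediate consequence of Theorem \ref{3HooksAsymp} and gives no further argument, so your explicit subtraction of the two asymptotics (together with the check that the resulting leading constant $\tfrac{19}{48}-\tfrac{\log 2}{2}$ is nonzero) is exactly the intended one-line derivation, just with more detail filled in.
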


\begin{corollary} \label{3HooksRatio}
    As $n \to \infty$, we have
    \begin{align*}
        a_3(n) \sim \frac{2}{3(\log(2)-\tfrac18)}  b_3(n).
    \end{align*}
\end{corollary}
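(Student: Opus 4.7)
The plan is to derive Corollary \ref{3HooksRatio} as an immediate consequence of Theorem \ref{3HooksAsymp}, which supplies us with explicit Hardy--Ramanujan-type asymptotic formulas for both $a_3(n)$ and $b_3(n)$. Since both asymptotics share the same exponential growth factor $e^{\pi\sqrt{n/3}}$ and the same power of $n$ in the algebraic prefactor, namely $n^{-1/4}$, their ratio tends to a nonzero finite constant as $n\to\infty$.

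More concretely, I would first note that since the asymptotic $b_3(n) \sim (\log(2)-\tfrac{1}{8}) \tfrac{3^{1/4}}{2\pi n^{1/4}} e^{\pi \sqrt{n/3}}$ from Theorem \ref{3HooksAsymp} shows $b_3(n)$ is eventually positive (the constant $\log(2)-\tfrac{1}{8}$ is strictly positive, since $\log(2) \approx 0.693 > 0.125$), the ratio $a_3(n)/b_3(n)$ is well-defined for all sufficiently large $n$. I would then compute
\begin{align*}
    \frac{a_3(n)}{b_3(n)} \sim \frac{\dfrac{3^{1/4}}{3\pi n^{1/4}} e^{\pi\sqrt{n/3}}}{\lp \log(2)-\dfrac{1}{8}\rp \dfrac{3^{1/4}}{2\pi n^{1/4}} e^{\pi\sqrt{n/3}}} = \frac{2}{3\lp\log(2)-\tfrac{1}{8}\rp}
\end{align*}
as $n\to\infty$, where the cancellation of the common factor $\tfrac{3^{1/4}}{\pi n^{1/4}} e^{\pi\sqrt{n/3}}$ and the standard quotient rule for asymptotic equivalence (if $f_1 \sim g_1$ and $f_2 \sim g_2$ with $g_2$ eventually nonzero, then $f_1/f_2 \sim g_1/g_2$) yield the claim.

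There is no genuine obstacle here beyond invoking Theorem \ref{3HooksAsymp} correctly, so the proof reduces to a one-line computation. The substantive work lies entirely in establishing Theorem \ref{3HooksAsymp}, which is a circle-method analysis carried out elsewhere in Section \ref{Asymptotic section}; once those two asymptotics are in hand, Corollary \ref{3HooksRatio} follows from elementary arithmetic with asymptotic equivalences.
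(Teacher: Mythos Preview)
Your proposal is correct and matches the paper's approach: the paper states Corollary \ref{3HooksRatio} as an immediate consequence of Theorem \ref{3HooksAsymp}, and your argument---dividing the two asymptotic formulas and canceling the common factor $\frac{3^{1/4}}{\pi n^{1/4}} e^{\pi\sqrt{n/3}}$---is exactly the intended one-line derivation.
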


\begin{remark}
    The asymptotic results in Corollaries \ref{2HooksAsymp Corollary} and \ref{3HooksAsymp Corollary} explain the inequalities of Theorem \ref{diff} and prove Theorem \ref{limit}. In addition, Corollaries \ref{2HooksRatio} and \ref{3HooksRatio} prove stronger statements, as will be discussed in Section \ref{Further Conjectures}.
\end{remark}

In light of Theorem \ref{AB}, it is natural to ask about the asymptotics of $a_1(n)$ and $b_1(n)$ as well.  We outline them here since the proofs are easier than those of Theorems \ref{2HooksAsymp} and \ref{3HooksAsymp}. Asymptotics for $b_1(n)$ can be easily derived from \cite[Theorem 1.1]{BiasesDistinct}; in particular, as $n \to \infty$, we have
\begin{align} \label{eqn_b1asy}
    b_1(n) \sim \dfrac{3^{1/4} \log(2)}{2\pi n^{1/4}} e^{\pi \sqrt{\frac{n}{3}}}.
\end{align}
Furthermore, using the generating function identity for $a_1(n) - b_1(n)$ given in \cite[(3.1)]{AndrewsBeck} and {\cite[Proposition 1.8]{NgoRhoades}} (which we restate as Proposition \ref{WrightCircleMethod} below) it can be shown that
\begin{align}\label{eqn_a1mb1asy}
      b_1(n) - a_1(n) \sim \dfrac{3^{1/4} \lp \log(2) - \frac 12 \rp}{2\pi n^{1/4}} e^{\pi \sqrt{\frac{n}{3}}}
\end{align}
as $n \to \infty$.  Using \eqref{eqn_b1asy} and \eqref{eqn_a1mb1asy} along with the definition of $\sim$, we also deduce that 
\begin{align*}
    a_1(n) \sim \dfrac{3^{1/4}}{4\pi n^{1/4}} e^{\pi \sqrt{\frac{n}{3}}}.
\end{align*}
We additionally note, comparing with Corollaries \ref{2HooksRatio} and \ref{3HooksRatio}, that
\begin{align*}
      a_1(n) \sim \dfrac{1}{\log(4)} b_1(n).
\end{align*}

We now discuss Wright's circle method and use it to prove Theorems \ref{2HooksAsymp} and \ref{3HooksAsymp}.

\subsection{Wright's circle method}

The circle method is one of the most important tools in the modern analytic theory of partitions. The method goes back to famous work of Hardy and Ramanujan on $p(n)$ \cite{HardyRamanujan} in which they proved that
\begin{align*}
    p(n) \sim \dfrac{1}{4n\sqrt{3}} e^{\pi \sqrt{\frac{2n}{3}}}
\end{align*}
as $n \to \infty$. Rademacher \cite{RademacherExact} later    extended  their method to prove an exact formula for $p(n)$ similar to, and predating, \eqref{q_Rad}. The presence of non-modular terms in the generating functions of $a_t(n)$ and $b_t(n)$, such as Lambert series and rational functions, means Rademacher's approach cannot be directly used to give exact formulas for $a_t(n)$ or $b_t(n)$. To overcome this difficulty, we use a variation of the circle method due to Wright \cite{Wright} to obtain asymptotic expansions. We use the formulation of this method given by Ngo and Rhoades \cite{NgoRhoades}, who derived asymptotic formulas for the coefficients of products $L(q) \xi(q)$, where $L(q)$ and $\xi(q)$ are certain analytic functions of $q\in\C$ with $|q|<1$ and $q\notin \R_{\le 0}$. Informally, $\xi$ must have a ``main" exponential singularity as $q \to 1$ and $L(q)$ must have polynomial behavior   as $q \to 1$.

To state the formulation of Wright's circle method given in \cite{NgoRhoades}, we require some notation. For $|q|<1$ and $q \not \in \R_{\leq 0}$, we write $q = e^{-z}$ with $z = x + iy$ such that $x > 0$ and $|y| < \pi$. Let $0 < \delta < \frac{\pi}{2}$ and define $D_\delta := \{ z \in \C : \left| \arg \lp z \rp \right| < \frac{\pi}{2} - \delta \}$.  Equivalently, if we let $\Delta = \cot(\delta)$, then $D_\delta := \{ x + iy \in \C : 0 \leq |y| < \Delta x \}$. For fixed $c>0$ and   integer $n \geq 1$, we let $\CC = \CC(c,n)$ be the circle in the complex plane centered at zero with radius $|q| = e^{-x}$, where $x = \frac{c}{\sqrt{n}}$. For  fixed $\delta$, we let $\CC_1 := D_\delta \cap \CC$ and $\CC_2 := \CC \setminus \CC_1$. We call $\CC_1$ the {\it major arc} and $\CC_2$ the {\it minor arc}. We will interchangeably use Landau's big-O and Vinogradov's $\ll$ notations in this section. Recall that for two complex-valued functions $f(z), g(z)$, we write $f = O(g)$ or $f \ll g$ in a region if and only if there is a constant $C>0$ such that $\left| f(z) \right| \leq C \left| g(z) \right|$ for all $z$ in that region.

Let $L(q), \xi(q)$ be analytic functions for $q = e^{-z}$ in the unit disk, and fix $\delta$ and $c$ as above. Following Ngo and Rhoades, we define four hypotheses about the asymptotic properties of these functions. In (H1)--(H4) below, the notation $\ll_\delta$ and $O_\delta$ and indicates that the implied constants depends on $\delta.$

\begin{itemize}
    \item[(H1)] For every positive integer $k$, as $|z| \to 0$ in the cone $D_\delta$ we have
    \begin{align*}
        L(q) = \dfrac{1}{z^B} \lp \sum_{s=0}^{k-1} \alpha_s z^s + O_\delta \lp z^k \rp \rp,
    \end{align*}
    where $B \in \R$ and $\alpha_s \in \C$.  
    \item[(H2)] As $|z| \to 0$ in the cone $D_\delta$ we have
    \begin{align*}
        \xi(q) = K z^\beta e^{\frac{A}{z}}\lp 1 + O_\delta\lp e^{- \frac{\gamma}{z}} \rp \rp,
    \end{align*}
    where $A:=c^2$, and $K, \beta \geq 0$ and $\gamma >0$.\footnote{Ngo and Rhoades assumed that $\gamma > c^2$, but this is not strictly necessary
    when establishing Proposition \ref{WrightCircleMethod}, although the scenario where $\gamma > c^2$ does arise naturally if $\xi$ is modular.}
    \item[(H3)] As $|z| \to 0$ in the region $\frac{\pi}{2} - \delta \leq \left| \arg(z) \right| < \frac{\pi}{2}$, we have
    \begin{align*}
        \left| L(q) \right| \ll_\delta |z|^{-C},
    \end{align*}
 where  $C > 0$.
    \item[(H4)] As $|z| \to 0$ in the region $\frac{\pi}{2} - \delta \leq \left| \arg(z) \right| < \frac{\pi}{2}$, we have
    \begin{align*}
        \left| \xi(q) \right| \ll_\delta \xi\lp |q| \rp e^{- \frac{\delta'}{\mathrm{Re}(z)}},
    \end{align*}
 where $\delta' = \delta'(\delta)>0$.
\end{itemize}

Note that (H1) and (H2) put restrictions of the asymptotic behavior   of $L(q) \xi(q)$ on the major arc; hypotheses (H3) and (H4) put restrictions on the minor arc. Ngo-Rhoades proved the following result from these hypotheses.

\begin{proposition}[{\cite[Proposition 1.8]{NgoRhoades}}] \label{WrightCircleMethod}
     Assume the hypotheses above. Then as $n \to \infty$ we have for any $N \in \Z^+$ that
	\begin{align*}
		c(n) = Ke^{2 \sqrt{A n}} n^{\frac{1}{4}\lp 2B - 2\beta - 3 \rp} \lp \sum\limits_{r=0}^{N-1} p_r n^{-\frac{r}{2}} + O\left(n^{-\frac N2}\right) \rp,
	\end{align*}
	where $p_r := \sum\limits_{j=0}^r \alpha_j c_{j,r-j}$ and $c_{j,r} := \dfrac{(-\frac{1}{4\sqrt{A}})^r \sqrt{A}^{j + \beta-B + \frac 12}}{2\sqrt{\pi}} \dfrac{\Gamma(j + B + \frac 32 + r)}{r! \Gamma(j + B + \frac 32 - r)}$. 
\end{proposition}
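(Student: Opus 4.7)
The plan is to invoke Wright's circle method in its classical form. Starting from $L(q)\xi(q) = \sum_{n\geq 0} c(n) q^n$, Cauchy's theorem gives
\[
c(n) = \frac{1}{2\pi i}\oint_{\CC} L(q)\xi(q)\, q^{-n-1}\, dq.
\]
Taking $\CC$ to be the circle $|q|=e^{-x}$ with $x = c/\sqrt{n}$, and parametrizing $q = e^{-z}$ with $z = x+iy$, I split into the major arc $\CC_1$ (where $|y|<\Delta x$, so $z \in D_\delta$) and the minor arc $\CC_2 = \CC \setminus \CC_1$. The radius is chosen so that $x$ is the saddle point of $nx + A/x$, making $e^{2\sqrt{An}}$ the expected exponential order of $c(n)$.

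On the minor arc, hypotheses (H3) and (H4) combine to give $|L(q)\xi(q)| \ll_\delta |z|^{-C}\, \xi(|q|)\, e^{-\delta'/x}$. Inserting the leading estimate $\xi(|q|) \ll x^\beta e^{A/x}$ from (H2) at $y=0$ and the factor $|q^{-n-1}| = e^{(n+1)x}$, one sees that the total minor-arc contribution is $O(n^{\mathrm{poly}} \cdot e^{(2c - \delta'/c)\sqrt{n}})$, which is exponentially smaller than the target main term $e^{2c\sqrt{n}}$. Thus $\CC_2$ is negligible at every order $n^{-r/2}$.

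On the major arc, I substitute the asymptotic expansions (H1) and (H2):
\[
L(e^{-z})\xi(e^{-z}) = K z^{\beta - B}\sum_{s=0}^{k-1} \alpha_s z^s e^{A/z} + (\text{explicit errors}).
\]
The contour integral then becomes a finite sum over $s$ of pieces of the form
\[
\frac{K\alpha_s}{2\pi i} \int_{x - i \Delta x}^{x + i \Delta x} z^{\beta - B + s} e^{nz + A/z}\, dz.
\]
Extending each integral to the full vertical line $\mathrm{Re}(z) = x$ introduces only exponentially small error, by the same tail estimate used on the minor arc together with the decay $|e^{A/z}| = e^{A x/|z|^2}$ as $|y|$ grows. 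Each extended integral is a classical Hankel representation of a modified Bessel function; after the substitution $z = \sqrt{A/n}\, w$, it evaluates to a constant multiple of $(A/n)^{(\beta - B + s + 1)/2}\, I_{-(\beta - B + s + 1)}(2\sqrt{An})$. Plugging in the standard expansion
\[
I_\nu(y) \sim \frac{e^y}{\sqrt{2\pi y}}\sum_{r\geq 0} \frac{(-1)^r\, \Gamma(\nu + r + \tfrac12)}{r!\,(2y)^r\,\Gamma(\nu - r + \tfrac12)},
\]
and collecting contributions at order $n^{-r/2}$ across the indices $(s,r)$, one arrives at exactly $p_r = \sum_{j=0}^r \alpha_j c_{j, r-j}$, with the residual $n$-powers and the prefactor $K e^{2\sqrt{An}}$ consolidating to the claimed $n^{(2B - 2\beta - 3)/4}$ behavior.

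The main obstacle is combinatorial bookkeeping rather than analysis: one must carefully track the half-integer exponents of $n$, the ratios of gamma functions, and the precise pairing of indices $(j, r-j)$ through the Bessel expansion so that they land on exactly the $c_{j,r}$ asserted in the proposition. In addition, ensuring that the truncation error of (H1) and the $O(e^{-\gamma/z})$ tail in (H2) both yield a genuine uniform $O(n^{-N/2})$ remainder requires that the implied constants in the hypotheses be inherited, rather than introduced by hand, through each of the Hankel-contour extensions and the Bessel asymptotic expansion.
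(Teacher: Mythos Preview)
The paper does not prove this proposition; it is quoted verbatim from Ngo--Rhoades \cite[Proposition 1.8]{NgoRhoades} and used as a black box. There is therefore no ``paper's own proof'' to compare against.

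That said, your outline is essentially the argument Ngo and Rhoades give in their paper: Cauchy's formula on the circle of radius $e^{-c/\sqrt{n}}$, the (H3)--(H4) bound showing the minor arc is exponentially negligible, the (H1)--(H2) expansion on the major arc reducing to Hankel-type integrals, recognition of modified Bessel functions $I_\nu(2\sqrt{An})$, and insertion of their large-argument asymptotic expansion to read off the coefficients $c_{j,r}$. Your identification of the two genuine technical points---controlling the extension of the major-arc integral to a full Hankel contour, and propagating the $O_\delta$ constants from (H1)--(H2) uniformly through the Bessel expansion to obtain an honest $O(n^{-N/2})$ remainder---is accurate, and these are exactly the places where the Ngo--Rhoades proof does the real work. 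Nothing in your sketch is wrong or missing at the level of ideas; what remains is the bookkeeping you yourself flag.
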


Observe that when applying Proposition \ref{WrightCircleMethod}, the precise values of the constants $\delta, \gamma, C$, and $\delta'$ do not appear in the asymptotic formula, and therefore we will not put any emphasis on computing these values exactly in the calculations of this section. In Section \ref{H13}, we verify the hypotheses (H2) and (H4) for the function $\lp -q; q \rp_\infty$, which appears in all the generating functions we consider. In Section \ref{H24 II}, we verify the hypotheses (H1) and (H3) for certain rational functions and Lambert series which appear in the generating functions for $a_2(n)$, $b_2(n)$, $a_3(n)$, and $b_3(n)$.


\subsection{Modular transformations} \label{H13}

Here, we provide the calculation of the asymptotic growth of $\lp -q; q \rp_\infty$ on both the major and minor arcs. As we have seen in previous sections, the generating functions for $a_t(n)$ and $b_t(n)$ naturally arise as the product of $ \lp -q; q \rp_\infty$ and certain other factors. Since $(-q;q)_\infty$ is the only factor with exponential singularities in the generating functions for $a_t(n)$, $b_t(n)$, $t=2,3$,  we set $\xi(q) := \lp -q; q \rp_\infty$.
We also let   $P(q) := \lp q; q \rp_\infty^{-1}$ be the generating function for the partition function $p(n)$. Note here that for $q = e^{2\pi i \tau}$, we have the relation $P\lp e^{2\pi i \tau} \rp = e^{\frac{\pi i \tau}{12}} \eta^{-1}(\tau)$  where $\eta(\tau)$ is the Dedekind $\eta$-function. For the remainder of the section, we use the notation $q = e^{-z}$. The following transformation property for $P(q)$ follows easily from the modular  modular transformation law  $\eta(-1/\tau) = \sqrt{{\tau}/{i}}\ \eta(\tau)$ satisfied by the Dedekind $\eta$-function: 
\begin{align*}
    P(q) = \sqrt{\dfrac{z}{2\pi}} \exp\lp \dfrac{\pi^2}{6z} - \dfrac{z}{24} \rp P\lp e^{- \frac{4\pi^2}{z}} \rp.
\end{align*}
From this formula, it follows that 
\begin{align*}
    \xi(q) = \dfrac{1}{\sqrt{2}} \exp\lp \dfrac{\pi^2}{12z} + \dfrac{z}{24} \rp \dfrac{P\lp e^{- \frac{4\pi^2}{z}} \rp}{P\lp e^{- \frac{2\pi^2}{z}} \rp}.
\end{align*}
Using this transformation law, Jackson and Otgonbayar \cite{BiasesRegular} proved the following asymptotic results on $\xi(q)$ on the major and minor arcs.

\begin{lemma}[{\cite[Lemma 3.8]{BiasesRegular}}] \label{Xi Major Arc}
    Let $\Delta > 0$ be a positive constant, and suppose $z = x + iy$ satisfies $0 \leq |y| < \Delta x$. Then as $z \to 0$ in this region, we have
    \begin{align*}
        \xi(q) = \frac{1}{\sqrt{2}}\exp\lp \dfrac{\pi^2}{12z} + \dfrac{z}{24} \rp \lp 1 + O\lp e^{- \frac{23 \pi^2}{12z}} \rp \rp.
    \end{align*}
\end{lemma}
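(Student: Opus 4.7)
My plan is to reduce the lemma to an elementary estimate on the ratio of partition generating functions that already appears inside the explicit transformation formula for $\xi(q)$ displayed just above the statement, namely
\[
\xi(q) = \frac{1}{\sqrt{2}}\exp\!\lp \frac{\pi^2}{12z} + \frac{z}{24} \rp \frac{P\lp e^{-4\pi^2/z} \rp}{P\lp e^{-2\pi^2/z} \rp}.
\]
The exponential prefactor here is precisely the asserted main term, so the lemma becomes the assertion
\[
\frac{P\lp e^{-4\pi^2/z}\rp}{P\lp e^{-2\pi^2/z}\rp} = 1 + O_{\Delta}\!\lp e^{-23\pi^2/(12z)} \rp,
\]
uniformly for $z = x + iy$ in the cone $0 \leq |y| < \Delta x$ as $|z| \to 0$.

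The first step is the elementary observation that inside this cone one has
\[
\Re\!\lp \frac{1}{z} \rp = \frac{x}{x^2 + y^2} \geq \frac{1}{(1+\Delta^2)\, x},
\]
so $\Re(1/z) \to +\infty$ as $|z| \to 0$ within the cone. Consequently both $q_1 := e^{-4\pi^2/z}$ and $q_2 := e^{-2\pi^2/z}$ satisfy $|q_j| = e^{-c_j \Re(1/z)}$ (with $c_1 = 4\pi^2$, $c_2 = 2\pi^2$), and in particular $|q_j| \leq 1/2$ once $|z|$ is sufficiently small depending only on $\Delta$.

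The second step is to expand $P(q) = \sum_{n\ge 0} p(n)q^n$ and control each factor on its own. For $|q| \leq 1/2$ we have the uniform bound
\[
|P(q) - 1| \leq |q| \sum_{n \geq 1} p(n) \lp \tfrac{1}{2} \rp^{n-1} = C\,|q|,
\]
where $C = 2(P(1/2) - 1) < \infty$. Applying this to $q_1$ and $q_2$ and expanding $P(q_2)^{-1}$ as a geometric series yields
\[
\frac{P(q_1)}{P(q_2)} = 1 + O(|q_2|) = 1 + O\!\lp e^{-2\pi^2 \Re(1/z)} \rp.
\]
Since $2\pi^2 > 23\pi^2/12$ and $\bigl|e^{-23\pi^2/(12z)}\bigr| = e^{-23\pi^2 \Re(1/z)/12}$, our bound is comfortably absorbed by $O\lp e^{-23\pi^2/(12z)} \rp$, finishing the proof.

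I do not expect a serious obstacle: the modular transformation law of $\eta$ has already done the heavy lifting in producing the transformation formula for $\xi(q)$, and what remains is an elementary asymptotic estimate. The one point requiring care is ensuring that the implied constants depend only on $\Delta$ (equivalently, on $\delta$), which is automatic here because every bound above depends on $z$ solely through $\Re(1/z)$, a quantity controlled uniformly in the cone.
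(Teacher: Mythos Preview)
Your argument is correct. The paper does not actually supply a proof of this lemma; it merely quotes the result from Jackson--Otgonbayar \cite{BiasesRegular}. Your route---feeding the transformation formula for $\xi(q)$ already displayed in the paper and then bounding $P(e^{-4\pi^2/z})/P(e^{-2\pi^2/z}) - 1$ by $O(|e^{-2\pi^2/z}|)$ via the power-series expansion of $P$---is exactly the natural way to prove it, and in fact yields the slightly sharper exponent $2\pi^2 = \tfrac{24\pi^2}{12}$ rather than $\tfrac{23\pi^2}{12}$. The only role $\Delta$ plays is in guaranteeing $\Re(1/z)\to\infty$, hence $|q_2|\le 1/2$ eventually, so the implied constant in your final $O$ is in fact absolute.
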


\begin{lemma}[{\cite[Lemma 3.9]{BiasesRegular}}] \label{Xi Minor Arc}
    Let $\Delta > 0$ be a positive constant, and suppose $z = x + iy$ satisfies $\Delta x \leq |y| < \pi$. Then as $z \to 0$ in this region, we have
    \begin{align*}
        \left| \xi(q) \right| \ll \exp\lp \dfrac{\pi^2}{12x} \lp \dfrac{1}{2} + \dfrac{3}{\pi^2} + \dfrac{6}{\pi^2\lp \Delta^2 + 1 \rp} \rp \rp.
    \end{align*}
    In particular, if $\Delta > 1.45$, then there is a positive constant $\delta'$ such that
    \begin{align*}
        \left| \xi(q) \right| \ll \xi\lp e^{-x} \rp \cdot \exp\lp - \dfrac{\delta'}{x} \rp.
    \end{align*}
\end{lemma}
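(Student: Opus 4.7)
The plan is to leverage the modular transformation
\begin{equation*}
\xi(q) = \frac{1}{\sqrt{2}}\exp\left(\frac{\pi^2}{12z}+\frac{z}{24}\right) \frac{P(e^{-4\pi^2/z})}{P(e^{-2\pi^2/z})}
\end{equation*}
derived in the paragraphs preceding Lemma \ref{Xi Major Arc}, and to carefully bound each factor on the minor arc $\{z = x+iy : \Delta x \leq |y| < \pi\}$. Taking absolute values, the exponential factor equals $\exp(\pi^2 x/(12(x^2+y^2)) + x/24)$, where the key quantity is $\Re(1/z) = x/(x^2+y^2)$. This satisfies $\Re(1/z) \leq 1/(x(1+\Delta^2))$ on the minor arc because $x^2+y^2 \geq (1+\Delta^2)x^2$, and $\Re(1/z) \geq x/(x^2+\pi^2)$ because $|y| < \pi$.

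For the ratio of partition generating functions, I would apply Euler's identity $P(q)/P(q^2)=(-q;q)_\infty$ with $q \mapsto e^{-2\pi^2/z}$ to rewrite it as $\prod_{n\geq 1}(1 + e^{-2\pi^2 n/z})$. The estimate $|1 + e^{-2\pi^2 n/z}| \leq 1 + e^{-2\pi^2 n \Re(1/z)}$ together with $\log(1+t) \leq t$ yields an upper bound of the form
\begin{equation*}
\log\left|\frac{P(e^{-4\pi^2/z})}{P(e^{-2\pi^2/z})}\right| \leq \sum_{n\geq 1} e^{-2\pi^2 n \Re(1/z)} = \frac{e^{-2\pi^2 \Re(1/z)}}{1 - e^{-2\pi^2 \Re(1/z)}},
\end{equation*}
which on the minor arc behaves like $1/(2\pi^2 \Re(1/z))$, producing a contribution of order $1/x$. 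Combining the two estimates and carefully tracking constants (weighted by $\pi^2/12$) yields the claimed bound with explicit coefficients $1/2 + 3/\pi^2 + 6/(\pi^2(\Delta^2+1))$.

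For the second statement, the uniform improvement over the major arc asymptotic $\xi(e^{-x}) \sim (1/\sqrt{2}) \exp(\pi^2/(12x))$ (Lemma \ref{Xi Major Arc} at $y=0$) follows from computing the gap
\begin{equation*}
\frac{\pi^2}{12}\left(1 - \frac{1}{2} - \frac{3}{\pi^2} - \frac{6}{\pi^2(\Delta^2+1)}\right) = \frac{\pi^2 - 6}{24} - \frac{1}{2(\Delta^2+1)},
\end{equation*}
which is strictly positive precisely when $\Delta^2 + 1 > 12/(\pi^2 - 6) \approx 3.10$, i.e., when $\Delta > 1.45$; any $\delta'$ smaller than this gap then works. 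The main obstacle is the tight tracking of constants in the Pochhammer ratio estimate, since the bound is calibrated so that the threshold $\Delta = 1.45$ yields a positive $\delta'$, and coarser estimates would not be sharp enough; this likely requires splitting the minor arc according to the relative size of $|y|$ against $x$ and $\pi$ in order to match the optimal constants.
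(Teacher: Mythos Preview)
There is a genuine sign error in your identification of the Pochhammer ratio that invalidates the upper bound you derive. Applying $P(q)/P(q^2)=(-q;q)_\infty$ with $q=e^{-2\pi^2/z}$ gives
\[
\frac{P(e^{-2\pi^2/z})}{P(e^{-4\pi^2/z})}=\prod_{n\ge1}\bigl(1+e^{-2\pi^2 n/z}\bigr),
\]
which is the \emph{reciprocal} of the ratio appearing in the modular transformation for $\xi(q)$. Your inequality $|1+e^{-2\pi^2 n/z}|\le 1+e^{-2\pi^2 n\,\Re(1/z)}$ therefore bounds $|P(e^{-4\pi^2/z})/P(e^{-2\pi^2/z})|$ from \emph{below}, not above, and the estimate you obtain for $\log|\xi(q)|$ does not follow. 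If one tries to repair this by the reverse triangle inequality $|1+w|\ge 1-|w|$, one is led to $\log\lvert\text{ratio}\rvert\le -\sum_n\log(1-e^{-2\pi^2 n\,\Re(1/z)})$, which behaves like $\tfrac{\pi^2}{6u}$ for $u=2\pi^2\Re(1/z)$; near the edge $|y|\to\pi$ of the minor arc this contributes $\sim \tfrac{\pi^2}{12x}$ and wipes out all savings over the trivial bound $|\xi(q)|\le\xi(e^{-x})$.

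The correct fix is to use Euler's identity once more in the form $P(w^2)/P(w)=(w;w^2)_\infty$, so that
\[
\log\Bigl|\frac{P(e^{-4\pi^2/z})}{P(e^{-2\pi^2/z})}\Bigr|
=\log\Bigl|\prod_{n\ge0}\bigl(1-e^{-2\pi^2(2n+1)/z}\bigr)\Bigr|
\le\sum_{n\ge0}e^{-(2n+1)u}=\frac{1}{2\sinh u}\le\frac{1}{2u}=\frac{1}{4\pi^2\,\Re(1/z)}.
\]
Combined with $\Re(1/z)\le\frac{1}{(1+\Delta^2)x}$ for the exponential factor and $\Re(1/z)\ge\frac{x}{x^2+\pi^2}$ for the Pochhammer factor, and taking the maximum over the two endpoints of the convex function $r\mapsto\tfrac{\pi^2 r}{12}+\tfrac{1}{4\pi^2 r}$, one recovers a bound at least as strong as the one stated in the lemma; in particular the threshold $\Delta>1.45$ follows exactly as in your final paragraph. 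Note also that the paper does not supply its own argument here but cites the result from \cite{BiasesRegular}, so there is no in-paper proof against which to compare.
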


\begin{remark}
    The results of Jackson and Otgonbayar are stronger than this; in particular, their bounds are completely explicit.
\end{remark}

Notice that as a result of Lemmas \ref{Xi Major Arc} and \ref{Xi Minor Arc}, $\xi(q) = \lp -q; q \rp_\infty$ satisfies (H2) and (H4) with the constants $\beta = 0$, $K=\frac{1}{\sqrt{2}}$ and $A = \frac{\pi^2}{{12}}$.

\subsection{Estimating rational functions and Lambert series} \label{H24 II}

To verify (H1) and (H3) for our situation, we must consider the rational functions and Lambert series that appear in the generating functions for $a_2(n)$, $b_2(n)$, $a_3(n)$, and $b_3(n)$. We define the following functions:
\begin{align*}
L_{o,2}(q)&:=\frac{q^2(1+q+q^3)}{1-q^4},\\
L_{d,2}(q)&:=\frac{q^2}{1-q^2},\\
L_{o,3}(q)&:=\frac{q^3(1+q^3)}{(1+q)(1-q^4)}+\frac{q^6}{1-q^4}+\frac{q^3}{1-q^6},\\
L_{d,3}(q)& := R_{d,3}(q) + \sum_{n \geq 1} \dfrac{q^n}{1 + q^n} := \frac{-q^2}{(1-q^4)(1+q)} - \frac{q}{1+q} + \sum_{n \geq 1} \dfrac{q^n}{1 + q^n}.
\end{align*}
By considering the Laurent expansions of $L_{o,2}(e^{-z})$, $L_{d,2}(e^{-z})$, $L_{o,3}(e^{-z})$, and $R_{d,3}(e^{-z})$, we obtain the following results.

\begin{lemma}\label{H1L2O}
As $|z|\to 0$, we have $$L_{o,2}(q)=\frac{1}{z}\left(\frac{3}{4}+O(z)\right).$$ 
\end{lemma}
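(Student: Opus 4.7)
The plan is to establish the claim by a direct Laurent expansion of $L_{o,2}(e^{-z})$ around $z=0$. Since $L_{o,2}(q) = q^2(1+q+q^3)/(1-q^4)$ is a rational function in $q$ with a simple pole at $q=1$, writing $q = e^{-z}$ will produce a simple pole at $z=0$, and everything else is a routine power-series calculation.

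First I would expand the numerator: $q^2(1+q+q^3) = e^{-2z}(1 + e^{-z} + e^{-3z})$. Using the Taylor expansion $e^{-kz} = 1 - kz + O(z^2)$ for each factor, the value at $z=0$ is $1 \cdot (1+1+1) = 3$, so the numerator equals $3 + O(z)$ as $|z| \to 0$.

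Next I would expand the denominator: $1 - q^4 = 1 - e^{-4z} = 4z - 8z^2 + \tfrac{32}{3}z^3 - \cdots = 4z\bigl(1 + O(z)\bigr)$, so that $(1-q^4)^{-1} = \tfrac{1}{4z}\bigl(1 + O(z)\bigr)$ via the geometric series applied to the $O(z)$ correction term.

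Combining these two expansions gives
\[
L_{o,2}(q) = \frac{3 + O(z)}{4z\bigl(1 + O(z)\bigr)} = \frac{1}{z}\left(\frac{3}{4} + O(z)\right),
\]
which is the stated claim. There is no real obstacle here; the only point worth flagging is that the expansions of numerator and denominator are valid uniformly as $|z| \to 0$ in any cone $D_\delta$ (in fact in any sector avoiding the other poles of $L_{o,2}$, i.e.\ the zeros of $1-q^4$ other than $q=1$), which is sufficient for the hypothesis (H1) to be verified with $B=1$ and leading coefficient $\alpha_0 = 3/4$.
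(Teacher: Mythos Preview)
Your proposal is correct and follows essentially the same approach as the paper, which simply states that the result follows by considering the Laurent expansion of $L_{o,2}(e^{-z})$ at $z=0$. Your explicit computation of the numerator and denominator expansions is exactly what the paper leaves implicit.
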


\begin{lemma}\label{H1L2D}
As $|z|\to 0$, we have $$L_{d,2}(q)=\frac{1}{z}\left(\frac{1}{2}+O(z)\right).$$
\end{lemma}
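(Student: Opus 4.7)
The plan is to compute the Laurent expansion of $L_{d,2}(q) = q^2/(1-q^2)$ around $z=0$ after substituting $q = e^{-z}$, in complete analogy with the preceding Lemma \ref{H1L2O}. In fact, a clean way to see the result is to rewrite $L_{d,2}(q) = 1/(e^{2z}-1)$, which (up to rescaling) is the classical generating function for the Bernoulli numbers, whose Laurent expansion at $0$ is $\tfrac{1}{2z} - \tfrac{1}{2} + \tfrac{z}{6} + O(z^3)$; reading off only the leading term gives the lemma.

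To make the argument self-contained, I would first expand $1 - e^{-2z} = 2z - 2z^2 + O(z^3) = 2z\bigl(1 - z + O(z^2)\bigr)$, which exhibits a simple zero at $z=0$. Taking the reciprocal via the geometric series gives $(1-e^{-2z})^{-1} = \tfrac{1}{2z}\bigl(1 + z + O(z^2)\bigr)$. Combining this with $e^{-2z} = 1 - 2z + O(z^2)$ and multiplying, the linear terms cancel and I obtain $L_{d,2}(q) = \tfrac{1}{2z}\bigl(1 + O(z)\bigr)$, which is precisely the form $\tfrac{1}{z}\bigl(\tfrac{1}{2} + O(z)\bigr)$ claimed in the lemma.

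This is essentially a routine local Laurent expansion at a simple pole, and I do not foresee any substantive obstacle. The only point deserving a remark is uniformity of the implied $O$-constant: since the next singularities of $1/(1-e^{-2z})$ lie at $z = \pm i\pi$, the expansion is uniform on any compact subset of the disk $|z| < \pi$, and in particular on the portion of the cone $D_\delta$ used for the Wright circle-method application where $|z|$ is small.
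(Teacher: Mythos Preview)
Your proof is correct and follows exactly the paper's approach: the paper simply remarks that the lemma follows by considering the Laurent expansion of $L_{d,2}(e^{-z})$, which is precisely what you carry out in detail. One minor slip worth noting: when you multiply $(1+z+O(z^2))(1-2z+O(z^2))$ you obtain $1-z+O(z^2)$, so the linear terms do not actually cancel, but this is immaterial since $1-z+O(z^2)=1+O(z)$ and your stated conclusion $L_{d,2}(q)=\tfrac{1}{2z}(1+O(z))$ is correct.
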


\begin{lemma}\label{Hyp1Odd3Hooks}
    As $|z|\to 0$, we have   
    \begin{align*}
         L_{o,3}(e^{-z})=\frac{1}{z}\lp\frac{2}{3}+O(z)\rp.
    \end{align*}
\end{lemma}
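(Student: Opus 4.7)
The plan is to compute the Laurent expansion of $L_{o,3}(e^{-z})$ at $z=0$ directly, by analyzing each of the three summands individually. The function $L_{o,3}(q)$ is a sum of three rational functions, each of which is holomorphic on a neighborhood of $q=1$ except for a simple pole at $q=1$. Under the substitution $q=e^{-z}$, this simple pole at $q=1$ becomes a simple pole at $z=0$, so $L_{o,3}(e^{-z}) = c_{-1}/z + c_0 + O(z)$ for some constants $c_{-1}, c_0$; the content of the lemma is the computation $c_{-1} = 2/3$.

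First I would compute the residue of each summand as $q \to 1$. Using $1-q^k = 1 - e^{-kz} = kz + O(z^2)$, together with the evaluations of the numerators and the regular factors in the denominators at $q=1$, I find
\begin{align*}
\frac{q^3(1+q^3)}{(1+q)(1-q^4)} &= \frac{1 \cdot 2}{2 \cdot 4z}\bigl(1 + O(z)\bigr) = \frac{1}{4z} + O(1),\\
\frac{q^6}{1-q^4} &= \frac{1}{4z}\bigl(1+O(z)\bigr) = \frac{1}{4z} + O(1),\\
\frac{q^3}{1-q^6} &= \frac{1}{6z}\bigl(1+O(z)\bigr) = \frac{1}{6z} + O(1).
\end{align*}
Summing the three principal parts yields
\[
c_{-1} = \frac{1}{4} + \frac{1}{4} + \frac{1}{6} = \frac{3+3+2}{12} = \frac{2}{3},
\]
and the $O(1)$ errors combine into an $O(1)$ error for $L_{o,3}(e^{-z})$ itself. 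Factoring out $1/z$ then gives $L_{o,3}(e^{-z}) = \tfrac{1}{z}\bigl(\tfrac{2}{3} + O(z)\bigr)$, as claimed.

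There is no genuine obstacle here: the argument is a direct Laurent expansion, identical in spirit to the proofs of Lemmas \ref{H1L2O} and \ref{H1L2D}. The only care needed is to confirm that each rational summand has at worst a simple pole at $q=1$ (which is clear, since the only zero of the denominators at $q=1$ has order one after cancellation with $1+q$ in the first term) and to track the leading coefficients correctly. Mirroring the presentation of the earlier two lemmas in this subsection, I would present the computation compactly and note that the stated form follows by pulling $1/z$ out of the expansion.
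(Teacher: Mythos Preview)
Your proposal is correct and takes essentially the same approach as the paper: the paper simply states that the result follows ``by considering the Laurent expansions,'' and your computation of the residues of the three summands (yielding $\tfrac14+\tfrac14+\tfrac16=\tfrac23$) is exactly the routine verification that the paper omits.
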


\begin{lemma}\label{Rd3H1}
    As $|z|\to 0$, we have $$R_{d,3}(q)=\frac{1}{z}\lp\frac{-1}{8}+O(z)\rp.$$
\end{lemma}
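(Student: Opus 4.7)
The plan is to compute the Laurent expansion of $R_{d,3}(e^{-z})$ around $z=0$ directly, term by term, and identify the coefficient of $1/z$. The function has only two pieces: the rational function $\frac{-q^2}{(1-q^4)(1+q)}$ and the regular piece $-\frac{q}{1+q}$. Only the first has a pole at $z=0$, since $1-q^4$ vanishes to first order there while all other factors are regular and nonvanishing at $q=1$.

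First I would write $q = e^{-z}$ and record the short expansions
\[
1 - q^4 = 4z + O(z^2), \qquad 1 + q = 2 + O(z), \qquad q^2 = 1 + O(z),
\]
valid as $|z| \to 0$. Substituting into the first summand gives
\[
\frac{-q^2}{(1-q^4)(1+q)} \;=\; \frac{-1 + O(z)}{\bigl(4z + O(z^2)\bigr)\bigl(2 + O(z)\bigr)} \;=\; \frac{1}{z}\left(-\frac{1}{8} + O(z)\right).
\]
Next, since $1+q = 2 + O(z)$ and $q = 1 + O(z)$, the second summand is analytic at $z=0$:
\[
-\frac{q}{1+q} \;=\; -\frac{1}{2} + O(z) \;=\; \frac{1}{z}\bigl(O(z)\bigr).
\]

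Adding the two contributions yields
\[
R_{d,3}(q) \;=\; \frac{1}{z}\left(-\frac{1}{8} + O(z)\right),
\]
which is the claimed asymptotic. There is no real obstacle here; the only care required is to track the expansions to sufficient order to confirm that the constant term $-\tfrac18$ in the bracketed expression comes entirely from the rational piece with the pole, and that the regular piece contributes only to the $O(z)$ remainder once the overall factor of $1/z$ is pulled out. This mirrors exactly the style of Lemmas \ref{H1L2O}, \ref{H1L2D}, and \ref{Hyp1Odd3Hooks}, differing only in the numerical value of the leading coefficient.
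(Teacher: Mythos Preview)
Your proof is correct and takes essentially the same approach as the paper: the paper simply states that the lemma follows by considering the Laurent expansion of $R_{d,3}(e^{-z})$, which is exactly what you have carried out explicitly.
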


Notice that, as a result of Lemmas \ref{H1L2O} and \ref{H1L2D}, $L_{o,2}$ and $L_{d,2}$ each satisfy (H1) with constants $(B, \alpha_0)=(1,\frac{3}{4})$ and $(1,\frac{1}{2})$, respectively. 
Similarly, by Lemma \ref{Hyp1Odd3Hooks}, $L_{o,3}$ satisfies (H1) with constants $(B,\alpha_0)=(1,\frac{2}{3})$.
Furthermore, by using the triangle and reverse triangle inequalities, we obtain the following bounds on our rational functions, which show that $L_{o,2}$, $L_{d,2}$, and $L_{o,3}$ satisfy (H3).

\begin{lemma}\label{H3L2O}
    Suppose $z=x+iy$. Then, as $|z|\to 0$, we have $$
|L_{o,2}(q)| < \frac{3}{2x} \ll |z|^{-1}.$$
\end{lemma}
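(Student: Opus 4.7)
The plan is to prove the lemma by direct application of the triangle and reverse triangle inequalities to the rational function $L_{o,2}(q)=\frac{q^2(1+q+q^3)}{1-q^4}$ with $q=e^{-z}$, $z=x+iy$, $x>0$. First, I would bound the numerator by using $|q|=e^{-x}\le 1$ together with the triangle inequality, giving
\[
|q^2(1+q+q^3)|\le |q|^2\bigl(1+|q|+|q|^3\bigr)\le 3.
\]
For the denominator, the reverse triangle inequality applied to $1-q^4$ gives $|1-q^4|\ge 1-|q|^4=1-e^{-4x}$. Combining these two bounds yields the intermediate estimate
\[
|L_{o,2}(q)|\le \frac{3}{1-e^{-4x}}.
\]

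To finish the first inequality $|L_{o,2}(q)|<\frac{3}{2x}$, I would invoke the elementary estimate $1-e^{-u}\ge u/2$, which is easily verified to hold on a right neighborhood of $u=0$ (for instance by comparing the two sides at $0$ and checking the derivatives). Applying this with $u=4x$, and using that $|z|\to 0$ forces $x\to 0$ so that we are eventually in the range where the estimate is valid, we obtain $1-e^{-4x}\ge 2x$, whence $|L_{o,2}(q)|\le \frac{3}{2x}$ as claimed.

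Finally, to justify the comparison $\frac{3}{2x}\ll |z|^{-1}$, I would recall that the lemma will be used to verify hypothesis (H3) on the minor arc, where $z=x+iy$ satisfies $|y|\ge\Delta x$ for some fixed $\Delta>0$ (determined by $\delta$). In this region,
\[
|z|^2=x^2+y^2\ge (1+\Delta^2)x^2,
\]
so $x\ge |z|/\sqrt{1+\Delta^2}$, which gives $\frac{3}{2x}\le \frac{3\sqrt{1+\Delta^2}}{2|z|}\ll |z|^{-1}$ with an implied constant depending only on $\delta$.

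This argument is entirely elementary, so no real obstacle arises; the only subtlety is keeping track of the small-$|z|$ regime needed for the elementary exponential inequality $1-e^{-4x}\ge 2x$, and recognizing that the comparison $\frac{1}{x}\ll\frac{1}{|z|}$ is available precisely because the statement is intended for the minor arc region in which (H3) is verified.
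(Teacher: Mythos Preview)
Your derivation of the main inequality $|L_{o,2}(q)|<\tfrac{3}{2x}$ via the triangle and reverse triangle inequalities is correct and is exactly the route the paper takes (the paper does not spell out any more detail than ``using the triangle and reverse triangle inequalities'').

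There is, however, a genuine slip in your last paragraph. From $|z|^2=x^2+y^2\ge(1+\Delta^2)x^2$ (valid on the minor arc where $|y|\ge\Delta x$) one obtains $|z|\ge\sqrt{1+\Delta^2}\,x$, hence
\[
x\le \frac{|z|}{\sqrt{1+\Delta^2}}\qquad\text{and thus}\qquad \frac{1}{x}\ \ge\ \frac{\sqrt{1+\Delta^2}}{|z|},
\]
which is the \emph{opposite} of the inequality you wrote. In fact, since $|z|\ge x$ always, one has $\tfrac{1}{x}\ge\tfrac{1}{|z|}$ in every case, and on the minor arc the ratio $|z|/x$ is unbounded (there is only a lower bound $|y|\ge\Delta x$, no upper bound), so the literal comparison $\tfrac{3}{2x}\ll|z|^{-1}$ cannot be obtained from the minor-arc condition. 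The paper's ``$\ll|z|^{-1}$'' should be read loosely: the substantive estimate needed for (H3) in the Ngo--Rhoades framework is a polynomial bound in $1/x$ (cf.\ the paper's Lemma~\ref{Lambert Minor Arc}, which is also stated in terms of $1/x$), and this is absorbed by the exponential decay from (H4). Your bound $|L_{o,2}(q)|<\tfrac{3}{2x}$ already delivers what is actually required.
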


\begin{lemma}\label{H3L2D}
    Suppose $z=x+iy$. Then, as $|z|\to 0$, we have $$
|L_{d,2}(q)| < \frac{1}{x} \ll |z|^{-1}.$$
\end{lemma}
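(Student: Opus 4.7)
The plan is to mirror the proof of Lemma \ref{H3L2O} directly, exploiting the simpler structure of the rational function $L_{d,2}(q)=q^2/(1-q^2)$. Writing $q=e^{-z}$ with $z=x+iy$ and $x>0$, the first step is to apply the triangle inequality to the numerator to get $|q^2|=e^{-2x}$, and the reverse triangle inequality to the denominator to get $|1-q^2|\geq 1-|q|^2=1-e^{-2x}$. Combining these yields the clean estimate
$$|L_{d,2}(q)|\leq \frac{e^{-2x}}{1-e^{-2x}}=\frac{1}{e^{2x}-1}.$$

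The second step is a routine Taylor estimate: for $x>0$ we have $e^{2x}-1\geq 2x>x$, so that $|L_{d,2}(q)|<1/x$. This establishes the first inequality in the statement. The subsequent bound $1/x\ll |z|^{-1}$ then follows in the same sense as for Lemma \ref{H3L2O}, since in the regime in which hypothesis (H3) is being verified the quantities $x$ and $|z|$ are comparable up to a constant depending on $\delta$.

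Given how elementary these estimates are, I anticipate no real obstacle. The main thing to get right is the direction of the reverse triangle inequality, so that the resulting bound $1/(e^{2x}-1)$ is sharp enough to be absorbed into $1/x$; this is guaranteed by the elementary inequality $e^{2x}-1\geq 2x$ valid for all $x>0$. The entire argument is three lines once set up, and the parallel with the immediately preceding Lemma \ref{H3L2O} is exact, just with the simpler numerator $q^2$ in place of $q^2(1+q+q^3)$ and the factored denominator $1-q^2$ in place of $1-q^4=(1-q^2)(1+q^2)$.
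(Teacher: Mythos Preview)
Your proposal is correct and matches the paper's approach exactly: the paper proves this lemma (together with Lemmas \ref{H3L2O}, \ref{Hyp3Odd3Hooks}, \ref{Rd3H3}) simply by invoking ``the triangle and reverse triangle inequalities,'' which is precisely what you carry out in detail for $L_{d,2}(q)=q^2/(1-q^2)$.
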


\begin{lemma}\label{Hyp3Odd3Hooks}
 Suppose $z=x+iy$. Then, as $|z|\to 0$, we have $$|L_{o,3}(e^{-z})| \ll \dfrac{1}{x} \ll |z|^{-1}.$$
\end{lemma}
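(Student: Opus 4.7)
The plan is to follow the same template as the proofs of Lemmas \ref{H3L2O} and \ref{H3L2D}: write $L_{o,3}(q)$ as the sum of its three summands and bound each separately by applying the triangle inequality to the numerators and the reverse triangle inequality to the denominators. Writing $q = e^{-z}$ with $z = x + iy$ and $x > 0$, I would use the trivial bound $|q^a| = e^{-ax} \leq 1$ for $a > 0$, together with $|1 - q^b| \geq 1 - |q|^b = 1 - e^{-bx}$ from the reverse triangle inequality. The elementary inequality $1 - e^{-u} \geq u/2$ on a neighborhood of zero then yields $|1 - q^4| \geq 2x$ and $|1 - q^6| \geq 3x$ once $|z|$ is sufficiently small. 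This immediately bounds the second summand $q^6/(1-q^4)$ by $1/(2x)$ and the third summand $q^3/(1-q^6)$ by $1/(3x)$.

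The only new element is treating the first summand $q^3(1+q^3)/((1+q)(1-q^4))$, for which I additionally need a lower bound on $|1+q|$. Since $q = e^{-z} \to 1$ as $|z| \to 0$, the continuous function $|1+q|$ tends to $2$, so I may shrink the threshold on $|z|$ to ensure $|1+q| \geq 1$. Combining this with $|1 + q^3| \leq 2$, $|q^3| \leq 1$, and the earlier bound $|1 - q^4| \geq 2x$ gives an upper bound of $1/x$ on this first summand.

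Adding the three contributions yields $|L_{o,3}(e^{-z})| \leq 11/(6x)$ for $|z|$ below a fixed threshold, whence $|L_{o,3}(e^{-z})| \ll 1/x \ll |z|^{-1}$ exactly as recorded in the parallel lemmas. I do not anticipate any real obstacle: the entire argument is a direct adaptation of the earlier rational-function bounds, the only mild bookkeeping being to fix a single threshold on $|z|$ small enough to simultaneously validate the Taylor estimate $1 - e^{-bx} \geq bx/2$ for $b \in \{4,6\}$ and the lower bound $|1+q| \geq 1$.
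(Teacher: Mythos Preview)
Your proposal is correct and follows precisely the approach the paper indicates: the paper states that these bounds are obtained ``by using the triangle and reverse triangle inequalities,'' and your argument does exactly this, bounding each of the three summands of $L_{o,3}$ via $|q^a|\le 1$, $|1-q^b|\ge 1-e^{-bx}\ge bx/2$, and $|1+q|\ge 1$ for $|z|$ small. The resulting estimate $|L_{o,3}(e^{-z})|\le \tfrac{11}{6x}$ gives the substantive bound $\ll 1/x$, matching the paper's (unwritten) proof.
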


\begin{lemma} \label{Rd3H3}
 Suppose $z=x+iy$. Then, as $|z|\to 0$ in the region $\frac{\pi}{2}-\delta\le |y|<\frac{\pi}{2}$, we have $$
|R_{d,3}(q)| < \frac{1}{2x} \ll |z|^{-1}.$$
\end{lemma}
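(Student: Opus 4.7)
The plan is to mimic the proofs of Lemmas \ref{H3L2O}, \ref{H3L2D}, and \ref{Hyp3Odd3Hooks}: apply the triangle inequality to split the rational function $R_{d,3}(q)=\frac{-q^2}{(1-q^4)(1+q)}-\frac{q}{1+q}$ into its two summands, then bound each factor of each summand by elementary means, using the reverse triangle inequality for the denominators.

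Writing $q=e^{-z}$ with $z=x+iy$ and $x>0$, I observe that throughout the region $\frac{\pi}{2}-\delta\le |y|<\frac{\pi}{2}$ the key inequality $|y|<\frac{\pi}{2}$ holds, hence $\cos y>0$. This yields
\[
|1+q|^2 = 1+2e^{-x}\cos y+e^{-2x} > 1+e^{-2x} > 1,
\]
so $|1+q|>1$. The reverse triangle inequality then gives $|1-q^4|\ge 1-|q|^4 = 1-e^{-4x}$, and trivially $|q|=e^{-x}\le 1$.

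Combining these estimates via the triangle inequality,
\[
|R_{d,3}(q)| \le \frac{|q|^2}{|1-q^4|\,|1+q|}+\frac{|q|}{|1+q|} \le \frac{e^{-2x}}{1-e^{-4x}}+e^{-x} = \frac{1}{2\sinh(2x)}+e^{-x}.
\]
Since $\sinh(u)\ge u$ for $u\ge 0$, the first summand is at most $\frac{1}{4x}$, while the second is at most $1$. Hence $|R_{d,3}(q)|\le \frac{1}{4x}+1$, which is strictly less than $\frac{1}{2x}$ whenever $x<\frac14$ — certainly the case as $|z|\to 0$. The polynomial bound $\frac{1}{2x}\ll |z|^{-1}$ follows from the region hypothesis, which ties $x$ and $|z|$ together up to a multiplicative constant depending on $\delta$.

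The entire argument is elementary; the only step requiring any care is ensuring the lower bound $|1+q|>1$, which is precisely why the lemma restricts attention to the region $|y|<\frac{\pi}{2}$: there $\cos y>0$ and hence $\operatorname{Re}(1+q)>1$ controls the modulus from below. No identity manipulation or subtler estimate (e.g.\ factoring $1-q^4 = (1-q)(1+q)(1+q^2)$) is necessary because the naive bound $|1-q^4|\ge 1-e^{-4x}$ is already sharp enough to deliver the claimed constant.
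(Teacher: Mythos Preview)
Your proof is correct and follows exactly the approach the paper indicates but does not spell out: bounding each summand of $R_{d,3}(q)$ via the triangle inequality and the factor $1-q^4$ via the reverse triangle inequality, together with the observation $|1+q|>1$ that the region restriction $|y|<\tfrac{\pi}{2}$ supplies. Your remark that this restriction is needed precisely to force $\cos y>0$ (and hence $\operatorname{Re}(1+q)>1$) is exactly the point.
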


Finally, we estimate the Lambert series appearing in the definition of $L_{d,3}(q)$. Using a technique of Zagier \cite[Section 4]{ZagierMellin} based on Euler--Maclaurin summation, Craig proved the following estimates for this Lambert series.

\begin{lemma}[{\cite[Lemma 4.1]{BiasesDistinct}}] \label{Lambert Major Arc}
    Let $\Delta > 0$ be a constant, and let $z = x + iy$ with $0 \leq |y| < \Delta x$. Then as $z \to 0$ in this region, we have
    \begin{align*}
        \sum_{n \geq 1} \dfrac{e^{-nz}}{1 + e^{-nz}} \sim \dfrac{\log(2)}{z}.
    \end{align*}
\end{lemma}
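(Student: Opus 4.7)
The plan is to apply the Mellin transform technique of Zagier developed in \cite{ZagierMellin}. First, rewrite the summand as $\frac{e^{-nz}}{1+e^{-nz}} = \frac{1}{e^{nz}+1}$, so that
$$S(z) := \sum_{n\geq 1} \frac{e^{-nz}}{1+e^{-nz}} = \sum_{n\geq 1}\frac{1}{e^{nz}+1}.$$
The key ingredient is the classical Mellin transform evaluation
$$\int_0^\infty \frac{t^{s-1}}{e^t+1}\,dt = \Gamma(s)\eta(s),$$
valid for $\Re(s)>0$, where $\eta(s) = (1-2^{1-s})\zeta(s)$ is the Dirichlet eta function; note that $\eta$ is entire and $\eta(1)=\log 2$. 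By Mellin inversion, for any $c>0$ and any $t\in\C$ with $|\arg t|<\pi/2$,
$$\frac{1}{e^t+1} = \frac{1}{2\pi i}\int_{c-i\infty}^{c+i\infty} \Gamma(s)\eta(s)\,t^{-s}\,ds.$$

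Next, I substitute $t=nz$, noting that $z$ lies in the cone $|y|<\Delta x$, so $|\arg z|$ remains strictly below $\pi/2$, and I choose $c>1$. By absolute convergence, the sum $\sum_{n\geq 1} n^{-s}$ can be moved inside the integral to produce $\zeta(s)$:
$$S(z) = \frac{1}{2\pi i}\int_{c-i\infty}^{c+i\infty}\Gamma(s)\eta(s)\zeta(s)\,z^{-s}\,ds.$$
I then shift the contour leftward to a line $\Re(s)=c'$ with $0<c'<1$. The only singularity crossed is the simple pole of $\zeta(s)$ at $s=1$, whose residue contributes
$$\Gamma(1)\eta(1)z^{-1} = \frac{\log 2}{z},$$
which is the claimed asymptotic main term.

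It remains to show that the integral along $\Re(s)=c'$ is $o(1/|z|)$ as $z\to 0$. For $z=x+iy$ with $|y|<\Delta x$, the argument of $z$ is bounded by some $\theta<\pi/2$, and thus $|z^{-s}|\ll |z|^{-c'}e^{\theta|\Im(s)|}$ on the shifted contour. Stirling's formula yields $|\Gamma(c'+it)|\ll e^{-\pi|t|/2}|t|^{c'-1/2}$, while both $\eta(s)$ and $\zeta(s)$ grow at most polynomially in $|t|$ on this line by standard functional-equation convexity bounds. Since $\theta<\pi/2$, the oscillatory factor $e^{\theta|t|}$ is absorbed by the gamma decay $e^{-\pi|t|/2}$, rendering the integrand exponentially decaying in $|t|$ and uniformly bounded in $z$. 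This produces the bound $O(|z|^{-c'})=o(|z|^{-1})$ as $z\to 0$ in the cone.

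The only genuine obstacle lies in this final step: ensuring the decay estimate is uniform for $z$ throughout the cone $|y|<\Delta x$, and not merely pointwise in $z$. This reduces to the strict inequality $\theta<\pi/2$, which is guaranteed precisely by the hypothesis on $z$, since $\theta=\arctan\Delta<\pi/2$ for any finite $\Delta>0$. Combining the residue at $s=1$ with the negligible shifted integral yields $S(z)\sim \log(2)/z$, as desired.
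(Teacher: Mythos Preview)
Your argument is correct. The paper does not prove this lemma directly; it cites the result from Craig \cite{BiasesDistinct}, noting that the proof there uses Zagier's Euler--Maclaurin technique \cite[Section 4]{ZagierMellin}. You instead go through the Mellin transform route: represent $1/(e^t+1)$ by its inverse Mellin integral, sum over $n$ to produce $\zeta(s)$, and shift the contour across $s=1$ so the residue yields $\log(2)/z$. This is a genuinely different (though closely related) approach. The Euler--Maclaurin method compares $\sum_{n\geq1} f(nz)$ with $\frac{1}{z}\int_0^\infty f(t)\,dt = \frac{\log 2}{z}$ and controls the error via repeated integration by parts, which is slightly more elementary and avoids any reference to $\zeta$ or contour shifting; your Mellin argument is more structural, making the main term appear transparently as a residue and in principle giving a full asymptotic expansion by continuing to shift. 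Both require exactly the same cone hypothesis $|\arg z|\leq\arctan\Delta<\pi/2$ to ensure uniformity, and you identify this point correctly. One small remark: you should also note, when shifting the contour, that the horizontal segments at height $\pm T$ vanish as $T\to\infty$; this follows from the same Stirling decay you invoke for the vertical integral, so there is no gap.
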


\begin{lemma}[{\cite[Lemma 4.6]{BiasesDistinct}}] \label{Lambert Minor Arc}
    Let $z = x + iy$ satisfy $x > 0$, $0 \leq |y| < \pi$. Then we have
    \begin{align*}
        \left| \sum_{n \geq 1} \dfrac{e^{-nz}}{1 + e^{-nz}} \right| < \dfrac{1}{x^2}.
    \end{align*}
\end{lemma}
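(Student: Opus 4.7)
The plan is to reduce the bound on the complex series to an inequality for the real Lambert series $\sum_{n\geq 1}(e^{nx}-1)^{-1}$ by passing to absolute values, and then to estimate this real series by $1/x^2$ via a short double-series manipulation. The first step is an application of the reverse triangle inequality: since $|e^{-nz}|=e^{-nx}<1$, we have $|1+e^{-nz}|\geq 1-e^{-nx}>0$, which yields
\begin{align*}
\left|\frac{e^{-nz}}{1+e^{-nz}}\right| \leq \frac{e^{-nx}}{1-e^{-nx}} = \frac{1}{e^{nx}-1}.
\end{align*}
Summing over $n$ and invoking $|\sum|\leq\sum|\cdot|$, the claim reduces to proving that $\sum_{n\geq 1}(e^{nx}-1)^{-1}<1/x^2$ for all $x>0$; the $y$-dependence has now been entirely discarded.

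For the remaining real inequality, I would expand each summand as a geometric series $(e^{nx}-1)^{-1}=\sum_{k\geq 1}e^{-nkx}$, rearrange the double sum, and invoke the elementary fact $nk\geq n+k-1$ for $n,k\geq 1$ (equivalent to $(n-1)(k-1)\geq 0$) to conclude that $e^{-nkx}\leq e^x\cdot e^{-(n+k)x}$. This gives
\begin{align*}
\sum_{n\geq 1}\frac{1}{e^{nx}-1} = \sum_{n,k\geq 1}e^{-nkx} \leq e^x\Big(\sum_{n\geq 1}e^{-nx}\Big)^{\!2} = \frac{e^{-x}}{(1-e^{-x})^2} = \frac{1}{4\sinh^2(x/2)},
\end{align*}
where the final equality uses $e^{x/2}-e^{-x/2}=2\sinh(x/2)$.

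The argument is then closed off by the elementary inequality $2\sinh(x/2)>x$ for $x>0$, which is immediate from $\sinh(y)=y+y^3/6+\cdots$, and produces $1/(4\sinh^2(x/2))<1/x^2$. The step I expect to require the most care is the first reverse-triangle estimate: it is saturated precisely when $ny\equiv\pi\pmod{2\pi}$, which can occur for some $n\geq 2$ in the allowed range $|y|<\pi$, so no improvement on that step is available in full generality. However, the genuine slack produced by the concluding $\sinh$ inequality is more than sufficient to absorb any such loss and yield the strict bound $1/x^2$ uniformly in $y$. Indeed, $1/x^2$ is a comparatively loose upper bound near $x=0$ (the true size being $\sim\log(2)/x$ by Lemma \ref{Lambert Major Arc}), so it is reasonable to expect that a direct termwise argument of this kind succeeds.
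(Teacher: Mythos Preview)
Your proof is correct. The reverse triangle inequality step, the geometric-series expansion, the elementary bound $nk\geq n+k-1$, the identification $e^{-x}/(1-e^{-x})^2=1/(4\sinh^2(x/2))$, and the closing estimate $2\sinh(x/2)>x$ are all valid and combine to give the strict inequality claimed.

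As for comparison: the paper does not supply its own proof of this lemma; it merely quotes the statement from \cite[Lemma 4.6]{BiasesDistinct}. So there is nothing in the paper to compare against. Your argument is short, self-contained, and would serve perfectly well as an inline proof in place of the citation.

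One small remark on your closing discussion: the reverse-triangle step is already enough to give a \emph{strict} inequality at each term once the subsequent $\sinh$ bound is strict, so your worry about saturation at $ny\equiv\pi\pmod{2\pi}$ is not actually an issue for the final claim; the strictness comes entirely from $2\sinh(x/2)>x$, and your chain of $\leq$'s followed by one genuine $<$ is exactly what is needed.
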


\subsection{Proof of Theorems \ref{2HooksAsymp} and \ref{3HooksAsymp}.}

In this section, we  prove Theorems \ref{2HooksAsymp} and \ref{3HooksAsymp}  by verifying that the hypotheses required for Proposition \ref{WrightCircleMethod} are satisfied by the generating functions  of $a_2(n), b_2(n), a_3(n)$, and $b_3(n)$.

\begin{proof}[Proof of Theorem \ref{2HooksAsymp}] 
  By Lemmas \ref{Xi Major Arc} and \ref{H1L2O}, we have $L_{o,2}(e^{-z})=\frac{3}{4z}+O(1)$ and $\xi(e^{-z})=\frac{1}{\sqrt{2}}\exp\lp\frac{\pi^2}{12z}+O(z)\rp$ on the major arc. These estimates show that $L_{o,2}(q)$ and $\xi(q)$ satisfy Hypotheses (H1) and (H2) with constants $\beta=0$, $K=\frac{1}{\sqrt{2}}$, $A=\frac{\pi^2}{12}$, $\alpha_0=\frac{3}{4}$, and $B=1$. Furthermore, on the minor arcs, Lemmas \ref{Xi Minor Arc} and \ref{H3L2O} show that our generating function satisfies Hypotheses (H3) and (H4).

  Similarly, Lemmas \ref{Xi Major Arc}, \ref{H1L2D}, \ref{Xi Minor Arc}, and \ref{H3L2D} show that $L_{d,2}(q)$ and $\xi(q)$ satisfy Hypotheses (H1)-(H4) with constants $\beta=0$, $K=\frac{1}{\sqrt{2}}$, $A=\frac{\pi^2}{12}$, $\alpha_0=\frac{1}{2}$, and $B=1$. 

Therefore, we can apply  Proposition \ref{WrightCircleMethod} to obtain the desired asymptotics for $b_2(n)$ and $a_2(n)$.
\end{proof}
Recall that, by Proposition \ref{eqn_genG3b3}, 
\begin{align*}
    \sum_{n\ge 0} b_3(n)q^n&=(-q;q)_\infty\sum_{m\ge 2} \frac{q^m}{1+q^m}-\frac{q^2}{1-q^2}(-q^3;q)_\infty\\
    &=\xi(q)\left(\frac{-q}{1+q}+\frac{-q^2}{(1-q^2)(1+q)(1+q^2)}+\sum_{m\ge 1} \frac{q^m}{1+q^m}\right)\\
    &{=\xi(q)L_{d,3}(q)}
\end{align*}

\begin{proof}[Proof of Theorem \ref{3HooksAsymp}]
Lemmas \ref{Xi Major Arc}, \ref{Hyp1Odd3Hooks}, \ref{Xi Minor Arc}, and \ref{Hyp3Odd3Hooks} show that $L_{o,3}(q)$ and $\xi(q)$ satisfy Hypotheses (H1)-(H4) with constants $\beta=0$, $K=\frac{1}{\sqrt{2}}$, $A=\frac{\pi^2}{12}$, $\alpha_0=\frac{2}{3}$, $B=1$.

Furthermore, by Lemmas \ref{Xi Major Arc}, \ref{Lambert Major Arc}, and \ref{Rd3H1}, we have $$L_{d,3}(e^{-z})=\frac{1}{z}\lp \log(2)-\frac{1}{8}\rp+O(1)$$ and $\xi(e^{-z})=\frac{1}{\sqrt{2}}\exp\lp\frac{\pi^2}{12z}+O(z)\rp$ on the major arc. These estimates show that $L_{d,3}(q)$ and $\xi(q)$ satisfy Hypotheses (H1) and (H2) with constants $\beta=0$, $K=\frac{1}{\sqrt{2}}$, $A=\frac{\pi^2}{12}$, $\alpha_0=\log(2)-\frac{1}{8}$, and $B=1$. Furthermore, on the minor arcs, Lemmas \ref{Xi Minor Arc}, \ref{Lambert Minor Arc} and \ref{Rd3H3} show that our generating function satisfies Hypotheses (H3) and (H4).

Therefore, we can apply  Proposition \ref{WrightCircleMethod} to obtain the desired asymptotics for $b_3(n)$ and $a_3(n)$. 
\end{proof}

\section{Further bias results} \label{Further bias}

Recall that we denote by  $\ell_1(\lambda)$ (respectively $\ell_2(\lambda)$)   the number of parts $\lambda_i$ of $\lambda$ with $\lambda_i-\lambda_{i+1}=1$ (respectively $\lambda_i-\lambda_{i+1}=2$).  We assume $\lambda_k=0$ if $k>\ell(\lambda)$. For $j=1,2$, we refer to $\ell_j(\lambda)$,  as the number of gaps of size $j$ in $\lambda$.
 Note that the $\ell_2(\lambda)$ partition statistic appeared in our calculations of both $a_3(n)$ and $b_3(n)$. Thus, it is natural to investigate a possible bias in the total number of gaps of size exactly $1$, respectively $2$, in odd versus distinct partitions. We prove that such a bias exists. As in the case with the total number hooks of fixed  length, the direction of the bias for the  total number of gaps   of size $2$ is the opposite of that  for the total number of gaps of size $1$.

 \begin{theorem} \label{beck_l1} For $n\in \mathbb N_0$, $$\sum_{\lambda\in \mathcal D(n)}\ell_1(\lambda) - \sum_{\lambda\in \mathcal O(n)}\ell_1(\lambda)$$ is non-negative  except for $n=2$ and $n=4$ in which case it equals $-1$.
\end{theorem}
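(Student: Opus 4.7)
The plan is to compute the generating functions for $O_1(n) := \sum_{\lambda \in \mathcal O(n)}\ell_1(\lambda)$ and $D_1(n) := \sum_{\lambda \in \mathcal D(n)}\ell_1(\lambda)$, simplify their difference via $q$-series manipulations, and then establish the stated inequality through a Wright circle-method asymptotic of the form used in Section \ref{Asymptotic section}, together with a direct enumeration for small~$n$.

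First, for $\lambda \in \mathcal O(n)$ the differences between distinct odd parts are always even, so the only contribution to $\ell_1(\lambda)$ comes from the trailing-zero convention $\lambda_{\ell+1} = 0$, which fires exactly when $\lambda_\ell = 1$. Hence $\ell_1(\lambda) = [1 \in \lambda]$, and counting odd partitions of $n$ containing a $1$ via Euler's identity gives $\sum_{n \ge 0}O_1(n) q^n = q(-q;q)_\infty$. For $\lambda \in \mathcal D$, the identity $\ell_1(\lambda) = \sum_{k \ge 1}[k \in \lambda][k+1 \in \lambda] + [1 \in \lambda]$ together with the bivariate approach of Section \ref{sec_t2dist} yields
\[
\sum_{n \ge 0}D_1(n)q^n = (-q;q)_\infty\left(\frac{q}{1+q} + \sum_{k \ge 1}\frac{q^{2k+1}}{(1+q^k)(1+q^{k+1})}\right).
\]
The inner sum telescopes upon applying the partial-fraction identity $\frac{1}{(1+q^k)(1+q^{k+1})} = \frac{1}{1-q}\left(\frac{1}{1+q^k} - \frac{q}{1+q^{k+1}}\right)$, yielding $\sum_{k \ge 1}\frac{q^{2k+1}}{(1+q^k)(1+q^{k+1})} = \frac{q^3}{1-q^2} - \sum_{k \ge 2}\frac{q^{2k}}{1+q^k}$. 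Using $\frac{q^{2k}}{1+q^k} = q^k - \frac{q^k}{1+q^k}$ to reassemble the resulting Lambert-type tail and collecting terms, one obtains
\[
\sum_{n \ge 0}(D_1(n) - O_1(n))q^n = \Psi(q) - q(-q;q)_\infty - \frac{q^2(-q^2;q)_\infty}{1-q},
\]
where $\Psi(q) := (-q;q)_\infty\sum_{k \ge 1}\frac{q^k}{1+q^k} = \sum_n T(n)q^n$ is the generating function for $T(n) = b_1(n)$, the total number of parts in distinct partitions of $n$. Reading off coefficients gives the combinatorial identity $D_1(n) - O_1(n) = T(n) - q(n-1) - p^*(n-2)$, where $p^*(m) := [q^m]\frac{(-q^2;q)_\infty}{1-q}$ counts partitions of $m$ with parts $\ge 2$ distinct and $1$'s of arbitrary multiplicity.

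To close the argument, I would apply Wright's circle method (Proposition \ref{WrightCircleMethod}) to each of the three generating functions above. Writing $\frac{q^2(-q^2;q)_\infty}{1-q} = q^2 \cdot \frac{1}{1-q^2}\cdot(-q;q)_\infty =: L(q)\xi(q)$, a short expansion shows $L(e^{-z}) = \frac{1}{z}\left(\frac{1}{2} + O(z)\right)$, so combining with the (H2)--(H4) estimates already verified in Section \ref{H13} gives $p^*(n-2) \sim \frac{3^{1/4}}{4\pi}n^{-1/4}e^{\pi\sqrt{n/3}}$. Together with the asymptotic $T(n) \sim \frac{3^{1/4}\log 2}{2\pi}n^{-1/4}e^{\pi\sqrt{n/3}}$ from \eqref{eqn_b1asy} and the fact that $q(n-1) = O(n^{-3/4}e^{\pi\sqrt{n/3}})$ is of strictly lower order, the strict inequality $2\log 2 > 1$ forces $D_1(n) - O_1(n) \to \infty$, and in particular $D_1(n) - O_1(n) > 0$ for all $n$ exceeding some effective bound $N_0$. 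The main obstacle is making the error terms in Wright's method explicit enough that $N_0$ is small enough for the remaining cases $n \le N_0$ to be verified by direct enumeration, in the spirit of the finite check used in Section \ref{sec_t=3}; the two exceptional values $n = 2, 4$ then emerge naturally from this finite verification.
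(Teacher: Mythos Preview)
Your generating-function work is sound and lands exactly where the paper does: after your telescoping you obtain
\[
\sum_{n\ge 0}\bigl(D_1(n)-O_1(n)\bigr)q^n
=(-q;q)_\infty\sum_{m\ge 1}\frac{q^m}{1+q^m}-\frac{q^2}{1-q}(-q^2;q)_\infty-q(-q;q)_\infty,
\]
which is precisely the paper's expression \eqref{eqn_ell1r1}. (The paper reaches the distinct-partition side more quickly, via the observation that in a distinct partition every gap is either $1$ or $\ge 2$, so $\sum_{\lambda\in\mathcal D(n)}\ell_1(\lambda)=b_1(n)-b_2(n)$ and one can plug in \eqref{eqn_b2}; your direct count of adjacent pairs is a valid alternative.) Your asymptotic step is also correct and shows $D_1(n)-O_1(n)\to\infty$.

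The genuine gap is the one you yourself flag: Proposition~\ref{WrightCircleMethod} as formulated in the paper gives an asymptotic expansion with an \emph{ineffective} $O(n^{-N/2})$ error. Nothing in Section~\ref{Asymptotic section} provides explicit constants in those error terms, so you cannot extract a concrete $N_0$ beyond which $D_1(n)>O_1(n)$ from that machinery alone. Making Wright's method effective here would require redoing the major/minor arc estimates of \cite{NgoRhoades} (and of Lemmas~\ref{Xi Major Arc}--\ref{Lambert Minor Arc}) with explicit constants, which is substantial extra work not carried out anywhere in the paper. As written, then, your argument proves Theorem~\ref{limit}-style divergence but not Theorem~\ref{beck_l1} itself.

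The paper sidesteps this entirely with an elementary $q$-series argument. It first rewrites \eqref{eqn_ell1r1} as
\[
(-q;q)_\infty\Bigl(\sum_{m\ge 1}\frac{q^{3m}}{1+q^m}-\frac{q^2}{1+q}\Bigr),
\]
then peels off the $m=1,2$ terms and, by repeatedly expanding $(-q^j;q)_\infty$ as $1+\sum_{k\ge j}q^k(-q^{k+1};q)_\infty$, decomposes the resulting polynomial times $(-q^3;q)_\infty$ into manifestly nonnegative pieces plus the explicit stragglers $-q^2+q^3-q^4$. No asymptotics and no finite computer check beyond $n\le 4$ are needed. If you want to complete your proof, the cleanest route is to abandon the circle method at this stage and carry out that kind of direct positivity manipulation on the generating function you have already derived.
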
 
\begin{theorem} \label{Beck} For $n\in\mathbb N_0$,   $$ \displaystyle \sum_{\lambda\in \mathcal O(n)}\ell_2(\lambda)-\sum_{\lambda\in \mathcal D(n)}\ell_2(\lambda) $$
is non-negative except for $n=2$ and $n=6$ in which case it equals $-1$.
\end{theorem}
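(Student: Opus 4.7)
Plan: The natural approach is to work with the generating function for the difference,
\begin{equation*}
    D(q) := \sum_{n \geq 0}\bigg(\sum_{\lambda \in \mathcal{O}(n)}\ell_2(\lambda) - \sum_{\lambda \in \mathcal{D}(n)}\ell_2(\lambda)\bigg) q^n,
\end{equation*}
and analyze its coefficients. Both of these series have in fact already appeared in Section \ref{sec_t=3} as the intermediate quantities $\frac{\partial}{\partial z}\big|_{z=1}\mathcal{F}_2(z;q) = (-q;q)_\infty q^4/(1-q^4)$ and $\frac{\partial}{\partial z}\big|_{z=1}\mathcal{G}_2(z;q) = q^2(-q^3;q)_\infty/(1-q^2)$. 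Using the factorizations $(-q;q)_\infty = (1+q)(1+q^2)(-q^3;q)_\infty$ and $1-q^4 = (1-q^2)(1+q^2)$, elementary algebra simplifies the difference to
\begin{equation*}
    D(q) = \frac{q^5(-q^3;q)_\infty}{1-q^2} - q^2(-q^3;q)_\infty.
\end{equation*}
Since $(-q^3;q)_\infty = \sum_{n \geq 0} \rho(n,3) q^n$, reading off coefficients gives, for $n \geq 2$,
\begin{equation*}
    [q^n]D(q) = \sum_{\substack{k \geq 0 \\ 5+2k \leq n}} \rho(n-5-2k,3) \; - \; \rho(n-2,3),
\end{equation*}
so Theorem \ref{Beck} reduces to a linear inequality for $\rho(\cdot,3)$. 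The formula immediately yields $[q^2]D(q) = [q^6]D(q) = -1$ and $[q^n]D(q) = 0$ for $n \in \{0,1,3,4,5,7,8,9,10,11\}$, handling the exceptional values as well as the non-negativity claim for small $n$.

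For $n \geq 12$, the plan is to invoke Corollary \ref{rho(n,m) Katriel} with $m=3$. A convenient four-term truncation yields the sufficient condition
\begin{equation*}
    \rho(n-5,3) + \rho(n-7,3) + \rho(n-9,3) + \rho(n-11,3) \geq \rho(n-2,3), \qquad n \geq 11.
\end{equation*}
After the reindexing $n' := n-11$, this takes precisely the form required by Corollary \ref{rho(n,m) Katriel} with $r=1$, $s=4$, all weights equal to $1$ (hence $\sum\alpha_k = 1 < 4 = \sum\beta_\ell$), $\mu_1 = 9$, $\nu_\ell = 2(\ell-1)$, and $m=3$. The corollary then produces an explicit threshold $N^\star$ above which the truncated inequality, and hence the non-negativity of $[q^n]D(q)$, holds for all $n > N^\star + 11$. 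The remaining finitely many values $12 \leq n \leq N^\star + 11$ are verified by directly computing $\rho(m,3)$ for $m \leq N^\star + 9$ and checking $[q^n]D(q) \geq 0$ case by case.

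The main obstacle is the effective control of $N^\star$: as is typical of the Rademacher-type estimates of Proposition \ref{q(n) Rademacher Bounds}, the bound depends on auxiliary constants $A, B, C$ whose na\"ive choices can make $N^\star$ too large for practical verification. Following the approach used in the proof of Proposition \ref{lem_abinj} in Section \ref{sec_t3diff}, this is resolved by experimentally tuning $A, B, C$ subject to $1/A + 1/B + 1/C < 1$ until $N^\star$ falls in a manageable range. Because the parameters here are considerably simpler than in the $t=3$ case of Theorem \ref{diff} (fewer terms, all weights $1$, smaller shifts), the tuned $N^\star$ should be modest and the finite check straightforward in Mathematica or SageMath.
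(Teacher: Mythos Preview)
Your approach is correct but takes a genuinely different route from the paper. Both begin from the same generating functions $\partial_z|_{z=1}\mathcal F_2$ and $\partial_z|_{z=1}\mathcal G_2$, and your simplification $D(q)=\frac{q^5(-q^3;q)_\infty}{1-q^2}-q^2(-q^3;q)_\infty$ is equivalent to the paper's $H_2(q)$. From there the paper proceeds \emph{elementarily}: by successive telescoping of $q$-Pochhammer factors it rewrites
\[
H_2(q)+q^2+q^6=(1+q^4)\sum_{k\ge 7}q^k\sum_{j=5}^{k-2}q^j(-q^{j+1};q)_\infty,
\]
which is manifestly nonnegative and in fact yields combinatorial interpretations of the excess. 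No asymptotics, no finite computer check, and no appeal to Corollary~\ref{rho(n,m) Katriel} are needed.

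Your plan instead mirrors the $t=3$ strategy of Section~\ref{sec_t3diff}: truncate the positive sum to four terms, apply Corollary~\ref{rho(n,m) Katriel} with $m=3$, $L=\mu_1+3=12$, $\varepsilon=3$, and verify a finite range. This is sound (the four-term inequality does hold for the small $n$ one checks by hand, and the tuned $N^\star$ lands in the low thousands, well within the computational range you anticipate). The trade-off is that you import the Rademacher-type machinery of Section~\ref{sec_katriel} and a computer verification for a result that the paper obtains in a few lines of $q$-series algebra; on the other hand, your route is more mechanical and illustrates that the general inequality of Corollary~\ref{rho(n,m) Katriel} applies here as well.
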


After proving each theorem, we provide combinatorial interpretations of the respective excesses.

\subsection{Proof of Theorem~\ref{beck_l1}} 
  If $\lambda\in \mathcal O(n)$, then $$\ell_1(\lambda)= \begin{cases}1 & \text{ if } 1\in \lambda,\\ 0 & \text{ else.} \end{cases}$$ Thus,  $$\sum_{n\geq 0}\sum_{\lambda\in \mathcal O(n)}\ell_1(\lambda) q^ n = q\frac{1}{(q;q^2)_\infty}=q(-q;q)_\infty.$$

The number of gaps of size $1$ all in distinct partitions of $n$ is equal to the total number of parts in all distinct partitions of $n$ minus the total number of gaps of size at least $2$ in all distinct partitions of $n$, i.e., $b_1(n)-b_2(n)$.   Using this and the generating function for $b_2(n)$ found in \eqref{eqn_b2}, we have  $$\sum_{n\geq 0}\sum_{\lambda\in \mathcal D(n)}\ell_1(\lambda) q^n = (-q;q)_\infty \sum_{m\geq 1}\frac{q^m}{1+q^m}-\frac{q^2}{1-q}(-q^2;q)_\infty.$$ 

Therefore,
\begin{align}\notag
    \sum_{n\ge 0}&\left(\sum_{\lambda\in \mathcal D(n)}\ell_1(\lambda) - \sum_{\lambda\in \mathcal O(n)}\ell_1(\lambda)\right)q^n\\&=\notag (-q;q)_\infty \sum_{m\geq 1}\frac{q^m}{1+q^m}  -\frac{q^2}{1-q}(-q^2;q)_\infty- q(-q;q)_\infty\\  &= (-q;q)_\infty\left(\sum_{m\geq 1}\frac{q^m}{1+q^m}-\frac{q^2}{1-q^2}-q \right)\label{eqn_ell1r1}.
\end{align}

We will show that the expression in \eqref{eqn_ell1r1}, when expanded as a $q$-series, has non-negative coefficients for $n\geq 5$.  We  rewrite \eqref{eqn_ell1r1} as\begin{align} \label{eqn_ell1rewrite}
 (-q;q)_\infty\left(\sum_{m\geq 1}\frac{q^{3m}}{1+q^m}-\frac{q^2}{1+q} \right),
\end{align} 
which can be seen by, for example, subtracting \eqref{eqn_ell1rewrite} from \eqref{eqn_ell1r1} and using the geometric series identity.

Next we write \begin{align}\notag
 (-q; q)_\infty &\left(\sum_{m\geq 1}\frac{q^{3m}}{1+q^m} -\frac{q^2}{1+q} \right)\\ \notag & =(-q;q)_\infty\left(\frac{q^3}{1+q}+\frac{q^6}{1+q^2}-\frac{q^2}{1+q} \right) + (-q;q)_\infty\sum_{m\geq 3}\frac{q^{3m}}{1+q^m}\\  & = (-q^3;q)_\infty\left(q^3+q^5+q^6+q^7-q^2-q^4\right) + (-q;q)_\infty\sum_{m\geq 3}\frac{q^{3m}}{1+q^m}. \label{triple} \end{align}

To finish the proof, we show that the only negative terms in $$(-q^3;q)_\infty \left(q^3+q^5+q^6+q^7-q^2-q^4\right),$$ when expanded as a $q$-series, are $-q^2$ and $-q^4$.

 By expanding the $q$-Pochhammer symbol, we obtain \begin{align*}q^2(-q^3;q)_\infty& =q^2+q^5(-q^4;q)_\infty +q^2\sum_{k\geq4}q^k(-q^{k+1};q)_\infty\\  & = q^2+q^5(-q^4;q)_\infty + q^3\sum_{k\geq 3}q^{k}(-q^{k+2};q)_\infty.\end{align*} 

Similarly,
\begin{align*}q^3(-q^3;q)_\infty &=q^3+q^3\sum_{k\geq 3}q^kq^{k+1}(-q^{k+2};q)_\infty+ q^3\sum_{k\geq 3}q^k(-q^{k+2};q)_\infty,\\ q^4 (-q^3;q)_\infty &=  q^4+q^7(-q^5;q)_\infty+q^8(-q^5;q)_\infty +q^{11}(-q^5;q)_\infty+q^6\sum_{k\geq 3}q^k(-q^{k+3};q)_\infty,\\ q^5(-q^3;q)_\infty&= q^5(-q^4;q)_\infty + q^8(-q^5;q)_\infty+ q^{12}(-q^5;q)_\infty, \\ q^6(-q^3;q)_\infty &= q^6+ q^6\sum_{k\geq 3}q^{2k+1}(1+q+q^{k+2})(-q^{k+3};q)_\infty+ q^6\sum_{k\geq 3}q^k(-q^{k+3};q)_\infty, \\ q^7(-q^3;q)_\infty &= q^7(-q^5;q)_\infty + q^{10}(-q^4;q)_\infty + q^{11}(-q^5;q)_\infty.
\end{align*}

 Then, \begin{align*} (-q^3;q)_\infty & \left(q^3+q^5+q^6+q^7-q^2-q^4\right)  =  -q^2+q^3-q^4+H_1(q),\end{align*} where \begin{align*} H_1(q) :=& \  q^6+q^{10}(-q^4;q)_\infty+q^{12}(-q^5;q)_\infty\\ & +q^3\sum_{k\geq 3}q^kq^{k+1}(-q^{k+2};q)_\infty+ q^6\sum_{k\geq 3}q^{2k+1}(1+q+q^{k+2})(-q^{k+3};q)_\infty.\end{align*} 

Clearly, $H_1(q)$  expanded as a $q$-series has non-negative coefficients. 
This completes the proof of Theorem \ref{beck_l1}.  \qed \medskip

 The proof of Theorem \ref{beck_l1} shows that 
\begin{align*}\sum_{n\geq 0}\left(\sum_{\lambda\in \mathcal D(n)}\ell_1(\lambda) - \sum_{\lambda\in \mathcal O(n)}\ell_1(\lambda)\right)q^n  =   -q^2+q^3-q^4+H_1(q) + (-q;q)_\infty \sum_{m\geq 3} \frac{q^{3m}}{1+q^m}.  
    \end{align*}
We now give  a combinatorial interpretation for $\displaystyle\sum_{\lambda\in \mathcal D(n)}\ell_1(\lambda) - \sum_{\lambda\in \mathcal O(n)}\ell_1(\lambda)$  when $n\geq 5$. 

First note that $$(-q;q)_\infty \sum_{m\geq 3} \frac{q^{3m}}{1+q^m}$$ is the generating function for the number of partitions of $n$ in which exactly one part greater than $2$ has multiplicity three and all other parts have multiplicity one.  Next, we interpret the summands in $H_1(q)$ as combinatorial generating functions:

\begin{itemize} 
\item $q^6$ is the generating function for the number of partitions of $n$ with exactly three parts equal to $2$ (and no other parts).\smallskip

\item $q^{10}(-q^4;q)_\infty$ is the generating function for the number of partitions $\lambda$ of $n$ with exactly three parts equal to $2$, all other parts distinct, and $1,3 \in \lambda$. \smallskip

\item $q^{12}(-q^5;q)_\infty$ is the generating function for the number of partitions $\lambda$ of $n$ with exactly three parts equal to $1$, all other parts distinct, and $2,3,4 \in \lambda$. \smallskip

\item $q^3\sum_{k\geq 3}q^kq^{k+1}(-q^{k+2};q)_\infty$ is the generating function for the number of partitions $\lambda$ of $n$ with $\ell(\lambda)\geq 5$, exactly three parts equal to $1$, all other parts distinct, $2\not \in \lambda$, and the two smallest parts not equal to $1$ differing by one. \smallskip

\item $q^6\sum_{k\geq 3}q^{2k+1}(1+q+q^{k+2})(-q^{k+3};q)_\infty$ equals $q^6\sum_{k\geq 3}q^kq^{k+1}(-q^{k+2};q)_\infty+ q^6\sum_{k\geq 3}q^kq^{k+2}(-q^{k+3};q)_\infty,$ which  is the generating function for the number of partitions $\lambda$ of $n$ with $\ell(\lambda)\geq 5$, exactly three parts equal to $2$, all other parts distinct, $1\not \in \lambda$, and the two smallest parts not equal to $2$ differing by one or two.  
\end{itemize}

Thus, for $n \geq 5$, $$\sum_{\lambda\in \mathcal D(n)}\ell_1(\lambda) - \sum_{\lambda\in \mathcal O(n)}\ell_1(\lambda)$$ equals the number of partitions $\lambda$ of $n$ with exactly one part repeated three times and all other parts distinct such that 
\begin{itemize}
\item[(i)] if the repeated part is $1$, then $\ell(\lambda)\geq 5$, the two smallest parts not equal to $1$ differ by one, and if $2\in \lambda$, then $4\in \lambda$. 

\item[(ii)] if the repeated part is $2$, then $\ell(\lambda)=3$ or $\ell(\lambda)\geq 5$, and  
\begin{itemize}
\item[$\bullet$] if $1\in \lambda$, then $3 \in \lambda$ 

\item[$\bullet$] if $1\not \in \lambda$, then the two smallest parts not equal to
$2$ differ by one or two. 
\end{itemize}
\end{itemize}

\begin{corollary}\label{Cor-l1} For $n\geq 5$, $$\sum_{\lambda\in \mathcal D(n)}\ell_1(\lambda) - \sum_{\lambda\in \mathcal O(n)}\ell_1(\lambda)\leq b_1(n)-a_1(n).$$
\end{corollary}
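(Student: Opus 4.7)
The plan is very short because the heavy lifting has already been done in the proof of Theorem~\ref{beck_l1} together with Corollary~\ref{cor_andt1}. First I would invoke Corollary~\ref{cor_andt1}, which says that $b_1(n) - a_1(n) = c(n)$, where $c(n)$ is the number of partitions of $n$ in which exactly one part is repeated three times and all other parts are distinct.

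Next, I would recall the combinatorial interpretation established immediately after the proof of Theorem~\ref{beck_l1}: for $n \geq 5$, the quantity $\sum_{\lambda \in \mathcal D(n)} \ell_1(\lambda) - \sum_{\lambda \in \mathcal O(n)} \ell_1(\lambda)$ counts precisely those partitions of $n$ in which exactly one part is repeated three times and all other parts are distinct, subject to the additional restrictions (i) and (ii) spelled out just above the statement of the corollary. These conditions (on the repeated part being $1$ or $2$, the length of $\lambda$, and the gap structure of the smaller parts) are genuine further constraints, so the set of partitions counted by the left-hand side is a subset of the set of partitions counted by $c(n)$.

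Therefore, for every $n \geq 5$,
\begin{equation*}
  \sum_{\lambda\in \mathcal D(n)}\ell_1(\lambda) - \sum_{\lambda\in \mathcal O(n)}\ell_1(\lambda) \;\leq\; c(n) \;=\; b_1(n) - a_1(n),
\end{equation*}
which is exactly the claim. The only subtlety — and the one place to be careful writing the proof up — is to make sure that the combinatorial interpretation extracted from the decomposition
\begin{equation*}
  -q^2+q^3-q^4+H_1(q) + (-q;q)_\infty \sum_{m\geq 3} \frac{q^{3m}}{1+q^m}
\end{equation*}
is genuinely read as counting partitions of the exact shape described in the definition of $c(n)$, so that the containment of sets is manifest. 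No additional estimates or identities are needed; the inequality is an immediate set-inclusion once the two combinatorial descriptions are put side by side.
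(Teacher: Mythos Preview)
Your proposal is correct and takes essentially the same approach as the paper: the paper's proof simply states that the result follows immediately from the combinatorial interpretation given just above the corollary together with Corollary~\ref{cor_andt1}, which is exactly the set-inclusion argument you outline.
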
\begin{proof}This follows immediately from the combinatorial interpretation above and Corollary \ref{cor_andt1}. \end{proof}

 \begin{remark}Corollary \ref{Cor-l1} can be rewritten as \begin{align}\label{cor_64} a_1(n) - \sum_{\lambda\in \mathcal O(n)}\ell_1(\lambda)\leq b_1(n)-\sum_{\lambda\in \mathcal D(n)}\ell_1(\lambda), \ \ \text{ for } n\geq 5.\end{align} In fact,   the inequality is true for all $n\geq 0$.  Combinatorially, \eqref{cor_64}  states that  the total number of different part sizes greater than $1$ in all odd partitions of $n$ is at most  the total number of gaps greater than $1$ in all distinct partitions of $n$.   It is interesting to compare this with  Corollary \ref{cor_andt1} which implies that the total number of parts in all distinct partitions of $n$ is at most the total number of different part sizes in all odd partitions of $n$. \end{remark}
\subsection{Proof of Theorem~\ref{Beck}}

From Section \ref{sec_t=3},  $$\sum_{\lambda\in \mathcal D(n)}\ell_2(\lambda)= (-q^3;q)_\infty \frac{q^2}{1-q^2} \text{\ \  and } \sum_{\lambda\in \mathcal O(n)}\ell_2(\lambda)=(-q;q)_\infty    \frac{q^4}{1-q^4}.$$
Thus, we need to show that,   the coefficients  of $q^n$, $n\neq 2,6$, in 
\begin{align*} H_2(q):=\sum_{n\geq 0} \left(\sum_{\lambda\in \mathcal O(n)}\ell_2(\lambda)-\sum_{\lambda\in \mathcal D(n)}\ell_2(\lambda)\right) q^n=(-q;q)_\infty    \frac{q^4}{1-q^4} - (-q^3;q)_\infty \frac{q^2}{1-q^2}\end{align*}  are non-negative.  A direct calculation shows that the coefficients of $q^2$ and $q^6$ in $H_2(q)$ are both equal to $-1$.

After some  $q$-series manipulations, we have that
\begin{align*}\label{b2}H_2(q)+q^2+q^6& =(-q^3;q)_\infty \frac{q^5}{1-q^2} -q^5(-q^4;q)_\infty - q^2(1+q^4)\sum_{m\geq 5}q^m(-q^{m+1};q)_\infty\\ & = (-q^4;q)_\infty \frac{q^7}{1-q} - q^2(1+q^4)\sum_{m\geq 5}q^m(-q^{m+1};q)_\infty\\ & = (-q^4;q)_\infty \sum_{k\geq 7} q^k-(1+q^4)\sum_{k\geq 7}q^k(-q^{k-1};q)_\infty\\ & =(1+q^4)\sum_{k\geq 7}q^k\left((-q^5;q)_\infty-(-q^{k-1};q)_\infty\right).  \end{align*}

Moreover, for $k\geq 7$, we have 
\begin{align*}
    (-q^5;q)_\infty - (-q^{k-1};q)_\infty &=  \sum_{j\geq 5}q^j (-q^{j+1};q)_\infty  -  \sum_{j\geq k-1}q^j (-q^{j+1};q)_\infty \\ &  
    = \sum_{j=5}^{k-2}q^j(-q^{j+1};q)_\infty.
\end{align*}
Thus,  \begin{equation}\label{H} H_2(q)+q^2+q^6=(1+q^4)\sum_{k\geq 7}q^k \sum_{j=5}^{k-2}q^j(-q^{j+1};q)_\infty,\end{equation} which clearly has non-negative coefficients and completes the proof. \qed \medskip
 
 We interpret  \eqref{H} as a combinatorial generating function as follows. 
 We  write $k\geq 7$ as $k=3d+r$, $0\leq r\leq 2$ and we view $q^k$ as generating $d$ parts equal to $3$ and one part equal to $r$. The sum $\sum_{j=5}^{k-2}q^j(-q^{j+1};q)_\infty$ generates non-empty distinct partitions with smallest part between $5$ and $k-2$ (inclusive).

Thus, for $n\neq 2, 6$, $$\sum_{\lambda\in \mathcal O(n)}\ell_2(\lambda)-\sum_{\lambda\in \mathcal D(n)}\ell_2(\lambda)$$
 equals the number of partitions $\lambda$ of $n$ satisfying all of the following conditions: 
\begin{enumerate}
\item[(i)] $m_\lambda(3)\geq 2$, and if $m_\lambda (3)= 2$, then $\lambda_{\ell(\lambda)}<3$,

\item[(ii)]   $3$ is the only repeated part,  and 
  $1$ and $2$ cannot both occur as parts in $\lambda$,

\item[(iii)] $\lambda$ has part greater than $4$ and the smallest  part $s$ greater than $4$ satisfies $5\leq s\leq \left(\sum_{\lambda_i\leq 3} \lambda_i\right)-2$.
\end{enumerate}

We also provide a second combinatorial interpretation for the excess of Theorem \ref{Beck}.  From \eqref{H}, we have  that 
\begin{align*}H_2(q)+q^2+q^6& = (1+q^4)\sum_{k\geq 7}q^k\sum_{j=6}^{k-1} q^{j-1}(-q^j;q)_\infty \\
&= (1+q^4)\sum_{k= 6}^\infty q^{k-1}(-q^k;q)_\infty \sum_{j=k+1}^\infty q^j \\
&=(1+q^4)\sum_{k=6}^\infty \frac{q^{2k}}{1-q}(-q^k;q)_\infty\\  & = \frac{q^2}{1-q}(1+q^4)\sum_{k=5}^\infty q^{2k}(-q^{k+1};q)_\infty.\end{align*} Thus, for $n\neq 2, 6$, $$\sum_{\lambda\in \mathcal O(n)}\ell_2(\lambda)-\sum_{\lambda\in \mathcal D(n)}\ell_2(\lambda)$$
also equals the number of partitions $\lambda$ of $n$ such that  $1$ occurs at least twice as a part of $\lambda$, there are no parts equal to $2$ or $3$, the smallest part greater than $4$ occurs twice, and all other parts occur once.
 
We end this section by noting that the combinatorial interpretations for the excess in Theorem \ref{beck_l1}, respectively \ref{Beck} are of a similar flavor to  Theorem \ref{AB}.

\section{Further conjectures} \label{Further Conjectures}

We conclude this paper with several remarks and conjectures. In particular, we explore possible generalizations of our main results to other values of $t$ and other families of partitions. We also remark briefly on an apparent congruence which appears in our data.

\subsection{A stronger conjecture}
Recall Conjecture \ref{Odd-Distinct Conjecture} (ii), which says that for all integers $t \geq 2$, 
$a_t(n) - b_t(n) \to \infty$ as $n \to \infty$.  Corollaries \ref{2HooksAsymp Corollary}
and \ref{3HooksAsymp Corollary} establish Conjecture \ref{Odd-Distinct Conjecture} (ii) for $t=2$ and $t=3.$
We also conjecture the following stronger asymptotic result, which is proved for $t=2$ and $t=3$ in Corollaries \ref{2HooksRatio} and \ref{3HooksRatio}.  
\begin{conjecture} \label{Strong Odd-Distinct Conjecture}
For every integer $t \geq 2$, there is a positive constant $\alpha_t > 1$ such that $a_t(n) \sim \alpha_t b_t(n)$ as $n \to \infty$.
\end{conjecture}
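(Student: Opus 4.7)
My plan is to extend the circle-method analysis of Section \ref{Asymptotic section} to arbitrary $t \geq 2$. The leading-order asymptotics obtained there for $a_2, b_2, a_3, b_3$ all share the shape $c \cdot \frac{3^{1/4}}{2\pi n^{1/4}} e^{\pi\sqrt{n/3}}$, with the constant $c$ determined by the Laurent coefficient at $q=1$ of a rational/Lambert factor. The conjecture $a_t(n) \sim \alpha_t b_t(n)$ with $\alpha_t > 1$ would therefore follow, for each $t$, from (a) a matching factorization of both generating functions through the exponentially-singular factor $(-q;q)_\infty$, and (b) a strict inequality between the leading Laurent coefficients of the remaining factors on the odd and distinct sides.

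The first step is to derive generating-function identities of the form
$$
  \sum_{n \geq 0} a_t(n) q^n = (-q;q)_\infty \cdot L_{o,t}(q),
  \qquad
  \sum_{n \geq 0} b_t(n) q^n = (-q;q)_\infty \cdot L_{d,t}(q),
$$
where $L_{o,t}, L_{d,t}$ are analytic on $|q|<1$ and exhibit only polynomial (or logarithmic) blow-up as $q \to 1$. I would adapt the combinatorial arguments of Sections \ref{sec_t3odd} and \ref{sec_t3dist}: a hook of length $t$ in an odd partition $\lambda$ is controlled by the multiplicity profile (such hooks appear among the last few repetitions of each part size, with corrections when $\lambda_i - \lambda_{i+1}$ falls below a $t$-dependent threshold), while a hook of length $t$ in a distinct partition is governed by the pattern of small gaps $\lambda_i - \lambda_{i+1} < t$, conveniently re-parametrised via the Sylvester triangle. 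Introducing the natural bivariate generating functions tracking these statistics and logarithmically differentiating at $z=1$ should produce the desired product form, albeit not a manifestly positive one.

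The second step is to apply Proposition \ref{WrightCircleMethod} with $\xi(q) = (-q;q)_\infty$, whose growth on the major and minor arcs is handled by Lemmas \ref{Xi Major Arc} and \ref{Xi Minor Arc}, and with $L(q)$ equal to $L_{o,t}(q)$ or $L_{d,t}(q)$. Verifying (H1) and (H3) reduces to establishing Laurent expansions
$$
  L_{o,t}(e^{-z}) = \frac{c_{o,t}}{z} + O(1),
  \qquad
  L_{d,t}(e^{-z}) = \frac{c_{d,t}}{z} + O(1)
$$
as $z \to 0$ in the major-arc cone, together with a polynomial bound on the minor arc; any Lambert-series contributions are handled by Lemmas \ref{Lambert Major Arc} and \ref{Lambert Minor Arc}. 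With $K = 1/\sqrt{2}$, $A = \pi^2/12$, $B = 1$, $\beta = 0$, Proposition \ref{WrightCircleMethod} then yields
$$
  a_t(n) \sim \frac{c_{o,t} \cdot 3^{1/4}}{2\pi n^{1/4}} e^{\pi\sqrt{n/3}},
  \qquad
  b_t(n) \sim \frac{c_{d,t} \cdot 3^{1/4}}{2\pi n^{1/4}} e^{\pi\sqrt{n/3}},
$$
and the conjecture follows with $\alpha_t = c_{o,t}/c_{d,t}$, provided $c_{o,t} > c_{d,t} > 0$.

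The main obstacle is twofold. First, closed-form generating functions for $a_t(n)$ and $b_t(n)$ at general $t$ are combinatorially intricate — the authors leave the existence of manifestly positive identities as an open problem for $t \geq 3$ — but for the asymptotic application one only needs to control the leading term as $z \to 0$, so an iterated or partially implicit form may well suffice. The second and deeper obstacle is proving the inequality $c_{o,t} > c_{d,t}$ for every $t \geq 2$. Since these Laurent coefficients admit a probabilistic interpretation as limiting mean densities of length-$t$ hooks in a Fristedt-type random model for large odd (respectively distinct) partitions, the approach I would attempt first is a direct computation of each density followed by a term-by-term comparison. Should this be intractable in closed form, one could instead seek to express $c_{o,t} - c_{d,t}$ as the leading Laurent coefficient of a manifestly nonnegative generating function arising from a combinatorial injection, which would simultaneously refine Conjecture \ref{Odd-Distinct Conjecture}.
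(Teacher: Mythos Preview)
The statement you are attempting is labeled a \emph{conjecture} in the paper and is not proved there; the paper establishes it only for $t=2$ and $t=3$ (Corollaries \ref{2HooksRatio} and \ref{3HooksRatio}), and explicitly leaves the general case open. So there is no proof in the paper to compare your attempt against.

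Your proposed strategy is exactly the natural extension of the paper's method for $t=2,3$: factor the generating function as $(-q;q)_\infty$ times a function with only polynomial blow-up at $q=1$, extract the leading Laurent coefficient, and feed everything into Proposition \ref{WrightCircleMethod}. That is the right template, and you have correctly identified the two genuine obstructions that keep this a conjecture rather than a theorem.

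What you should be clear about is that your write-up is a \emph{plan}, not a proof, and both obstacles you name are substantial. First, even producing $L_{o,t}$ and $L_{d,t}$ in a form where the pole structure at $q=1$ can be read off is nontrivial for general $t$: the combinatorial bookkeeping for length-$t$ hooks in odd partitions involves the joint distribution of multiplicities and gaps up to size $t-1$, and on the distinct side one must track gap patterns of all sizes below $t$ after removing the Sylvester triangle. The paper already remarks that for $t\geq 3$ it is open to find tractable generating functions, and the difficulty grows with $t$. Second, and more seriously, the inequality $c_{o,t} > c_{d,t}$ is the heart of the matter and you have no argument for it --- only two candidate approaches (Fristedt-model densities, or an injection). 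Neither of these is carried out, and the paper's own heuristic in Section \ref{Further Conjectures} suggests that the sign of $c_{o,t}-c_{d,t}$ is tied to a delicate redistribution of hook lengths governed by \eqref{Conjecture Eqn} and Corollary \ref{cor_andt1}, not to any obvious monotonicity. Until one of those two approaches is made to work, the conjecture remains open.
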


We now give a heuristic explanation for why one should think that Conjecture \ref{Strong Odd-Distinct Conjecture} is true. Due to Euler, we know that the number of partitions into odd parts equals the number of partitions into distinct parts. Thus, the total number of cells among the two families of partitions are also equal and, as noted in Section \ref{intro}, we have for every $n$ that 
\begin{align} \label{Conjecture Eqn}
    \sum_{t \geq 1} a_t(n) = \sum_{t \geq 1} b_t(n).
\end{align}
Corollary \ref{cor_andt1} states that $b_1(n) > a_1(n)$, and the main theorems of our paper establish that $a_t(n) > b_t(n)$ for $t=2,3$ and $n$ sufficiently large. To explain this phenomenon informally, let $\lambda \in \mathcal O(n)$ and $\mu \in \mathcal D(n)$ be random partitions of the same (sufficiently large) integer $n$. Theorem \ref{AB} implies that, on average, $\mu$ should have more hooks of length 1 than $\lambda$. In order to balance this fact with \eqref{Conjecture Eqn}, $\lambda$ must have more hooks of other lengths $t$. Since there are in general more ``small" hook lengths than ``large" hook lengths in a given partition, we should expect the ``small" hook lengths to be the main contribution to balancing \eqref{Conjecture Eqn}. Our results do not consider what happens if $t$ is allowed to vary with $n$, so we do not know whether a random odd partition or a random distinct partition should have more ``large" hook numbers. However, the data we obtain, in particular the fact that $N_t$ appears to grow with $t$, suggests that for $n \leq N_t$ it is usually true that $b_t(n) > a_t(n)$, contrary to the case for $n \gg 0$. This suggests that the hook numbers of partitions into distinct parts tend to be either equal to $1$ or  large relative to $n$, while partitions into odd parts tend to have hook numbers of more intermediate values. It would be interesting to understand this phenomenon in more detail. For example, given a positive constant $0 < \varepsilon < 1$, one could ask whether odd or distinct partitions have more hook numbers of size $t \geq n^\varepsilon$. Our heuristic suggests that for large enough $\varepsilon$ in this interval, hooks of length $t \geq n^\varepsilon$ should be more common in distinct partitions of $n$ than in odd partitions of $n$. It would also be interesting to understand this picture in terms of probability distribution functions. Such a possibility is discussed further in Section \ref{Future directions}.

\subsection{Hook bias conjectures for self-conjugate versus distinct odd parts partitions}

Since the number of self-conjugate partitions of $n$ equals the number of partitions of $n$ into distinct odd parts, one may study the possible bias in the number of hooks of fixed length in these sets of partitions. To this end, let $a_t^*(n)$ be the total number of hooks of length  $t$ in all self-conjugate partitions of $n$ and let $b_t^*(n)$ be the total number of hooks of length $t$ in all partitions of $n$ into distinct odd parts. We propose the following conjecture, which is the analog of Conjecture \ref{Odd-Distinct Conjecture}.

\begin{conjecture} \label{conj-sc-do}
For every integer $t \geq 2$, there is an integer $N_t^*$ such that for all $n > N_t$, we have $a^*_t(n) \geq b_t^*(n)$. Furthermore, we have $a_t^*(n) - b_t^*(n) \to \infty$ as $n \to \infty$.
\end{conjecture}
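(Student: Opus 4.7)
The plan is to mirror the proof architecture of the odd versus distinct case now applied to the pair (self-conjugate, distinct odd parts). First I would derive bivariate generating functions $F_t^{*}(z;q)$ and $G_t^{*}(z;q)$ tracking, respectively, self-conjugate partitions and distinct odd parts partitions by size and by number of hooks of length $t$, in analogy with $F_2(z;q)$ in \eqref{eqn_F2zq} and $G_2(z;q)$ in \eqref{eqn_G2zq}. Extracting $\sum a_t^{*}(n) q^n$ and $\sum b_t^{*}(n) q^n$ by logarithmic differentiation at $z=1$ would then give explicit $q$-series, from which one proceeds in two complementary directions: a direct manipulation giving manifest non-negativity for the smallest $t$, and an asymptotic analysis via Wright's circle method for all $t$.

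For distinct odd parts, a hook of length $t$ in a strictly decreasing odd-parts Young diagram is controlled by the gaps $\lambda_i - \lambda_{i+1}$ (in the spirit of the Sylvester-triangle removal used in Proposition \ref{gf_b2}), so $G_t^{*}(z;q)$ can be obtained by the same style of decomposition. For self-conjugate partitions the natural parameter is the Durfee square: each self-conjugate $\lambda$ is determined by its set of principal hooks (which is a distinct odd parts partition) together with the shape of the arm above the diagonal, and off-diagonal hooks of length $t$ come in conjugate pairs while diagonal hooks of length $t$ occur iff $t$ is odd and $t$ belongs to the principal-hook set. Packaging this symmetry into a bivariate generating function parameterized by Durfee square size should produce $F_t^{*}(z;q)$ in a form analogous to $F_2(z;q)$. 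For $t=2$ (where diagonal hooks cannot contribute), this should yield a difference whose non-negativity can be checked in closed form as in Section \ref{sec_t=2}; for $t=3$ one would likely need a decomposition like the bisection $A(q)+B(q)$ of Section \ref{sec_t3diff}, plus a linear inequality for the distinct-odd counting function.

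For the asymptotic half, I would apply the Ngo--Rhoades formulation of Wright's circle method (Proposition \ref{WrightCircleMethod}). The driving modular factor is now $(-q;q^2)_\infty$, which is an eta quotient, equal to $(q^2;q^2)_\infty^{\,2}/\bigl((q;q)_\infty (q^4;q^4)_\infty\bigr)$, and therefore satisfies an explicit transformation under $z\mapsto 4\pi^2/z$ analogous to the one used in Lemmas \ref{Xi Major Arc} and \ref{Xi Minor Arc} for $(-q;q)_\infty$. The remaining factors of $F_t^{*}(z;q)$ and $G_t^{*}(z;q)$ at $z=1$ will consist of rational functions and Lambert-type series, so their behavior on the major and minor arcs can be handled by the Laurent-expansion and triangle-inequality estimates developed in Section \ref{H24 II}. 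Comparing leading terms should yield constants $\alpha_t^{*} > 1$ and $\beta_t^{*}>0$ with $a_t^{*}(n)\sim \alpha_t^{*} b_t^{*}(n)$ and $a_t^{*}(n)-b_t^{*}(n)\sim \beta_t^{*} n^{-1/4} e^{\pi\sqrt{n/3}}$, which gives the asymptotic half of Conjecture \ref{conj-sc-do} for all $t\geq 2$.

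The main obstacles will be two-fold. First, capturing hooks of length $t$ in a self-conjugate partition is subtler than in the odd/distinct case, because the condition for a box to be the corner of a $t$-hook is not a purely local statement about adjacent parts once the Durfee square interacts with its two symmetric arms; producing a clean, manifestly bivariate generating function for $a_t^{*}(n)$ is where I expect the real work to lie, and a bijective or Durfee-square-recursive approach (perhaps via the Littlewood decomposition with $t=2$) seems most promising. Second, promoting the asymptotic inequality to an inequality for all $n > N_t^{*}$ will require an effective, Katriel-type linear inequality for the distinct-odd counting function $do(n)$. Since $(-q;q^2)_\infty$ is an eta quotient, $do(n)$ admits a Rademacher-type exact expansion, so one should be able to follow the template of Proposition \ref{q(n) Rademacher Bounds} and Theorem \ref{q(n) Katriel} essentially verbatim to establish such an inequality, and then close the gap for small $n$ by direct computation as in Section \ref{sec_t3diff}.
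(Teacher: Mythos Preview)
The paper does not prove this statement: Conjecture \ref{conj-sc-do} is posed as an open problem, supported only by numerical data and the heuristic analogy with Conjecture \ref{Odd-Distinct Conjecture}. There is therefore no ``paper's own proof'' to compare against; your proposal is a research plan for attacking an open conjecture, not a re-derivation of an existing argument.

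As a plan, it is reasonable in spirit but overreaches in exactly the place where the paper itself gets stuck. You propose to derive bivariate generating functions $F_t^{*}(z;q)$ and $G_t^{*}(z;q)$ for general $t$ and then run Wright's circle method. But the paper could not carry out the analogous step for odd versus distinct partitions beyond $t\in\{2,3\}$: as noted after Conjecture \ref{Odd-Distinct Conjecture}, the generating functions for $a_t(n)$ and $b_t(n)$ are already difficult to obtain for $t\geq 4$, and this is precisely why Conjecture \ref{Odd-Distinct Conjecture} remains open in general. The self-conjugate case is, as you acknowledge, harder still, since hooks of length $t$ straddling the Durfee diagonal are not determined by a local gap condition. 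So the step ``derive $F_t^{*}(z;q)$ in a form analogous to $F_2(z;q)$'' is the entire problem, and nothing in your outline indicates how to do it even for $t=2$ in the self-conjugate setting.

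Two smaller issues. First, the exponential scale is wrong: the driving factor $(-q;q^2)_\infty$ has main singularity $\exp(\pi^2/(24z))$, so the correct growth is $e^{\pi\sqrt{n/6}}$, not $e^{\pi\sqrt{n/3}}$. Second, the asymptotic half of your plan would at best give $a_t^{*}(n)-b_t^{*}(n)\to\infty$; it would not by itself produce an explicit $N_t^{*}$ unless you also develop an effective Rademacher-type expansion for the relevant coefficients and make every circle-method estimate effective, which is substantially more than what Proposition \ref{WrightCircleMethod} provides.
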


The following table gives conjectural values of the constants $N_t^*$ for $2 \leq t \leq 10$. \\

\begin{center}
\begin{tabular}{|c|c|c|c|c|c|c|c|c|c|} \hline
$t$ & 2 & 3 & 4 & 5 & 6 & 7 & 8 & 9 & 10 \\ \hline
$N_t^*$ & 10 & 8 & 22 & 12 & 30 & 20 & 38 & 32 & 54 \\ \hline
\end{tabular}

\vspace{0.05in}
{\small Table: Conjectural values for $N_t^*$.}
\end{center}

The explanation of this conjecture is, on heuristic grounds, the same as that for Conjecture \ref{Odd-Distinct Conjecture}. In particular, it appears that the hook numbers of partitions into distinct odd parts tend to be either very small or very large, whereas the hook numbers of self-conjugate partitions tend to take intermediate values. It would be equally natural to investigate such questions for other well-known partition identities. For example, one could consider the Rogers-Ramanujan identities or Glaisher's identity involving $k$-regular partitions and partitions with parts repeated at most $(k-1)$ times.

\subsection{Congruence conjectures}

We now consider $a_t^*(n)$, the number of hooks of length $t$ in all self-conjugate partitions of $n$, from an arithmetic point of view. By the symmetry of self-conjugate partitions, it is clear that $a_{2m}^*(n) \equiv 0 \pmod{2}$ for all $n$; this is because hook numbers on the main diagonal of a self-conjugate partition are necessarily (distinct) odd integers. While generating data in support of Conjecture \ref{conj-sc-do}, the authors discovered what appear to be nontrivial congruence relations for $a^*_t(n)$ extending the trivial observation above.

\begin{conjecture}
We have for all $n \geq 0$ and $m \geq 1$ that
\begin{align*}
   a_{2m}^*(n) \equiv 0 \pmod{2m}.
\end{align*}
\end{conjecture}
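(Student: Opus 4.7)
My plan is to combine the classical $2m$-core/$2m$-quotient decomposition with the symmetry that self-conjugacy imposes on the quotient. The starting point is a standard identity readily derived from the beta-number/abacus description of hooks: for any partition $\lambda$ and any positive integer $t$,
\begin{equation*}
 h_t(\lambda) \;=\; \sum_{r=0}^{t-1} h_1\bigl(\lambda^{(r)}\bigr),
\end{equation*}
where $(\lambda^{(0)},\ldots,\lambda^{(t-1)})$ is the $t$-quotient of $\lambda$, $h_t(\lambda)$ denotes the number of hooks of length $t$ in $\lambda$, and $h_1(\mu)$ is the number of corner cells (equivalently, distinct part sizes) of $\mu$. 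This follows because cells $(i,j)\in\lambda$ with $h(i,j)=t$ correspond to beads $\beta$ on the $t$-abacus for which $\beta-t$ is empty, and runner-by-runner these are exactly the corners of the components $\lambda^{(r)}$.

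Second, I would invoke the classical fact that, with a consistent choice of base charge, conjugation of partitions sends the $t$-quotient $(\lambda^{(0)},\ldots,\lambda^{(t-1)})$ to $((\lambda^{(t-1)})',\ldots,(\lambda^{(0)})')$ and sends the $t$-core to its conjugate. Specializing to $t=2m$ and to a self-conjugate $\lambda$, the $2m$-core $\mu$ is itself self-conjugate and $\lambda^{(r)}=(\lambda^{(2m-1-r)})'$ for each $r$. Since $h_1$ is conjugation-invariant, the components pair up to give
\begin{equation*}
 h_{2m}(\lambda) \;=\; 2\sum_{r=0}^{m-1} h_1\bigl(\lambda^{(r)}\bigr),
\end{equation*}
which recovers the factor of $2$ that the excerpt already obtains from the diagonal-symmetry argument. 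Moreover, the same bijection puts self-conjugate $\lambda\vdash n$ in bijection with pairs $\bigl(\mu,(\lambda^{(0)},\ldots,\lambda^{(m-1)})\bigr)$ in which $\mu$ is a self-conjugate $2m$-core and $\lambda^{(0)},\ldots,\lambda^{(m-1)}$ are arbitrary partitions, subject only to $|\mu|+4m\sum_{r=0}^{m-1}|\lambda^{(r)}|=n$ (the remaining components $\lambda^{(m)},\ldots,\lambda^{(2m-1)}$ being forced by conjugation).

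Combining these facts and swapping the order of summation,
\begin{equation*}
 a_{2m}^*(n) \;=\; 2\sum_{r=0}^{m-1}\;\sum_{\mu}\;\sum_{(\lambda^{(0)},\ldots,\lambda^{(m-1)})} h_1\bigl(\lambda^{(r)}\bigr).
\end{equation*}
For each fixed self-conjugate $2m$-core $\mu$, the inner double-sum is invariant under any permutation of the coordinates of the $m$-tuple, so permuting the coordinate $r$ with coordinate $0$ shows that each of the $m$ outer terms equals the common non-negative integer $\sum_{\mu}\sum_{\boldsymbol{\lambda}}h_1(\lambda^{(0)})$. Hence $a_{2m}^*(n)=2m\cdot S_m(n)$ with $S_m(n)\geq 0$ an integer, which proves the conjecture.

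The main obstacle, I expect, is purely bookkeeping: the identity $h_t(\lambda)=\sum_r h_1(\lambda^{(r)})$ and the conjugation transformation law for the $t$-quotient hold simultaneously only with a consistent choice of base charge (for instance, a multiple of $t$), and one must verify that under this convention the bijection correctly matches self-conjugate $\lambda$ with the claimed pairs. These ingredients are standard (see e.g.\ James--Kerber), but must be set up carefully. An attractive by-product of this approach is the explicit formula $a_{2m}^*(n)=2m\,S_m(n)$, which is a self-conjugate analog of the identity $\sum_{\lambda\vdash n}h_t(\lambda)=t\sum_{\lambda\vdash n}m_\lambda(t)$ for unrestricted partitions, and which may provide a combinatorial handle for proving Conjecture \ref{conj-sc-do}.
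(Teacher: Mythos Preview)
The paper does not prove this statement: it is stated as an open conjecture, supported only by computer verification for $1\le m\le 5$ and $0\le n\le 70$. There is therefore no argument in the paper against which to compare yours.

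Your argument, however, is essentially correct and does prove the conjecture. Both ingredients are classical. First, on the $t$-abacus a cell of hook length exactly $t$ corresponds to a bead $b$ with $b-t$ empty, i.e.\ a bead with a gap immediately below it on its runner; this is precisely a corner of the corresponding quotient component, giving $h_t(\lambda)=\sum_{r=0}^{t-1}h_1(\lambda^{(r)})$. Second, with the number of beads taken to be a multiple of $t$, conjugation sends runner $r$ to runner $t-1-r$ and conjugates each component (see e.g.\ Olsson, \emph{Combinatorics and Representations of Finite Groups}, or Garvan--Kim--Stanton, \emph{Cranks and $t$-cores}). For $t=2m$ the involution $r\mapsto 2m-1-r$ has no fixed runner, so self-conjugate $\lambda\vdash n$ correspond bijectively to pairs $(\mu;\lambda^{(0)},\ldots,\lambda^{(m-1)})$ with $\mu$ a self-conjugate $2m$-core and $|\mu|+4m\sum_{r<m}|\lambda^{(r)}|=n$, the remaining components being determined by $\lambda^{(2m-1-r)}=(\lambda^{(r)})'$. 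The $S_m$-symmetry of this parametrisation in the free coordinates $\lambda^{(0)},\ldots,\lambda^{(m-1)}$ then yields the factor $m$, and the pairing of runners yields the factor $2$, exactly as you say.

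The only point requiring care is the one you already flag: the hook identity is insensitive to the runner labelling, but the conjugation law $r\leftrightarrow t-1-r$ holds only once the bead count (equivalently the charge) is fixed to a multiple of $t$; this is routine to arrange. With that in place, your argument not only establishes the congruence but gives the refinement $a_{2m}^*(n)=2m\,S_m(n)$ with $S_m(n)\in\mathbb{Z}_{\ge 0}$ explicit, which goes beyond what the paper claims.
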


This has been verified by computer for $1 \leq m \leq 5$ and $0 \leq n \leq 70$. We note that if we denote by $a_t^{**}(n)$  the number of hooks of length divisible by $t$  in all self-conjugate partitions of $n$, then by using the sieve of Eratosthenes one may show that the conjecture above is equivalent to showing that $a_{2m}^{**}(n) \equiv 0 \pmod{2m}$ for all $m \geq 1$ and $n \geq 0$.

\subsection{Future directions} \label{Future directions}
There are a number of statistical results in the literature regarding how hook numbers distribute among unrestricted partitions. For example, see \cite{AyyerSinha, BringmannCraigMalesOno, GriffinOnoTsai, LangWanXu} for several different approaches to such distributions. Adopting the same philosophy as this paper, it would be natural to ask  how these statistics behave in families of partitions such as partitions into odd parts or partitions into distinct parts. For example, based on our work and \cite{AyyerSinha} it would be natural to ask whether the 2-cores and 3-cores of partitions into distinct parts might be larger on average than those of partitions into odd parts. It would also be natural to ask similar questions for partition statistics other than those statistics discussed in this paper; a few potentially interesting examples might be the largest/smallest part or rank and crank statistics.

\subsection*{Data availability}
Data sharing not applicable to this article as no datasets were generated or analysed during the current study.

\bibliographystyle{plain}
\bibliography{paper}
\ \\
\end{document}